\begin{document}
\setlength{\baselineskip}{16pt}

\parindent 0.5cm
\evensidemargin 0cm \oddsidemargin 0cm \topmargin 0cm \textheight 22cm \textwidth 16cm \footskip 2cm \headsep
0cm

\newtheorem{theorem}{Theorem}[section]
\newtheorem{lemma}{Lemma}[section]
\newtheorem{proposition}{Proposition}[section]
\newtheorem{definition}{Definition}[section]
\newtheorem{example}{Example}[section]
\newtheorem{corollary}{Corollary}[section]

\newtheorem{remark}{Remark}[section]

\numberwithin{equation}{section}

\def\p{\partial}
\def\I{\textit}
\def\R{\mathbb R}
\def\C{\mathbb C}
\def\u{\underline}
\def\l{\lambda}
\def\a{\alpha}
\def\O{\Omega}
\def\e{\epsilon}
\def\ls{\lambda^*}
\def\D{\displaystyle}
\def\wyx{ \frac{w(y,t)}{w(x,t)}}
\def\imp{\Rightarrow}
\def\tE{\tilde E}
\def\tX{\tilde X}
\def\tH{\tilde H}
\def\tu{\tilde u}
\def\d{\mathcal D}
\def\aa{\mathcal A}
\def\DH{\mathcal D(\tH)}
\def\bE{\bar E}
\def\bH{\bar H}
\def\M{\mathcal M}
\renewcommand{\labelenumi}{(\arabic{enumi})}

\def\disp{\displaystyle}
\def\undertex#1{$\underline{\hbox{#1}}$}
\def\card{\mathop{\hbox{card}}}
\def\sgn{\mathop{\hbox{sgn}}}
\def\exp{\mathop{\hbox{exp}}}
\def\OFP{(\Omega,{\cal F},\PP)}
\newcommand\JM{Mierczy\'nski}
\newcommand\RR{\ensuremath{\mathbb{R}}}
\newcommand\CC{\ensuremath{\mathbb{C}}}
\newcommand\QQ{\ensuremath{\mathbb{Q}}}
\newcommand\ZZ{\ensuremath{\mathbb{Z}}}
\newcommand\NN{\ensuremath{\mathbb{N}}}
\newcommand\PP{\ensuremath{\mathbb{P}}}
\newcommand\abs[1]{\ensuremath{\lvert#1\rvert}}

\newcommand\normf[1]{\ensuremath{\lVert#1\rVert_{f}}}
\newcommand\normfRb[1]{\ensuremath{\lVert#1\rVert_{f,R_b}}}
\newcommand\normfRbone[1]{\ensuremath{\lVert#1\rVert_{f, R_{b_1}}}}
\newcommand\normfRbtwo[1]{\ensuremath{\lVert#1\rVert_{f,R_{b_2}}}}
\newcommand\normtwo[1]{\ensuremath{\lVert#1\rVert_{2}}}
\newcommand\norminfty[1]{\ensuremath{\lVert#1\rVert_{\infty}}}
\newcommand{\ds}{\displaystyle}

\title{Spreading Speeds and Traveling Waves of Nonlocal Monostable Equations
 in Time and Space Periodic Habitats\thanks{Partially supported by NSF grant DMS--0907752}}

\author{Nar Rawal and Wenxian Shen\\
Department of Mathematics and Statistics\\
Auburn University\\
Auburn University, AL 36849\\
U.S.A. \\
\\
and\\
\\
Aijun Zhang\\
Department of Mathematics\\
University of Kansas\\
Lawrence, KS 66045\\
U.S.A.}

\date{}
\maketitle

\noindent {\bf Abstract.}
This paper is devoted to the investigation of spatial spreading speeds and traveling wave solutions of monostable
evolution equations with nonlocal dispersal in time and space periodic habitats. It has been shown in an earlier work by the first two authors of the current paper
that  such an equation has a unique time and space periodic positive stable solution $u^*(t,x)$. In this paper, we show that such an equation has a spatial spreading speed
$c^*(\xi)$ in the direction of any given unit vector $\xi$. A variational characterization of $c^*(\xi)$ is  given.
Under the assumption that the nonlocal dispersal operator associated to the linearization of the monostable equation at the trivial solution
$0$ has a principal eigenvalue, we also show that the monostable equation has a continuous periodic traveling wave solution connecting $u^*(\cdot,\cdot)$ and $0$ propagating in
any given direction of $\xi$ with speed $c>c^*(\xi)$.

\bigskip

\noindent {\bf Key words.} Nonlocal monostable equation, time and space periodic habitat, spatial spreading speed, traveling wave solution,
comparison principle, principal eigenvalue.
\bigskip

\noindent {\bf Mathematics subject classification.} 45C05, 45G10, 45M20, 47G20, 92D25.

\newpage


\section{Introduction}
\setcounter{equation}{0}

In 1937,  Fisher \cite{Fisher} and Kolmogorov, Petrowsky,
Piscunov \cite{KPP} independently studied the following reaction diffusion equation modeling the evolutionary take-over of a habitat by a fitter genotype,
\begin{equation}
\label{classical-fisher-eq}
 \frac{\p u}{\p t}=\frac{\p ^ 2u}{\p x^2}+u(1-u),\quad\quad x\in \RR.
\end{equation}
 Here $u$ is the frequency
of one of two forms of a gene. Fisher in \cite{Fisher} found traveling wave solutions $u(t,x)=\phi(x-ct)$,
$(\phi(-\infty)=1,\phi(\infty)=0)$ of all speeds $c\geq 2$ and showed that there are no such traveling wave
solutions of slower speed. He conjectured that the take-over occurs at the asymptotic speed $2$. This conjecture
was proved in \cite{KPP} by Kolmogorov, Petrowsky, and Piscunov, that is, they proved that for any nonnegative
solution $u(t,x)$ of \eqref{classical-fisher-eq}, if at time $t=0$, $u$ is $1$ near $-\infty$ and $0$ near
$\infty$, then $\lim_{t\to \infty}u(t,ct)$ is $0$ if $c>2$ and $1$ if $c<2$ (i.e. the population invades into
the region  with no initial population  with speed
$2$). The number $2$ is called the {\it spatial spreading speed} of \eqref{classical-fisher-eq} in  literature.

The results of Fisher \cite{Fisher} and Kolmogorov, Petrowsky,
Piscunov \cite{KPP}  for \eqref{classical-fisher-eq} have been extended by many people to quite general reaction diffusion equations of the form,
\begin{equation}
\label{general-fisher-eq}
u_t=\Delta u+u f(t,x,u),\quad x\in\RR^N,
\end{equation}
where $f(t,x,u)<0$ for $u\gg 1$,  $\p_u f(t,x,u)<0$ for $u\ge 0$, and $f(t,x,u)$ is of certain recurrent property in $t$ and $x$.
 For example, assume  that $f(t,x,u)$ is periodic in
$t$ with period $T$ and periodic in $x_i$  with period $p_i$ ($p_i>0$, $i=1,2,\cdots,N$) (i.e. $f(\cdot+T,\cdot,\cdot)=f(\cdot,\cdot+p_i{\bf e_i},\cdot)=f(\cdot,\cdot,\cdot)$,
${\bf e_i}=(\delta_{i1},\delta_{i2}, \cdots,\delta_{iN})$, $\delta_{ij}=1$ if $i=j$ and $0$ if $i\not =j$,
$i,j=1,2,\cdots,N$), and that
$u\equiv 0$ is a linearly unstable solution of \eqref{general-fisher-eq} with respect to periodic perturbations. Then it is known that
\eqref{general-fisher-eq} has a unique positive periodic solution $u^*(t,x)$ ($u^*(t+T,x)=u^*(t,x+p_i{\bf e_i})=u^*(t,x)$)  which is asymptotically stable with
respect to periodic perturbations   and
 it
has been proved
 that for every $\xi\in S^{N-1}:=\{x\in \RR^N\,|\, \|x\|=1\}$,
  there is a $c^*(\xi)\in\RR$ such that for every $c\geq c^*(\xi)$, there is a
 traveling wave solution connecting $u^*$ and $u\equiv 0$ and propagating in the direction of $\xi$
 with speed $c$, and there is no such traveling
 wave solution of slower speed in the direction of $\xi$.  Moreover, the minimal wave speed $c^*(\xi)$ is
 of some  important spreading  properties.  For spreading properties,
the reader is referred to  \cite{Wei1} for  homogeneous equations, to \cite{BeHaNadin}, \cite{Fre}, \cite{FrGa}, and \cite{Wei2}
for  periodic equations, and to \cite{BeHaNadin}, \cite{BeHaNa2} for general equations. About existence of traveling waves and characterization of their speeds,
the reader is referred to
\cite{BeHaNa1},  \cite{BeHaRo} for space periodic equations, and
to \cite{Nad}, \cite{NoRuXi}, \cite{NoXi1}, \cite{Wei2} for space-time periodic equations. The reader is referred to
 \cite{LiZh} and \cite{LiZh1} for spreading properties and existence of traveling waves in homogeneous and periodic systems, respectively, and
 to
\cite{HuSh},  \cite{NaRo}, \cite{She1}, \cite{She2} for the extensions of the above results to the cases that $f(t,x,u)$ is almost periodic in $t$ and periodic in $x$
and that $f(t,x,u)\equiv f(t,u)$ is recurrent in $t$.

Among others, equation \eqref{general-fisher-eq} is used to model the evolution of population density of a species with random
internal interaction or movement
among the organisms
(roughly,  the organisms move randomly between the adjacent spatial locations). The term $\Delta u$  in \eqref{general-fisher-eq}
characterizes the internal interaction or movement of the organisms  and is sometime referred to as {\it random dispersal}.
 In practice, the internal interaction or movement among the organisms in many biological systems
is not local. Evolution equations of the following form are widely used to model such systems,
\begin{equation}
\label{main-eq}
 \frac{\p u}{\p
 t}=\int_{\RR^N}k(y-x)u(t,y)dy-u(t,x)+u(t,x)f(t,x,u(t,x)),\quad
 x\in\RR^N,
\end{equation}
 where
 $k(\cdot)$ is a $C^1$
 convolution kernel supported on a ball centered at $0$ (i.e.
  $k(z)>0$ if $\|z\|<r_0$ and $k(z)=0$ if $\|z\|\geq r_0$ for some $r_0>0$, where
 $\|\cdot\|$ denotes the norm
 in $\RR^N$), and  $\int_{\RR^N}k(z)dz=1$ (see \cite{BaZh},  \cite{ChChRo}, \cite{Fif2}, \cite{GrHiHuMiVi}, \cite{HuMaMiVi},
 etc.). In \eqref{main-eq}, the term
 $\int_{\RR^N}k(y-x)u(t,y)dy-u(t,x)$ characterizes the internal interaction or movement of the organisms
 and is sometime referred to  as {\it nonlocal dispersal}.

Recently,  nonlocal dispersal equations of form \eqref{main-eq} have  been studied by many authors. See,
for example,  \cite{CoDaMa1},   \cite{KaLoSh},
 \cite{ShZh0}, \cite{ShZh2} for the study of   the existence, uniqueness,
and stability of  positive  stationary solutions of \eqref{main-eq} in the case that $f(t,x,u)\equiv f(x,u)$ is spatially periodic.
 See, for example, \cite{Cov1} and \cite{CoDu}
for the study of traveling waves  of \eqref{main-eq} in the case that $f(t,x,u)\equiv f(u)$
  is homogeneous,
  and  \cite{CoDaMa2} and
\cite{ShZh1}  for the study of traveling waves
 of \eqref{main-eq}  in the case that $f(t,x,u)\equiv f(x,u)$ is spatially periodic. See also \cite{HeShZh}, \cite{ShZh0}, and \cite{ShZh2}
 for the study of spreading properties of \eqref{main-eq} with $f(t,x,u)=f(x,u)$ being spatially periodic.
However, in contrast to  \eqref{general-fisher-eq}, the dynamics of \eqref{main-eq} with both time and space periodic dependence
or with  general time and/or space dependence
is much less understood.
The results on spatial spreading speeds and traveling wave solutions  established in \cite{LiZh1} and \cite{Wei2} for quite general
periodic monostable evolution equations cannot be applied to time and space periodic nonlocal monostable equations because of the lack of certain
compactness of the solution operators for such equations.

The objective of the current paper is to  explore the spatial spread and front propagation
dynamics of \eqref{main-eq} in the case that
  $f(t,x,u)$ is    periodic in
$t$  and $x$  and  satisfies proper  monostablility
 assumptions. More precisely,  let (H0) stands the following assumption.

 \medskip
 \noindent {\bf (H0)} {\it $f(t,x,u)$ is $C^1$ in $(t,x,u)\in \RR\times\RR^N\times \RR^+$, $f(t,x,u)=f(t,x,0)$ for $t\in\RR$, $x\in\RR^N$, and $u\le 0$,
  and  $f(\cdot+T,\cdot,\cdot)=f(\cdot,\cdot+p_i{\bf e_i},\cdot)=f(\cdot,\cdot,\cdot)$,
${\bf e_i}=(\delta_{i1},\delta_{i2}, \cdots,\delta_{iN})$, $\delta_{ij}=1$ if $i=j$ and $0$ if $i\not =j$,
$i,j=1,2,\cdots,N$.}
 \medskip

 Throughout the rest of this paper, we assume $f$ satisfies (H0).
 Let
\begin{equation}
\label{x-pp-space} \mathcal{X}_p=\{u\in C(\RR\times \RR^N,\RR)|u(\cdot+T,\cdot)=u(\cdot,\cdot+p_i{\bf e_i})=u(\cdot,\cdot),\quad i=1,\cdots,N\}
\end{equation}
with norm $\|u\|_{\mathcal{X}_p}=\sup_{(t,x)\in\RR\times \RR^N}|u(t,x)|$, and
\begin{equation}
\label{x-pp-positive-space} \mathcal{X}_p^+=\{u\in \mathcal{X}_p\,|\, u(t,x)\geq 0\quad \forall (t,x)\in\RR\times \RR^N\}.
\end{equation}
Let  $I$ be the identity map on $\mathcal{X}_p$, and $\mathcal{K}$, $a_0(\cdot,\cdot)I:\mathcal{X}_p\to \mathcal{X}_p$ be defined by
\begin{equation}
\label{k-delta-op} \big(\mathcal{K} u\big)(t,x)=\int_{\RR^N}k(y-x)u(t,y)dy,
\end{equation}
\begin{equation}
\label{a-op} (a_0(\cdot,\cdot)Iu)(t,x)=a_0(t,x)u(t,x),
\end{equation}
where $k(\cdot)$ is as in \eqref{main-eq} and  $a_0(t,x)=f(t,x,0)$.
Let $\sigma(-\p_t +\mathcal{K}-I+a_0(\cdot,\cdot)I)$ be the spectrum of $-\p_t+\mathcal{K}-I+a_0(\cdot,\cdot)I$ acting on $\mathcal{X}_p$.
 The monostablility assumptions are then stated as follows:

\medskip
\medskip
 \noindent{\bf (H1)} {\it
 $\frac{\p f(t,x,u)}{\p u}<0$ for $t\in\RR$,  $x\in\RR^N$ and $u\in\RR^+$ and
 $f(t,x,u)<0$ for $t\in\RR$,  $x\in\RR^N$ and $u\gg 1$.}

 \medskip
\noindent{\bf (H2)} {\it $u\equiv 0$ is linearly unstable in $X_p$, that is,  $\lambda_0(a_0)>0$,
where $\lambda_0(a_0):=\sup\{{\rm Re}\lambda\,|\, \lambda\in\sigma(-\p_t+\mathcal{K}-I+a_0(\cdot,\cdot)I)$.}

 \medskip

It is proved in \cite{RaSh} that   (H1) and (H2) imply  that  \eqref{main-eq} has exactly two time periodic
 solutions in $\mathcal{X}_p^+$,  $u= 0$ and $u= u^*(t,x)$, and $u= 0$ is linearly unstable and $u=u^*(t,x)$
  is
asymptotically stable with respect to positive perturbations  in $X^+_p$ (see \cite{RaSh} for details), where
\begin{equation}
\label{x-p-space}
X_p=\{u\in C(\RR^N,\RR)\,|\, u(\cdot+p {\bf e_i})=u(\cdot)\}
\end{equation}
with maximum norm
and
\begin{equation}
\label{x-p-positive-space}
X_p^+=\{u\in X_p\,|\, u(x)\ge 0\,\,\forall \,\, x\in\RR^N\}.
\end{equation}
Hence (H1) and (H2)  are called monostability assumptions.

In the current paper, we investigate the spreading feature and traveling wave solutions of \eqref{main-eq}.
Let
\begin{equation}
\label{x-space}
X=\{u\in C(\RR^N,\RR)\,|\, u\,\,\,\text{is uniformly continuous and bounded}\}
\end{equation}
with supremum norm and
\begin{equation}
\label{x-positive-space}
X^+=\{u\in X\,|\, u(x)\ge 0\quad \forall\,\, x\in\RR^N\}.
\end{equation}
By general semigroup theory, for any $u_0\in X$, \eqref{main-eq} has a unique solution $u(t,x;u_0)$ with $u(0,x;u_0)=u_0(x)$.
By comparison principle, if $u_0\in X^+$, then $u(t,\cdot;u_0)$ exists for all $t\ge 0$ and
$u(t,\cdot;u_0)\in X^+$ (see  Proposition \ref{comparison-nonlinear-prop} for details).

 For given $\xi\in S^{N-1}$ and $\mu\in\RR$, let $\lambda_0(\xi,\mu,a_0)$ be the principal
spectrum point of the eigenvalue problem
\begin{equation}
\label{eigenvalue-eq0}
\begin{cases}
-u_t+\int_{\RR^N}e^{-\mu(y-x)\cdot\xi}k(y-x)u(t,y)dy-u(t,x)+a_0(t,x)u(t,x)=\lambda u(t,x)\cr
u(\cdot,\cdot)\in\mathcal{X}_p
\end{cases}
\end{equation}
(see Definition \ref{principal-spectrum-point-def}  for details).
Let $X^+(\xi)$ be defined by
\begin{equation}
\label{X-space-in-direction-xi}
X^+(\xi)=\{u\in X^+\,|\, \inf_{x\cdot\xi\ll -1}u(x)>0,\,\, \sup_{x\cdot\xi\gg 1}u(x)=0\}.
\end{equation}
Roughly, a real number $c^*(\xi)\in\RR$ is called the {\it spreading speed} of \eqref{main-eq} in the direction of $\xi\in S^{N-1}$ if for
any $u_0\in X^+(\xi)$,
\begin{equation*}
\limsup_{t\to\infty}\sup_{x\cdot\xi \leq ct}|u(t,x;u_0)-u^*(t,x)|= 0 \quad \forall\,\, c<c^*(\xi)
\end{equation*}
and
\begin{equation*}
  \limsup_{t\to\infty}\sup_{x\cdot\xi \geq ct}u(t,x;u_0)=0\quad \forall \,\, c>c^*(\xi)
\end{equation*}
(see Definition \ref{spreading-speed-interval-def} for details).
Among others, we prove

\medskip

\noindent $\bullet$ $c^*(\xi):=\inf_{\mu>0}\frac{\lambda_0(\xi,\mu,a_0)}{\mu}$ is the spreading speed of \eqref{main-eq}
in the direction of $\xi$ (see Theorem \ref{spreading-thm1} for details). Moreover, the spreading speed $c^*(\xi)$ is of some
important spreading features (see Theorem \ref{spreading-thm2} for details).

\medskip

\noindent $\bullet$ If $\lambda_0(\xi,\mu,a_0)$ is the principal eigenvalue of \eqref{eigenvalue-eq0} for all $\mu>0$
(see Definition \ref{principal-spectrum-point-def} for the definition of principal eigenvalue), then for any $c>c^*(\xi)$,
\eqref{main-eq} has a  continuous (periodic) traveling wave solution $u(t,x)=\Phi(x-ct,t,ct)$ connecting $u^*$ and $0$ in the direction of $\xi$
(i.e. $\Phi(x,t,z)$ is continuous in $x,t$, and $z$, is  periodic in $t$ and $z$, and $\Phi(x,t,z)-u^*(t,x+z)\to 0$ as $x\cdot\xi\to -\infty$ and
$\Phi(x,t,z)\to 0$ as $x\cdot\xi\to \infty$) (see Theorem   \ref{existence-thm} for details).

\medskip

To prove these results, we first establish
some  new results on the principal eigenvalue of nonlocal dispersal operators with time periodic dependence, and among those, we prove

\medskip

\noindent $\bullet$ If $\lambda_0(\xi,\mu,a_0)$ is the principal eigenvalue of \eqref{eigenvalue-eq0}   for all $\mu>0$, then
$\lambda_0(\xi,\mu,a_0)$ is algebraically simple,  and
$\lambda_0(\xi,\mu,a_0)$ and $\phi(\cdot,\cdot;\xi,\mu)$ are smooth in  $\mu$, where $\phi(\cdot,\cdot;\xi,\mu)$ is the positive eigenfunction of
\eqref{eigenvalue-eq0} associated to $\lambda_0(\xi,\mu,a_0)$ with $\|\phi(\cdot,\cdot;\xi,\mu)\|=1$.
 (see Theorem \ref{PE-simplicity-thm} for details).

\medskip

It should be pointed out that the above property of principal eigenvalue of nonlocal dispersal operators is of independent interest.
The first two authors of the current paper developed  in \cite{RaSh} some criteria for the existence of principal eigenvalue of nonlocal dispersal operators.
The principal
eigenvalue theory for nonlocal dispersal operators  established   in \cite{RaSh} and the new principal eigenvalue theory for nonlocal dispersal operators
developed  in the current paper
 will play an important role in the proofs of the existence of spreading speeds and traveling wave solutions of \eqref{main-eq}.

It should also be pointed out that the existence of spreading speeds of \eqref{main-eq} does not require the existence of principal eigenvalue of \eqref{eigenvalue-eq0}.
It follows from \cite[Theorem B]{RaSh} that if $a_0(t,x)$ is $C^N$ and $1\le N\le 2$, then $\lambda_0(\xi,\mu,a_0)$ is the principal  eigenvalue of \eqref{eigenvalue-eq0}   for all $\mu>0$.
Hence if $1\le N\le 2$ and $a_0(t,x)$ is $C^N$, then
for any $c>c^*(\xi)$,
\eqref{main-eq} has a  continuous (periodic) traveling wave solution $u(t,x)=\Phi(x-ct,t,ct)$ connecting $u^*$ and $0$ in the direction of $\xi$.
When $N\ge 3$, \eqref{eigenvalue-eq0} may not have a principal eigenvalue (see \cite{ShZh0} for an example).
If $\lambda_0(\xi,\mu,a_0)$ is not a principal eigenvalue of \eqref{eigenvalue-eq0}  for some $\mu>0$,
it remains open whether \eqref{main-eq} has a traveling wave solution connecting $u^*(\cdot,\cdot)$ and $0$ in the direction of $\xi$ with any speed
$c>c^*(\xi)$ (this remains open even when $f(t,x,u)\equiv f(x,u)$ is time independent but space periodic).

The results of the current paper extend the existence of spreading speed $c^*(\xi)$ and its spreading properties in \cite{ShZh0} and
the existence of traveling waves with  speed $c>c^*(\xi)$
in \cite{CoDaMa2} and
\cite{ShZh1} for  spatially periodic case to  both space and time periodic case. In the case
that $f(t,x,u)=f(x,u)$ is spatially periodic, the existence of traveling waves with speed $c=c^*(\xi)$ is also proved in \cite{CoDaMa2} and
the uniqueness and stability of traveling waves with speed $c>c^*(\xi)$ are proved in \cite{ShZh1}. The existence of traveling waves with speed $c=c^*(\xi)$ and
uniqueness and stability of traveling waves in the case that $f$ is both space and time periodic remain open.

 The rest of the paper is organized as follows. In section 2, we  present some comparison principle for nonlocal evolution equations. We establish some new
 principal eigenvalue theory for nonlocal dispersal operators with time dependence in section 3. Spatial spreading speeds and traveling wave solutions of
 \eqref{main-eq} are investigated in sections 4 and 5, respectively.

\medskip

\noindent {\bf Acknowledgment.} The authors thank the referees for the careful reading of the manuscript
and valuable comments and suggestions, which improved the presentation considerably.

\section{Comparison Principle for Nonlocal Dispersal Equations}

In this section, we present comparison principles for solutions of nonlocal dispersal equations.

First,  consider  the following nonlocal linear evolution equation,
\begin{equation}
\label{linear-new-eq1} \frac{\p u}{\p t}=\int_{\RR^N}e^{-\mu
(y-x)\cdot\xi}k(y-x)u(t,y)dy-u(t,x)+a(t,x)u(t,x),\quad x\in\RR^N
\end{equation}
where $\mu\in\RR$, $\xi\in S^{N-1}$, and $a(t,\cdot)\in X_p$ and $a(t+T,x)=a(t,x)$. Note that if $\mu=0$ and $a(t,x)=a_0(t,x)(:=f(t,x,0))$, \eqref{linear-new-eq1}
is  the linearization of \eqref{main-eq} at $u\equiv 0$.

Throughout this section, we assume that $\xi\in S^{N-1}$ and $\mu\in\RR$ are fixed, unless otherwise specified.

Let $X_p$ and $X$ be as in \eqref{x-p-space} and \eqref{x-space}, respectively.
It follows from the general linear semigroup theory (see \cite{Hen} or \cite{Paz}) that  for every $u_0\in
X$, \eqref{linear-new-eq1} has a unique solution $u(t,\cdot;u_0,\xi,\mu,a) \in X$  with
$u(0,x;u_0,\xi,\mu,a)=u_0(x)$. Put
\begin{equation}
\label{phi-xi-mu-eq} \Phi(t;\xi,\mu,a)u_0=u(t,\cdot;u_0,\xi,\mu,a).
\end{equation}
Note  that if $u_0\in X_p$, then $\Phi(t;\xi,\mu,a)u_0\in X_p$ for $t\geq 0$.

Let $X^+_p$ and $X^+$ be as in \eqref{x-p-positive-space} and \eqref{x-positive-space}, respectively. Let
\begin{equation}
\label{x-p-interior-space} {\rm Int}(X_p^+)=\{v\in X_p|v(x)>0,x\in\RR^N\}.
\end{equation}
For $v_1,v_2\in X_p$, we define
$$
v_1\leq v_2\quad (v_1\geq v_2)\quad {\rm if }\quad v_2-v_1\in X_p^+\quad (v_1-v_2\in X_p^+),
$$
and
$$
v_1\ll v_2\quad (v_1\gg v_2)\quad {\rm if}\quad v_2-v_1\in {\rm Int}(X_p^+)\quad (v_1-v_2\in {\rm Int}(X_p^+)).
$$
For $u_1,u_2\in X$, we define
$$u_1\leq u_2\quad (u_1\geq u_2)\quad {\rm if}\quad u_2-u_1\in X^+\quad (u_1-u_2\in X^+).
$$

A continuous function $u(t,x)$ on $[0,T)\times \RR^N$ is called a {\it super-solution} or {\it sub-solution} of
\eqref{linear-new-eq1} if $\frac{\p u}{\p t}$ exists and is continuous on $[0,T)\times\RR^N$ and satisfies
$$
\frac{\p u}{\p t}\geq \int_{\RR^N} e^{-\mu(y-x)\cdot\xi}k(y-x)u(t,y)dy-u(t,x)+a(t,x)u(t,x),\quad x\in\RR^N
$$
or
$$
\frac{\p u}{\p t}\leq \int_{\RR^N} e^{-\mu(y-x)\cdot\xi)}k(y-x)u(t,y)dy-u(t,x)+a(t,x)u(t,x),\quad x\in\RR^N
$$
for $t\in [0,T)$.

\begin{proposition}[Comparison principle for linear equations]
\label{comparison-linear-prop} $\quad$
\begin{itemize}
\item[(1)]
If $u_1(t,x)$ and $u_2(t,x)$ are sub-solution and super-solution of \eqref{linear-new-eq1} on $[0,T)$,
respectively, $u_1(0,\cdot)\leq u_2(0,\cdot)$,  and $u_2(t,x)-u_1(t,x)\geq -\beta_0$ for $(t,x)\in [0,T)\times
\RR^N$ and some $\beta_0>0$, then
$u_1(t,\cdot)\leq u_2(t,\cdot)\quad {\rm for}\quad t\in [0,T).$

\item[(2)] Suppose that $u_1,u_2\in X_p$ and $u_1\leq u_2$, $u_1\not =u_2$.
Then  $\Phi(t;\xi,\mu,a)u_1\ll
\Phi(t;\xi,\mu,a)u_2$ for all $t>0$.
\end{itemize}
\end{proposition}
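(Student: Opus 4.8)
The plan is to reduce both parts to an integral (variation‑of‑constants) reformulation in which the nonlocal term
\[
(\mathcal K_\mu u)(t,x):=\int_{\RR^N}e^{-\mu(y-x)\cdot\xi}k(y-x)\,u(t,y)\,dy
\]
appears with a definite sign, and then to exploit the two features of $k$ coming from $\mathrm{supp}\,k\subset\overline{B(0,r_0)}$ and $k>0$ on $B(0,r_0)$: (i) $\mathcal K_\mu$ is a bounded operator on $X$ (indeed $\|\mathcal K_\mu\|\le e^{|\mu|r_0}=:M$, and $\kappa_\mu(x):=(\mathcal K_\mu\mathbf 1)(x)=\int e^{-\mu(y-x)\cdot\xi}k(y-x)\,dy\le M$), and (ii) $\mathcal K_\mu$ strictly improves positivity within range $r_0$. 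Since the nonlocal operator lives on all of $\RR^N$ and the states are merely bounded, there is no classical maximum principle to appeal to; this is the \textbf{main obstacle}, and it forces the whole argument through the integral identity rather than through a minimizing point.

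\textbf{Part (1).} Set $w=u_2-u_1$; then $w$ is a super‑solution of \eqref{linear-new-eq1}, $w(0,\cdot)\ge0$, and $w\ge-\beta_0$. Writing $b(t,x)=1-a(t,x)$ (bounded, since $a$ is continuous and periodic), multiply $\p_t w\ge \mathcal K_\mu w-w+aw$ by the positive integrating factor $q(t,x)=\exp\!\big(\int_0^t b(s,x)\,ds\big)$ to get $\p_t(qw)\ge q\,\mathcal K_\mu w$, and integrate in $t$:
\[
q(t,x)w(t,x)\ \ge\ w(0,x)+\int_0^t q(s,x)\,(\mathcal K_\mu w)(s,x)\,ds\ \ge\ \int_0^t q(s,x)\,(\mathcal K_\mu w)(s,x)\,ds .
\]
Let $g(t)=\inf_{[0,t]\times\RR^N}w$ (nonincreasing, $g(0)\ge0$, $g(t)\ge-\beta_0$) and $h(t)=\min\{0,g(t)\}$. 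From $(\mathcal K_\mu w)(s,x)\ge\kappa_\mu(x)\inf_y w(s,y)\ge M\,h(s)$ and the uniform bounds $0<q_{\min}\le q\le q_{\max}$ on $[0,T)\times\RR^N$, the display yields $w(t,x)\ge C\int_0^t h(s)\,ds$ with $C=Mq_{\max}/q_{\min}>0$ independent of $(t,x)$; taking infima over $[0,t]\times\RR^N$ gives $g(t)\ge C\int_0^t h(s)\,ds$, hence $h(t)\ge C\int_0^t h(s)\,ds$. Since $h\le0$ and $h(0)=0$, the function $H(t)=\int_0^t h$ is absolutely continuous, $H\le0$, and $\tfrac{d}{dt}\big(e^{-Ct}H\big)=e^{-Ct}(h-CH)\ge0$ a.e., so $H(t)\ge H(0)=0$; thus $H\equiv0$, whence $h\equiv0$, i.e. $g(t)\ge0$ and $w\ge0$ on $[0,T)\times\RR^N$. (Equivalently one can compare $w$ with $-\e e^{\lambda t}$ and let $\e\to0$, choosing $\lambda>M+\|a\|_\infty$ so that $w+\e e^{\lambda t}$ is a strict super‑solution.) The one‑sided bound $w\ge-\beta_0$ is precisely what keeps $\mathcal K_\mu w$ bounded below and allows this Gronwall estimate to close.

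\textbf{Part (2).} Let $w(t,x)=(\Phi(t;\xi,\mu,a)u_2)(x)-(\Phi(t;\xi,\mu,a)u_1)(x)\in X_p$; by linearity $w$ solves \eqref{linear-new-eq1} with $w(0,\cdot)=u_2-u_1\ge0$, $w(0,\cdot)\not\equiv0$, and $w(t,\cdot)$ is bounded, so Part (1) gives $w\ge0$ for all $t\ge0$. With the same integrating factor, $q(t,x)w(t,x)=w(0,x)+\int_0^t q(s,x)(\mathcal K_\mu w)(s,x)\,ds$ with a nonnegative integrand. This identity supplies the two propagation mechanisms: (a) \emph{propagation in time at a fixed site}: $w(t_0,x_0)>0\Rightarrow w(t,x_0)>0$ for all $t\ge t_0$; and (b) \emph{spread in space}: if $w(s,\cdot)>0$ on an open set $U$ for every $s\in(t_1,t_2)$, then for every $x$ with $B(x,r_0)\cap U\neq\emptyset$ one has $(\mathcal K_\mu w)(s,x)>0$ on $(t_1,t_2)$ (positive integrand over a nonempty open set), hence $w(t_2,x)>0$. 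Now pick $x_0$ and $\rho>0$ with $w(0,\cdot)>0$ on $B(x_0,\rho)$; by (a), $w(t,\cdot)>0$ on $B(x_0,\rho)$ for all $t\ge0$, and iterating (b) — at the $n$‑th step taking $U=B(x_0,\rho+nr_0)$, $t_1=t/2$, $t_2=t$, which enlarges the radius by $r_0$ and uses strict positivity on the previous ball for \emph{all} intermediate times $s>0$ — shows $w(t,\cdot)>0$ on $B(x_0,\rho+nr_0)$ for every $n$ and every $t>0$. Since $\bigcup_n B(x_0,\rho+nr_0)=\RR^N$ and $w(t,\cdot)$ is periodic (hence its positive infimum over a period cell is attained), $w(t,\cdot)\in\mathrm{Int}(X_p^+)$ for all $t>0$, i.e. $\Phi(t;\xi,\mu,a)u_1\ll\Phi(t;\xi,\mu,a)u_2$. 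The delicate point here is the time bookkeeping in (b): at each stage one must have strict positivity on the previous ball for all $s>0$, not merely at a single instant.
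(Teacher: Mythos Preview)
Your argument is correct. The paper itself does not give a self-contained proof here; it simply refers to \cite[Propositions~2.1 and 2.2]{ShZh0}, so there is no detailed argument in the present paper to compare against. The approach you take---rewriting the difference $w=u_2-u_1$ via the integrating factor $q(t,x)=\exp\!\int_0^t(1-a(s,x))\,ds$, bounding $\mathcal K_\mu w$ from below using the lower bound $w\ge-\beta_0$, and closing a Gronwall inequality for $h(t)=\min\{0,\inf_{[0,t]\times\RR^N}w\}$---is exactly the standard route for comparison principles in nonlocal equations where no interior minimum is available, and is essentially the method of \cite{ShZh0}. Your propagation argument for Part~(2) (persistence in time at a point, then repeated enlargement of the positivity ball by $r_0$ using $k>0$ on $B(0,r_0)$, concluding via periodicity that the infimum is strictly positive) is likewise the standard strong positivity mechanism for such operators.

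Two minor remarks, neither of which affects correctness. First, in the iteration in Part~(2) your description says ``taking $U=B(x_0,\rho+nr_0)$'' at the $n$-th step, but the set $U$ on which you \emph{use} strict positivity should be the previous ball $B(x_0,\rho+(n-1)r_0)$; your parenthetical ``uses strict positivity on the previous ball'' shows you have the right picture, so this is only a notational slip. Second, $\kappa_\mu(x)=\int e^{-\mu z\cdot\xi}k(z)\,dz$ is actually independent of $x$, which slightly simplifies the bound $\kappa_\mu\,\inf_y w(s,y)\ge M\,h(s)$ but changes nothing in the logic.
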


\begin{proof}
(1) If follows from the arguments in \cite[Proposition 2.1]{ShZh0}.

(2) It follows from the arguments in \cite[Proposition 2.2]{ShZh0}.
\end{proof}

 For given $\rho\geq 0$, let
\begin{equation}
\label{x-mu-space} X(\rho)=\{u\in C(\RR^N,\RR)\,|\,\text{the function}\,\, x\mapsto e^{-\rho \|x\|}u(x)\,\,
\text{belongs to}\, \,X\}
\end{equation}
equipped with the norm $\|u\|_{X(\rho)}=\sup_{x\in\RR^N}e^{-\rho\|x\|}|u(x)|$.

\medskip

\begin{remark}
\label{weighted-space-rk}
  For every $u_0\in
X(\rho)$, \eqref{linear-new-eq1} has a unique solution $u(t,\cdot;u_0,\xi,\mu,a) \in X(\rho)$  with
$u(0,x;u_0,\xi,\mu,a)=u_0(x)$.
Moreover, for any $u_1,u_2\in X(\rho)$ with $u_1\le u_2$ and $u(t,x;u_2,\xi,\mu,a)- u(t,x;u_1,\xi,\mu,a)\ge -\beta$ for some
$\beta>0$ and any $(t,x)\in [0,\infty)\times\RR^N$, $u(t,x;u_1,\xi,\mu,a)\le u(t,x;u_2,\xi,\mu,a)$ for $t\ge 0$ and $x\in\RR^N$.
\end{remark}

\medskip

Next, consider \eqref{main-eq}.
By general nonlinear semigroup theory (see \cite{Hen} or \cite{Paz}), \eqref{main-eq} has a unique (local)
solution $u(t,x;u_0)$ with $u(0,x;u_0)=u_0(x)$ for every $u_0\in X$. Also if $u_0\in X_p$, then $u(t,x;u_0)\in
X_p$ for $t$ in the existence interval of the solution $u(t,x;u_0)$.

A continuous function $u(t,x)$ on $[0,T)\times \RR^N$ is called a {\it super-solution} or {\it sub-solution} of
\eqref{main-eq} if $\frac{\p u}{\p t}$ exists and is continuous on $[0,T)\times\RR^N$ and satisfies
$$
\frac{\p u}{\p t}\geq \int_{\RR^N} e^{-\mu(y-x)\cdot\xi}k(y-x)u(t,y)dy-u(t,x)+u(t,x)f(t,x,u(t,x)),\quad x\in\RR^N
$$
or
$$
\frac{\p u}{\p t}\leq \int_{\RR^N} e^{-\mu(y-x)\cdot\xi}k(y-x)u(t,y)dy-u(t,x)+u(t,x)f(t,x,u(t,x)),\quad x\in\RR^N
$$
for $t\in [0,T)$.

\begin{proposition}[Comparison principle for nonlinear equations]
\label{comparison-nonlinear-prop} $\quad$
\begin{itemize}
\item[(1)] If $u_1(t,x)$ and $u_2(t,x)$ are bounded sub- and super-solutions of \eqref{main-eq} on $[0,T)$,
respectively, and
$u_1(0,\cdot)\leq u_2(0,\cdot)$, then $u_1(t,\cdot)\leq u_2(t,\cdot)$ for $t\in[0,T)$.

\item[(2)] If $u_1,u_2\in X_p$ with $u_1\le u_2$ and $u_1\not = u_2$, then $u(t,\cdot;u_1)\ll u(t,\cdot;u_2)$ for every $t>0$ at which both $u(t,\cdot;u_1)$ and
$u(t,\cdot;u_2)$ exist.

\item[(3)] For every $u_0\in X^+$, $u(t,x;u_0)$ exists for all $t\geq 0$.
\end{itemize}
\end{proposition}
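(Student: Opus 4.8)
The plan is to derive all three parts from the linear comparison results of Proposition~\ref{comparison-linear-prop} by linearizing the nonlinearity $g(t,x,u):=uf(t,x,u)$ along the two competing functions. If $u_1\le u_2$ are continuous on $[0,T)\times\RR^N$ and $w:=u_2-u_1$, then, since $g$ is $C^1$,
\begin{equation*}
g(t,x,u_2)-g(t,x,u_1)=b(t,x)\,(u_2-u_1),\qquad b(t,x):=\int_0^1\p_u g\bigl(t,x,u_1+s(u_2-u_1)\bigr)\,ds,
\end{equation*}
and $b$ is continuous and bounded wherever $u_1,u_2$ stay bounded (using the $C^1$-dependence of $g$ and the periodicity of $f$ in $(t,x)$). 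Hence, whenever $u_1$ is a sub-solution and $u_2$ a super-solution of \eqref{main-eq}, subtracting the two differential inequalities shows that $w$ is a super-solution of the linear equation \eqref{linear-new-eq1} with $\mu=0$ and coefficient $a=b$, and the zero function is a sub-solution of that same linear equation. For part (1), $u_1,u_2$ are bounded, so $w\ge-\beta_0$ for some $\beta_0>0$, and $w(0,\cdot)\ge 0$; applying Proposition~\ref{comparison-linear-prop}(1) to the pair $(0,w)$ gives $0\le w$ on $[0,T)$, i.e. $u_1(t,\cdot)\le u_2(t,\cdot)$.

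For part (2): on the common existence interval the solutions $u(t,\cdot;u_i)$ are continuous and, since $u_1,u_2\in X_p$, they lie in $X_p$, hence are bounded on each compact subinterval; so $b(t,\cdot)\in X_p$ there. Moreover $w(t,\cdot):=u(t,\cdot;u_2)-u(t,\cdot;u_1)$ now satisfies \eqref{linear-new-eq1} (with $\mu=0$, $a=b$) exactly, with initial value $u_2-u_1$, so by uniqueness $w(t,\cdot)=\Phi(t;\xi,0,b)(u_2-u_1)$. Since $u_2-u_1\ge 0$ and $u_2-u_1\not\equiv 0$, Proposition~\ref{comparison-linear-prop}(2) (applied to the pair $(0,\,u_2-u_1)$) yields $w(t,\cdot)\gg 0$, i.e. $u(t,\cdot;u_1)\ll u(t,\cdot;u_2)$, for every $t>0$ in the interval.

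For part (3): fix $u_0\in X^+$. By (H1) there is a constant $M_0>0$ with $f(t,x,u)\le 0$ for all $t\in\RR$, $x\in\RR^N$ and all $u\ge M_0$. Set $\bar u:=\max\{M_0,\ \sup_{x\in\RR^N}|u_0(x)|\}$, regarded as a constant function; since $\int_{\RR^N}k(y-x)\,dy=1$,
\begin{equation*}
\p_t\bar u=0\ \ge\ \bar u\,f(t,x,\bar u)=\int_{\RR^N}k(y-x)\bar u\,dy-\bar u+\bar u\,f(t,x,\bar u),
\end{equation*}
so $\bar u$ is a bounded super-solution of \eqref{main-eq}, while $u\equiv 0$ is a sub-solution. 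As $0\le u_0\le\bar u$, part (1) gives $0\le u(t,\cdot;u_0)\le\bar u$ on the maximal existence interval. This is a uniform a priori bound on $\|u(t,\cdot;u_0)\|_{X}$, so the continuation property of the semilinear equation \eqref{main-eq} (a solution extends as long as its $X$-norm stays bounded, by the semigroup theory already invoked in this section) forces the maximal interval to be $[0,\infty)$.

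The one point that needs a word of care is in part (2): Proposition~\ref{comparison-linear-prop}(2) is phrased for coefficients $a$ that are $T$-periodic in $t$, whereas the linearized coefficient $b$ built from the solutions is in general only $p_i$-periodic in $x$. This is harmless: the strong positivity of $\Phi(t;\xi,\mu,a)$ rests only on the strict positivity of the iterated convolution kernels $k^{*n}$, whose supports exhaust $\RR^N$ as $n\to\infty$, via the Duhamel representation of the solution, and uses no periodicity in $t$; equivalently one may apply the nonlocal strong maximum principle to $w$ directly. Everything else is a routine use of the linear comparison principle and of the standard local existence, uniqueness, and continuation theory for \eqref{main-eq}.
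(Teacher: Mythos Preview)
Your argument is correct and follows the standard route: linearize the difference $w=u_2-u_1$ and reduce to the linear comparison principle of Proposition~\ref{comparison-linear-prop}. The paper itself gives no details here, simply citing \cite[Propositions~2.1 and 2.2]{ShZh0}, whose arguments are precisely of this type; so your write-up is essentially an explicit version of what the paper invokes by reference.

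One small remark: the non-periodicity issue you flag for part~(2) applies equally to part~(1), since the sub- and super-solutions there are merely bounded and continuous, with no periodicity assumed in either variable, so the linearized coefficient $b$ is only bounded and continuous. As you correctly observe, the comparison and strong-positivity arguments underlying Proposition~\ref{comparison-linear-prop} (iterated positive convolution plus a Duhamel/exponential-shift trick) use only boundedness of the zeroth-order coefficient, not its periodicity, so this is harmless in both places. Also note that your use of (H1) in part~(3) is indeed necessary for the a~priori bound; the paper's Section~2 formally assumes only (H0), but global existence of nonnegative solutions genuinely requires the sublinearity at infinity from (H1), which the paper tacitly uses here and makes explicit only from Section~4 onward.
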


\begin{proof}
If follows from the arguments in \cite[Proposition 2.1]{ShZh0}
and the arguments in \cite[Proposition 2.2]{ShZh0}.
\end{proof}

\begin{remark}
\label{general-space-rk}
Let
$$
\tilde X=\{u:\RR^N\to \RR\,|\, u\,\, \text{is Lebesgue measurable and bounded}\}
$$
equipped with the norm $\|u\|=\sup_{x\in\RR^N}|u(x)|$, and
$$
\tilde X^+=\{u\in\tilde X\,|\, u(x)\ge 0\,\, \forall\,\, x\in\RR^N\}.
$$
 By general semigroup theory, for any $u_0\in X$, \eqref{main-eq} has also
a unique (local) solution $u(t,\cdot;u_0)\in \tilde X$ with $u(0,x;u_0)=u_0(x)$. Similarly, we can define measurable sub- and super-solutions of \eqref{main-eq}.
Proposition \ref{comparison-nonlinear-prop} (1) and (3) also hold for bounded measurable sub-, super-solutions and solutions.
\end{remark}

Observe that $\tilde X$ is different from $L^\infty(\RR^N)$. For given $u,v\in\tilde X$, $u=v$ in $\tilde X$ indicates that $u(x)=v(x)$ for all
$x\in\RR^N$, while  $u=v$ in $L^\infty(\RR^N)$ indicates that $u(x)=v(x)$ for a. e. $x\in\RR^N$.

 \section{Principal Spectrum Points and Principal Eigenvalues of Nonlocal Dispersal Operators}

In this section, we present some principal spectrum point and principal eigenvalue theory for
time periodic nonlocal dispersal operators. Throughout this section, $r(A)$ denotes the spectral radius of an operator
$A$ on some Banach space.

Let $\mathcal{X}_p$ be as in \eqref{x-pp-space}.
Consider the following eigenvalue problem
\vspace{-.05in}\begin{equation}
\label{eigenvalue-eq}-v_t+ \big( \mathcal{K}_{\xi,\mu} -I +a(\cdot,\cdot) I
\big)v=\lambda v,\quad v\in \mathcal{X}_p,
\vspace{-.05in}\end{equation}
where $\xi\in S^{N-1}$, $\mu\in\RR$, and $a(\cdot,\cdot)\in \mathcal{X}_p$. The operator  $a(\cdot,\cdot)I$  has the same
 meaning
  as in \eqref{a-op} with $a_0(\cdot,\cdot)$ being replaced
 by $a(\cdot,\cdot)$,
 and $\mathcal{K}_{\xi,\mu}:\mathcal{X}_p\to\mathcal{X}_p$ is defined by
\vspace{-.05in}\begin{equation}
\label{k-delta-xi-mu-op}
(\mathcal{K}_{\xi,\mu}v)(t,x)=\int_{\RR^N}e^{-\mu(y-x)\cdot\xi}k(y-x)v(t,y)dy.
\vspace{-.05in}\end{equation}
We point out the following relation between \eqref{main-eq} and
\eqref{eigenvalue-eq}: if $u(t,x)=e^{-\mu(
x\cdot\xi-\frac{\lambda}{\mu}t)}\phi(t,x)$ with $\phi\in
\mathcal{X}_p\setminus\{0\}$ is a solution of
 the  linearization of \eqref{main-eq} at $u= 0$,
\vspace{-.05in}\begin{equation}
\label{linearization-eq0} \frac{\p u}{\p t}=\int_{\RR^N}
k(y-x)u(t,y)dy-u(t,x)+a_0(t,x)u(t,x),\quad x\in\RR^N,
\vspace{-.05in}\end{equation}
where $a_0(t,x)=f(t,x,0)$,  then $\lambda$ is an eigenvalue of
\eqref{eigenvalue-eq}  with $a(t,\cdot)=a_0(t,\cdot)$ or
$-\p_t+\mathcal{K}_{\xi,\mu}-I+a_0(\cdot,\cdot)I$  and $v=\phi(t,x)$ is a
corresponding eigenfunction.

 Let
$\sigma(-\p_t+ \mathcal{K}_{\xi,\mu}- I+a(\cdot,\cdot)I)$ be the spectrum of
$-\p_t+ \mathcal{K}_{\xi,\mu}- I+a(\cdot,\cdot)I$ on $\mathcal{X}_p$. Let
\vspace{-.05in}\begin{equation*}
\lambda_0(\xi,\mu,a):=\sup\{{\rm Re}\lambda\,|\,\lambda\in \sigma( -\p_t+\mathcal{K}_{\xi,\mu}- I+a(\cdot,\cdot)I)\}.
\vspace{-.05in}\end{equation*}
Observe that if $\mu=0$, \eqref{eigenvalue-eq} is independent of $\xi$
and hence we put
\begin{equation}
\label{lambda-delta-a}
\lambda_0(a):=\lambda_0(\xi,0,a)\quad \forall\,\, \xi\in
S^{N-1}.
\end{equation}

\begin{definition}
\label{principal-spectrum-point-def}
We call $\lambda_0(\xi,\mu,a)$  the {\rm  principal spectrum point} of $-\p_t+\mathcal{K}_{\xi,\mu}- I+a(\cdot,\cdot)I$.
$\lambda_0(\xi,\mu,a)$ is called the {\rm principal eigenvalue} of $
-\p_t+\mathcal{K}_{\xi,\mu}- I+a(\cdot,\cdot)I$ or $ -\p_t+\mathcal{K}_{\xi,\mu}- I+a(\cdot,\cdot)I$ is said to {\rm have
a principal eigenvalue} if  $\lambda_0(\xi,\mu,a)$ is an isolated
 eigenvalue of $-\p_t+\mathcal{K}_{\xi,\mu}-I+a(\cdot, \cdot)I$ with  finite algebraic multiplicity and a positive
  eigenfunction
 $v\in \mathcal{X}_p^+$,  and for every $\lambda\in
\sigma(-\p_t+ \mathcal{K}_{\xi,\mu}- I+a(\cdot)I) \setminus
\{\lambda_0(\xi,\mu,a)\}$, ${\rm
Re}\lambda\le \lambda_0(\xi,\mu,a)$.
\end{definition}

Observe that $-\p_t + \mathcal{K}_{\xi,\mu}- I+a(\cdot,\cdot)I$ may not have a principal eigenvalue  (see an example in
\cite{ShZh0}), which reveals some essential difference between random dispersal operators and nonlocal dispersal operators.
Let
$$
\hat a(x)=\frac{1}{T}\int_0^T a(t,x)dt.
$$

The following proposition provides necessary and sufficient condition for $-\p_t+\mathcal{K}_{\xi,\mu}-I+a(\cdot,\cdot)I$
to have a principal eigenvalue.

\begin{proposition}
\label{PE-necessary-sufficient-prop}
$\lambda_0(\xi,\mu,a)$ is the principal eigenvalue of $-\p_t+\mathcal{K}_{\xi,\mu}-I+a(\cdot,\cdot)I$ if and only if
$\lambda_0(\xi,\mu,a)>-1+\max_{x\in\RR^N}\hat a(x)$.
\end{proposition}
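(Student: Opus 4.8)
The plan is to reduce the statement to a spectral–radius problem for a single positive \emph{compact} operator on $\mathcal{X}_p$. Write $\mathcal{L}:=-\p_t+\mathcal{K}_{\xi,\mu}-I+a(\cdot,\cdot)I$ and $\mathcal{L}_0:=-\p_t-I+a(\cdot,\cdot)I$, and put $\lambda'':=-1+\max_{x}\hat{a}(x)$ (the maximum is attained, $\hat{a}$ being continuous and periodic). First I would record two elementary facts about $\mathcal{L}_0$. Solving, for each frozen $x$, the scalar $T$-periodic linear ODE in $t$ shows $\sigma(\mathcal{L}_0)=\overline{\{-1+\hat{a}(x)+\tfrac{2k\pi i}{T}:x\in\RR^N,\ k\in\ZZ\}}$, so the spectral bound of $\mathcal{L}_0$ equals $\lambda''$; and since $\mathcal{K}_{\xi,\mu}\geq 0$, the Duhamel expansion gives $e^{t\mathcal{L}}\geq e^{t\mathcal{L}_0}\geq 0$, whence $\lambda_0(\xi,\mu,a)\geq\lambda''$ \emph{always}. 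The whole issue is therefore the dichotomy $\lambda_0(\xi,\mu,a)=\lambda''$ versus $\lambda_0(\xi,\mu,a)>\lambda''$.

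For real $\lambda>\lambda''$ the resolvent $(\lambda-\mathcal{L}_0)^{-1}=\int_0^\infty e^{-\lambda t}e^{t\mathcal{L}_0}\,dt$ exists and is a strongly positive operator, and $\lambda-\mathcal{L}=(\lambda-\mathcal{L}_0)(I-L_\lambda)$ with $L_\lambda:=(\lambda-\mathcal{L}_0)^{-1}\mathcal{K}_{\xi,\mu}\geq 0$; thus $\lambda\in\sigma(\mathcal{L})\iff 1\in\sigma(L_\lambda)$, and the pointwise bound $|L_\lambda v|\leq L_{\mathrm{Re}\,\lambda}|v|$ reduces complex $\lambda$ with $\mathrm{Re}\,\lambda>\lambda''$ to the real case. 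The crucial point is that \emph{each $L_\lambda$ is compact}: $\mathcal{K}_{\xi,\mu}$ is a convolution in $x$ against the $C^1$, compactly supported kernel $z\mapsto e^{-\mu z\cdot\xi}k(z)$, hence maps $\mathcal{X}_p$ boundedly into functions that are $C^1$ in $x$; then $(\lambda-\mathcal{L}_0)^{-1}$, thanks to $a$ being $C^1$, preserves that $C^1_x$ bound and in addition yields a bound on the $t$-derivative (from $\p_t w=\mathcal{K}_{\xi,\mu}v-(\lambda+1-a)w$); since $\mathcal{X}_p$ is the space of continuous functions on the compact torus $(\RR/T\ZZ)\times\prod_i(\RR/p_i\ZZ)$, Arzel\`a--Ascoli finishes it. (This is precisely where the absence of compactness of the PDE solution operator on $X$, noted in the introduction, is circumvented: on the periodic space the spatial variable lives on a compact torus.) Next I would check that $\lambda\mapsto r(L_\lambda)$ on $(\lambda'',\infty)$ is continuous, nonincreasing, in fact strictly decreasing (from strong positivity of $L_\lambda$), and tends to $0$ as $\lambda\to\infty$ (since $\|(\lambda-\mathcal{L}_0)^{-1}\|\leq C/(\lambda-\lambda'')$). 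Consequently $\lambda_0(\xi,\mu,a)>\lambda''$ exactly when $r(L_\lambda)\geq 1$ for some $\lambda>\lambda''$, in which case there is a unique $\lambda^*>\lambda''$ with $r(L_{\lambda^*})=1$, it equals $\lambda_0(\xi,\mu,a)$, and $\mathrm{Re}\,\lambda<\lambda^*$ for every other $\lambda\in\sigma(\mathcal{L})$; otherwise $\lambda_0(\xi,\mu,a)=\lambda''$.

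The ``if'' direction then follows: if $\lambda_0:=\lambda_0(\xi,\mu,a)>\lambda''$, then $L_{\lambda_0}$ is compact and strongly positive with $r(L_{\lambda_0})=1$, so by Krein--Rutman $1$ is an isolated eigenvalue of finite algebraic multiplicity with a strictly positive eigenfunction $v$; hence $\mathcal{L}v=\lambda_0 v$, and transporting the local spectral picture back through the analytic-in-$\lambda$ factorization $\lambda-\mathcal{L}=(\lambda-\mathcal{L}_0)(I-L_\lambda)$ (analytic Fredholm theory) shows $\lambda_0$ is an isolated eigenvalue of $\mathcal{L}$ of finite algebraic multiplicity with $v\in\mathcal{X}_p^+$ and $\mathrm{Re}\,\lambda\leq\lambda_0$ on $\sigma(\mathcal{L})$ --- i.e. a principal eigenvalue. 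For ``only if'' I would argue by contraposition: suppose $\lambda_0(\xi,\mu,a)=\lambda''$ and is a principal eigenvalue, with eigenfunction $v$; by the strong positivity in Proposition~\ref{comparison-linear-prop}(2) (applied to the monodromy operator of \eqref{linear-new-eq1} with $a$ replaced by $a-\lambda_0$, of which $v(0,\cdot)$ is a fixed point) one gets $v\gg 0$, so $(\lambda''-\mathcal{L}_0)v=\mathcal{K}_{\xi,\mu}v$ is everywhere strictly positive. Fix $x_0$ with $\hat{a}(x_0)=\max\hat{a}=\lambda''+1$. Restricting to $x=x_0$, the scalar $T$-periodic operator $\p_t+(\lambda''+1-a(\cdot,x_0))$ has one-dimensional kernel, and its cokernel is spanned by the strictly positive $T$-periodic function $\psi_0(t):=\exp\bigl(\int_0^t(\max\hat{a}-a(s,x_0))\,ds\bigr)$; the Fredholm solvability condition then forces $\int_0^T(\mathcal{K}_{\xi,\mu}v)(t,x_0)\,\psi_0(t)\,dt=0$, impossible since the integrand is strictly positive --- a contradiction.

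The main obstacle, and where the argument earns its keep, is the compactness of $L_\lambda=(\lambda-\mathcal{L}_0)^{-1}\mathcal{K}_{\xi,\mu}$: one must exploit simultaneously the spatial smoothing of the convolution $\mathcal{K}_{\xi,\mu}$ (needing $k\in C^1$ with compact support), the $C^1$ regularity of $a$ so that inverting $\mathcal{L}_0$ does not destroy the $x$-regularity, the extra $t$-regularity produced by inverting $\mathcal{L}_0$, and the periodicity (compactness of the torus) for Arzel\`a--Ascoli; drop any one and $r_{\mathrm{ess}}(L_\lambda)$ need not vanish, so the Krein--Rutman step fails. The secondary delicate point is the ``only if'' direction, where it is the strong positivity of the eigenfunction --- hence of $\mathcal{K}_{\xi,\mu}v$ --- that converts the Fredholm obstruction at the maximizer $x_0$ into a genuine contradiction.
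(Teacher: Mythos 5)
Your strategy --- split $\mathcal{L}:=-\p_t+\mathcal{K}_{\xi,\mu}-I+aI$ as $\mathcal{L}_0+\mathcal{K}_{\xi,\mu}$ with $\mathcal{L}_0=-\p_t-I+aI$, identify the spectral bound of $\mathcal{L}_0$ as $-1+\max_x\hat a(x)$, convert membership of $\lambda$ in $\sigma(\mathcal{L})$ (for $\lambda$ to the right of that bound) into $1\in\sigma(L_\lambda)$ for a family of positive compact operators, then invoke Krein--Rutman for the ``if'' direction and a Fredholm obstruction at a maximizer $x_0$ of $\hat a$ (fed by strict positivity of the eigenfunction, hence of $\mathcal{K}_{\xi,\mu}v$) for the ``only if'' direction --- is sound, and as far as one can read off from the present paper it is exactly the mechanism behind the cited \cite[Theorem A]{RaSh}: in the proof of Theorem~\ref{PE-simplicity-thm} the paper works with $U_{\alpha,\xi,\mu}=\mathcal{K}_{\xi,\mu}(\alpha+\p_t+I-aI)^{-1}$, which is your operator with the two factors composed in the opposite order.

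That opposite order is not cosmetic, and it is where your proof has a genuine gap. You prove compactness of $L_\lambda=(\lambda-\mathcal{L}_0)^{-1}\mathcal{K}_{\xi,\mu}$ by sending $\mathcal{X}_p$ into $C^1_x$ via the convolution and then asserting that $(\lambda-\mathcal{L}_0)^{-1}$ preserves $C^1_x$ ``thanks to $a$ being $C^1$.'' But Proposition~\ref{PE-necessary-sufficient-prop} and the setup in \eqref{eigenvalue-eq} only posit $a\in\mathcal{X}_p$, i.e.\ continuous; the $C^N$ hypothesis occurs only in the separate sufficient condition of Proposition~\ref{PE-sufficient-prop}. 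For merely continuous $a$ the scalar resolvent $(\lambda-\mathcal{L}_0)^{-1}$, whose coefficient is $a(\cdot,x)$, does not preserve $C^1_x$, so your $L_\lambda$ need not be compact and the Krein--Rutman step collapses. The repair is precisely the paper's choice of composition: with $U_\lambda:=\mathcal{K}_{\xi,\mu}(\lambda-\mathcal{L}_0)^{-1}$ the inner factor solves $\p_t w+(\lambda+1-a)w=v$ and hence controls $w$ and $\p_t w$ by $\|v\|$ with no $x$-regularity of $a$, and only then does $\mathcal{K}_{\xi,\mu}$ supply the $C^1_x$ equicontinuity; Arzel\`a--Ascoli on the compact torus gives compactness, and since $U_\lambda$ and $L_\lambda$ share nonzero spectrum (and a positive eigenfunction of $U_\lambda$ transports to one of $L_\lambda$ through $(\lambda-\mathcal{L}_0)^{-1}$), every spectral conclusion you draw carries over unchanged. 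A secondary imprecision worth flagging: because $k$ is compactly supported, $L_\lambda$ (or $U_\lambda$) is not strongly positive in a single application; it is irreducible, with a suitable power strongly positive, which is what actually underwrites both the Krein--Rutman conclusion and the strict monotonicity of $\lambda\mapsto r(L_\lambda)$ that you use.
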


\begin{proof}
It follows from \cite[Theorem A]{RaSh}.
\end{proof}

The following proposition provides a very useful sufficient condition for
$\lambda_0(\xi,\mu,a)$ to be the principal eigenvalue of  $-\p_t+\mathcal{K}_{\xi,\mu}-I+a(\cdot,\cdot)I$.

\begin{proposition}
\label{PE-sufficient-prop}
 If $a(t,\cdot)$ is $C^N$ and  the partial
derivatives of $\hat a(x)$ up to order $N-1$ at some $x_0$ are zero {\rm (we refer this to as a vanishing condition)}, where $x_0$ is such that
$\hat a(x_0)=\max_{x\in\RR^N}\hat a(x)$, then $\lambda_0(\xi,\mu,a)$ is the principal eigenvalue of $-\p_t+\mathcal{K}_{\xi,\mu}-I+a(\cdot,\cdot)I$
for all $\xi\in S^{N-1}$ and $\mu\in\RR$.
\end{proposition}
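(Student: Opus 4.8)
The plan is to reduce the claim to Proposition~\ref{PE-necessary-sufficient-prop} by showing that the vanishing condition on $\hat a$ forces the strict inequality $\lambda_0(\xi,\mu,a)>-1+\max_{x\in\RR^N}\hat a(x)$. Write $a_{\max}:=\max_{x}\hat a(x)=\hat a(x_0)$, and abbreviate $\lambda_0=\lambda_0(\xi,\mu,a)$. Since the general inequality $\lambda_0\ge -1+a_{\max}$ always holds (this should follow from the same kind of test-function estimate used in \cite{RaSh}, using that $\hat a$ controls the time average of $a$ along periodic orbits), the only thing to rule out is equality. So I would argue by contradiction: suppose $\lambda_0 = -1+a_{\max}$.

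The main step is to construct, for each small $\e>0$, an explicit periodic test function $v_\e\in\mathcal{X}_p^+$ and show that the ``Rayleigh-type'' quotient associated to $-\p_t+\mathcal{K}_{\xi,\mu}-I+a(\cdot,\cdot)I$ evaluated at $v_\e$ is strictly larger than $-1+a_{\max}$, contradicting the variational characterization of $\lambda_0$ as a supremum of such quantities (or, dually, contradicting that $\lambda_0$ bounds the generalized principal eigenvalue from above). The natural choice is $v_\e(t,x)=\psi_\e(x)$ independent of $t$, where $\psi_\e$ is a smooth bump concentrating near $x_0$ at scale $\e$, say $\psi_\e(x)=\chi\bigl((x-x_0)/\e\bigr)$ suitably periodized. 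The point of the vanishing condition is this: because the partial derivatives of $\hat a$ up to order $N-1$ vanish at $x_0$, a Taylor expansion gives $\hat a(x) = a_{\max} - O(|x-x_0|^N)$ near $x_0$ (the degree-$N$ term has a sign, so actually $\hat a(x)\ge a_{\max}-C|x-x_0|^N$ is what we want, which holds since $x_0$ is a global max and the first nonvanishing Taylor contribution is of order $N$). Meanwhile the nonlocal term $\mathcal{K}_{\xi,\mu}$ applied to a bump of width $\e$ loses an amount of order $\e^2$ (the standard ``nonlocal diffusion'' loss, from the second moment of $k$, up to the $e^{-\mu(y-x)\cdot\xi}$ weight which only contributes lower-order corrections controlled by the fixed compact support of $k$). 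Choosing $N\ge 3$ one has $\e^N \ll \e^2$, but the competition actually goes the other way for the relevant estimate: the gain from concentrating where $\hat a$ is nearly maximal (error $\e^N$) must beat the diffusive loss. Let me restate: evaluating $\int_0^T\!\!\int_{\RR^N}\bigl[(\mathcal{K}_{\xi,\mu}v_\e - v_\e + a v_\e)\cdot v_\e\bigr]\big/\|v_\e\|^2$, the $a$-contribution is $\ge a_{\max} - C\e^N$ by the Taylor bound (after averaging in $t$ the time-dependent part integrates against the time-independent $v_\e^2$ to give $\hat a$), while the kernel contribution is $\ge -1 - C'\e^2$. If $N>2$ this gives a quantity $\ge -1+a_{\max} - C\e^N - C'\e^2$, which is not yet a contradiction; so the test function must instead be tuned at a scale where the positive ``spectral gap'' survives --- one should rescale so that the bump width is $\e$ but compare against $e^{-\mu x\cdot\xi}$-modulated profiles, extracting a genuinely positive term. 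The cleaner route, and the one I would actually pursue, is: since $N\ge 1$ and the derivatives up to order $N-1$ vanish, one in fact has enough flatness that a more careful choice (bump of width $\e$ together with a correction profile solving a localized problem) produces excess $+c\e^{2}$ versus loss $-C\e^{N}$ --- wait, this requires $N\ge 3$. Indeed the statement implicitly wants $N\ge 3$ for the vanishing condition to bite beyond Proposition~\ref{PE-necessary-sufficient-prop}'s automatic range; for $N\le 2$ the conclusion is already contained in the remark after the statement of the main results. So I would organize the estimate to show: with $N\ge 3$, flatness of order $N-1$ at the max gives $\hat a(x)\ge a_{\max} - C|x-x_0|^N$, and then a width-$\e$ test function yields $\lambda_0 \ge -1 + a_{\max} - C'\e^2$ is too weak, so instead use a \emph{second-order} corrector: set $v_\e(t,x) = \psi_\e(x)(1 + \e\,\phi_1(t,(x-x_0)/\e) + \dots)$ chosen so the $O(\e)$ and $O(\e^2)$ diffusive losses are cancelled by the corrector, leaving the $O(\e^N)$ flatness error as the only genuine deficit, hence $\lambda_0\ge -1+a_{\max}-C\e^N$ for every $\e$; but since separately $\lambda_0\le -1+a_{\max}$ was assumed, letting $\e\to 0$ only recovers the assumed equality and still no contradiction.

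Given the subtlety above, the approach I actually expect to work --- and which I believe is the intended one --- is not a soft variational argument but a direct appeal to the construction in \cite{RaSh} combined with the perturbation machinery the paper is about to develop (Theorem~\ref{PE-simplicity-thm}). Concretely: one first treats the frozen/averaged operator $-\p_t + \mathcal{K}_{\xi,\mu} - I + \hat a I$, or rather uses that having a principal eigenvalue is an \emph{open} condition and is stable under the vanishing hypothesis via the explicit sub-solution construction of \cite[Theorem A / Theorem B]{RaSh}. The real content of \cite[Theorem B]{RaSh} is exactly a construction of a strictly positive periodic sub-solution of the eigenvalue equation with eigenvalue-parameter $> -1+a_{\max}$ precisely when $\hat a$ is flat to order $N-1$ at its max; so the proof here should simply be: ``\emph{This is \cite[Theorem B]{RaSh}, whose hypothesis is verified by the vanishing condition; combine with Proposition~\ref{PE-necessary-sufficient-prop}.}'' I would therefore write the proof as a short reduction: (i) recall from \cite{RaSh} or prove directly that $\lambda_0(\xi,\mu,a)\ge -1+\max\hat a$ always; (ii) cite \cite[Theorem B]{RaSh} (or reproduce its sub-solution construction) to get the strict inequality under the vanishing condition; (iii) apply Proposition~\ref{PE-necessary-sufficient-prop} to conclude that $\lambda_0(\xi,\mu,a)$ is the principal eigenvalue, for every $\xi\in S^{N-1}$ and $\mu\in\RR$. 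The main obstacle, and the step where all the work hides, is (ii): constructing the positive periodic super/sub-solution with strict spectral excess --- this is where the Taylor expansion of $\hat a$ to order $N$ at $x_0$ and the second-moment expansion of the (weighted) convolution kernel $e^{-\mu(y-x)\cdot\xi}k(y-x)$ must be balanced on a suitable length scale. If \cite[Theorem B]{RaSh} already packages this, the present proof is a one-paragraph citation; if not, the balancing computation (choosing the concentration scale so the $N$-th order flatness dominates the quadratic diffusive loss, which forces $N\ge 3$ --- consistent with the paper's remark that $N\le 2$ is automatic) is the crux.
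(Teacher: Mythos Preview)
Your final landing point is exactly what the paper does: it simply writes ``It follows from the arguments of \cite[Theorem B (1)]{RaSh}.'' So the intended proof is indeed step (ii)+(iii) of your last paragraph --- invoke the sub-/super-solution construction of \cite{RaSh} to get the strict inequality, then apply Proposition~\ref{PE-necessary-sufficient-prop}. No new work is done here.

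Two remarks on the exploratory part. First, your scaling analysis of the direct test-function argument is off in a way that matters. The diffusive loss for a bump of width $\e$ is \emph{not} $O(\e^2)$ --- that is the local $\Delta$ scaling. For the nonlocal operator $\mathcal{K}_{\xi,\mu}-I$ with a fixed kernel of support radius $r_0$, a bump of width $\e\ll r_0$ gives $(\mathcal{K}_{\xi,\mu}\psi_\e)(x)$ of order $\e^N$ (the mass of the bump), not $(1-O(\e^2))\psi_\e(x)$. The actual balance one needs (and which \cite{RaSh} carries out) compares a kernel contribution of order $\e^N$ against a potential deficit of order $\e^N$, and the point is that the \emph{constant} in front of the kernel term can be made to win --- this is where the flatness to order $N-1$ enters, to make the coefficient of $\e^N$ in $a_{\max}-\hat a$ small relative to the fixed positive constant coming from $k$. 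Your attempt to introduce second-order correctors to cancel an $\e^2$ loss is chasing the wrong enemy.

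Second, your reading of the $N\le 2$ situation is slightly garbled. It is not that the conclusion is automatic from some other proposition; rather, the \emph{hypothesis} (vanishing of derivatives up to order $N-1$ at a max) is automatically satisfied when $N\le 2$, so Proposition~\ref{PE-sufficient-prop} applies unconditionally in those dimensions. The proof via \cite{RaSh} is still the proof; nothing special happens for small $N$ beyond the hypothesis being free.
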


\begin{proof}
It follows from the arguments of \cite[Theorem B (1)]{RaSh}.
\end{proof}

\begin{proposition}
\label{PE-finite-multiplicity-prop}
Each $\lambda\in\sigma(-\p_t+\mathcal{K}_{\xi,\mu}-I+a(\cdot,\cdot)I)$ with ${\rm Re}\lambda>-1+\max_{x\in\RR^N}\hat a(x)$ is an isolated eigenvalue
with finite algebraic multiplicity.
\end{proposition}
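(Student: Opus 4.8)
The plan is to reduce the statement to a spectral-theoretic fact about a perturbed semigroup generator. Write $\mathcal{L}_{\xi,\mu,a} = -\p_t + \mathcal{K}_{\xi,\mu} - I + a(\cdot,\cdot)I$ acting on $\mathcal{X}_p$, and split it as $\mathcal{L}_{\xi,\mu,a} = \mathcal{A} + \mathcal{K}_{\xi,\mu}$, where $\mathcal{A} = -\p_t - I + a(\cdot,\cdot)I$. The key observation is that $\mathcal{A}$ generates a $C_0$-semigroup $e^{t\mathcal{A}}$ on $\mathcal{X}_p$ that can be written down explicitly by integrating along characteristics in $t$: $(e^{t\mathcal{A}}v)(t_0,x) = e^{-t}e^{\int_{t_0-t}^{t_0} a(s,x)\,ds}\,v(t_0-t,x)$ (using $T$-periodicity in $t$). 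From this formula one reads off that the spectrum of $\mathcal{A}$ is determined by the time-averaged potential: the spectral bound and in fact the full spectrum of $\mathcal{A}$ lies in $\{\mathrm{Re}\,\lambda \le -1 + \max_x \hat a(x)\}$, and more to the point $e^{T\mathcal{A}}$ is (a multiplication-type operator whose) spectral radius equals $e^{T(-1+\max_x \hat a(x))}$, with no isolated points of spectrum above the essential spectrum. This is the ``unperturbed'' picture.

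Next I would bring in $\mathcal{K}_{\xi,\mu}$ as a perturbation. The crucial point is that $\mathcal{K}_{\xi,\mu}$, being an integral (convolution-type) operator with a smooth compactly supported kernel, interacts with $e^{t\mathcal{A}}$ so that the difference between the perturbed and unperturbed evolution is, in an appropriate sense, compact; equivalently, $e^{T\mathcal{L}_{\xi,\mu,a}}$ and $e^{T\mathcal{A}}$ have the same essential spectral radius. The standard route is the Dyson--Phillips expansion $e^{t\mathcal{L}_{\xi,\mu,a}} = \sum_{n\ge 0} S_n(t)$ with $S_0(t)=e^{t\mathcal{A}}$ and $S_{n+1}(t) = \int_0^t e^{(t-s)\mathcal{A}}\,\mathcal{K}_{\xi,\mu}\,S_n(s)\,ds$; one shows the tail $\sum_{n\ge 1} S_n(T)$ is compact (or at least that its essential spectral radius vanishes), using that $\mathcal{K}_{\xi,\mu}$ smooths in $x$ while $e^{t\mathcal{A}}$ does nothing bad in $x$ — here the uniform continuity built into $\mathcal{X}_p$ and the compact support of $k$ give the needed equicontinuity/Arzel\`a--Ascoli compactness. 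Then the essential spectral radius of $e^{T\mathcal{L}_{\xi,\mu,a}}$ equals that of $e^{T\mathcal{A}}$, namely $e^{T(-1+\max_x \hat a(x))}$.

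With that in hand the conclusion is immediate from the spectral mapping theorem for the point/essential spectra of eventually-norm-continuous (indeed, here the semigroup is nice enough that the spectral mapping theorem $\sigma(e^{T\mathcal{L}})\setminus\{0\} = e^{T\sigma(\mathcal{L})}$ holds for the parts that matter): any $\lambda \in \sigma(\mathcal{L}_{\xi,\mu,a})$ with $\mathrm{Re}\,\lambda > -1 + \max_x \hat a(x)$ gives $e^{T\lambda} \in \sigma(e^{T\mathcal{L}_{\xi,\mu,a}})$ with $|e^{T\lambda}| = e^{T\,\mathrm{Re}\,\lambda}$ strictly larger than the essential spectral radius $e^{T(-1+\max_x\hat a(x))}$; hence $e^{T\lambda}$ is an isolated point of the spectrum of the bounded operator $e^{T\mathcal{L}_{\xi,\mu,a}}$ with finite algebraic multiplicity (by Riesz--Schauder theory for operators that are compact perturbations, in the Calkin-algebra sense, above their essential radius). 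Pulling this back through the spectral mapping correspondence, and checking that the (finitely many) preimages of $e^{T\lambda}$ in the relevant strip together carry only finite-dimensional generalized eigenspace, shows $\lambda$ is an isolated eigenvalue of $\mathcal{L}_{\xi,\mu,a}$ with finite algebraic multiplicity, which is exactly the claim.

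The main obstacle, and the step I would spend the most care on, is the compactness/essential-spectrum comparison in the second paragraph: one must verify that convolution against the smooth compactly supported kernel $k$, composed with the characteristic-flow semigroup $e^{t\mathcal{A}}$, genuinely produces compactness on the space $\mathcal{X}_p$ of \emph{uniformly continuous bounded} periodic functions on an unbounded domain — ordinary convolution operators on $C_b(\RR^N)$ are not compact, so the argument must exploit that at least two factors of $\mathcal{K}_{\xi,\mu}$ appear (gaining two derivatives' worth of control) or, more cleanly, pass to the $x$-periodicity to work effectively on a compact torus in the $x$-variable, where $\mathcal{K}_{\xi,\mu}$ does become compact. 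Modulo making that reduction precise, the rest is bookkeeping with the Dyson--Phillips series and the spectral mapping theorem, both of which are standard.
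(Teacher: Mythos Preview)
Your overall strategy --- split off the multiplication/shift generator $\mathcal{A}=-\p_t-I+a(\cdot,\cdot)I$, identify its spectrum with $\{-1+\hat a(x):x\in\RR^N\}$, and treat $\mathcal{K}_{\xi,\mu}$ as the perturbation responsible for pushing spectrum into the region $\{\mathrm{Re}\,\lambda>-1+\max_x\hat a(x)\}$ only through isolated finite-multiplicity eigenvalues --- is exactly the right picture, and it is the picture underlying the reference the paper cites (B\"urger). But the specific mechanism you propose has a real gap.

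The problem is in your compactness step. You argue that the Dyson--Phillips remainder $\sum_{n\ge 1}S_n(T)$ is compact on $\mathcal{X}_p$, and you locate the difficulty in the unboundedness of $\RR^N$, proposing to fix it by passing to the $x$-torus. That is not where the obstruction lies. On $\mathcal{X}_p$, functions are periodic in \emph{both} $t$ and $x$, so one is effectively on a compact set in both variables; nevertheless $\mathcal{K}_{\xi,\mu}$ smooths only in $x$ and does nothing in $t$, and $e^{s\mathcal{A}}$ is merely a shift-and-multiply in $t$. If you compute $S_1(T)$ you find that $(S_1(T)v)(t_0,\cdot)$ depends only on $v(t_0,\cdot)$: it is a $t_0$-parametrized family of (individually compact) integral operators in $x$, hence \emph{not} compact on $\mathcal{X}_p$ --- test against $v_n(t,x)=f_n(t)g(x)$ with $\{f_n\}$ bounded and non-precompact in $C(\RR/T\ZZ)$. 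No number of extra factors of $\mathcal{K}_{\xi,\mu}$ repairs this, since none of them touch $t$.

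The clean route, and the one actually used in this circle of results, works at the level of the resolvent rather than the semigroup: for $\mathrm{Re}\,\lambda>-1+\max_x\hat a(x)$ the operator $(\lambda I-\mathcal{A})^{-1}=(\lambda I+\p_t+I-aI)^{-1}$ exists on $\mathcal{X}_p$ and, crucially, \emph{smooths in $t$} (it is given by integration in $t$; cf.\ \cite[Proposition~3.5]{RaSh}). Hence $\mathcal{K}_{\xi,\mu}(\lambda I-\mathcal{A})^{-1}$ is compact (smoothing in $x$ from $\mathcal{K}_{\xi,\mu}$, in $t$ from the resolvent; this is precisely the operator $U_{\alpha,\xi,\mu}$ the paper uses later, shown compact in \cite[Proposition~3.6]{RaSh}). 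Writing $\lambda I-\mathcal{L}=(\lambda I-\mathcal{A})\bigl(I-(\lambda I-\mathcal{A})^{-1}\mathcal{K}_{\xi,\mu}\bigr)$ and invoking analytic Fredholm theory on the half-plane $\{\mathrm{Re}\,\lambda>-1+\max_x\hat a(x)\}$ then gives the conclusion directly, with no spectral-mapping bookkeeping needed. If you want to keep the semigroup formulation, the correct hypothesis is that $\mathcal{K}_{\xi,\mu}$ is $\mathcal{A}$-compact (equivalently, relatively compact with respect to the resolvent), not that the Dyson--Phillips tail is compact.
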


\begin{proof}
It follows from \cite[Proposition 2.1(ii)]{Bur}.
\end{proof}

The following theorem shows that  the  principal eigenvalue of $-\p_t+\mathcal{K}_{\xi,\mu}-I+a(\cdot,\cdot)I$ (if it exists) is algebraically simple,
which is new and plays an important role in the proof of the existence of spreading speeds of \eqref{main-eq}.

\begin{theorem}
\label{PE-simplicity-thm}
Suppose that $\lambda_0(\xi_0,\mu_0,a)$ is the principal eigenvalue of $-\p_t+\mathcal{K}_{\xi_0,\mu_0}-I+a(\cdot,\cdot)I$.
Then $\lambda_0(\xi,\mu,a)$ is an algebraically simple principal eigenvalue of $-\p_t+\mathcal{K}_{\xi,\mu}-I+a(\cdot,\cdot)I$ with
 a positive eigenfunction $\phi(\cdot,\cdot;\xi,\mu)$, $\|\phi(\cdot,\cdot;\xi,\mu)\|=1$,  and
$\lambda_0(\xi,\mu,a)$ and $\phi(\cdot,\cdot;\xi,\mu)$ are smooth in $\xi$ and $\mu$ for $(\xi,\mu)$ near $(\xi_0,\mu_0)$.
\end{theorem}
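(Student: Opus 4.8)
The plan is to pass to the Poincar\'e (period) map of the linearized equation and argue there by positivity together with analytic perturbation theory. Write $S_{\xi,\mu}:=\Phi(T;\xi,\mu,a)$ for the period-$T$ solution operator of \eqref{linear-new-eq1} acting on $X_p$ (see \eqref{phi-xi-mu-eq}). By the standard Floquet-type reduction for such operators (carried out for the present class in \cite{RaSh}), $\lambda$ is an eigenvalue of $-\p_t+\mathcal{K}_{\xi,\mu}-I+a(\cdot,\cdot)I$ on $\mathcal{X}_p$, with eigenfunction $v$, exactly when $e^{\lambda T}$ is an eigenvalue of $S_{\xi,\mu}$ with eigenfunction $v(0,\cdot)$; this correspondence preserves algebraic multiplicities and isolated points, so $r(S_{\xi,\mu})=e^{\lambda_0(\xi,\mu,a)T}$ and ``$\lambda_0(\xi,\mu,a)$ is the principal eigenvalue'' is equivalent to ``$r(S_{\xi,\mu})$ is an isolated eigenvalue of $S_{\xi,\mu}$ of finite algebraic multiplicity with a positive eigenfunction''. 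Three facts will be used: (i) $S_{\xi,\mu}$ is strongly positive, $S_{\xi,\mu}(X_p^+\setminus\{0\})\subseteq{\rm Int}(X_p^+)$ --- this is Proposition \ref{comparison-linear-prop}(2); (ii) the kernel $e^{-\mu(y-x)\cdot\xi}k(y-x)$ is real-analytic in $(\xi,\mu)$ with fixed compact support, so $\mathcal{K}_{\xi,\mu}$, and hence (via the norm-convergent variation-of-constants series) $S_{\xi,\mu}$, is analytic in $(\xi,\mu)$ in the operator norm; (iii) $X_p^+$ is a normal cone in $X_p$ with nonempty interior, and every element of ${\rm Int}(X_p^+)$ is an order unit.

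Step one: at \emph{any} $(\xi,\mu)$ at which $\lambda_0(\xi,\mu,a)$ is the principal eigenvalue it is automatically algebraically simple. Put $S=S_{\xi,\mu}$, $r=r(S)$, and let $\phi$ be a positive eigenfunction; by (i), $\phi=r^{-1}S\phi\in{\rm Int}(X_p^+)$. \emph{Geometric simplicity}: if $Sv=rv$, passing to real and imaginary parts we may take $v$ real; set $\epsilon^*=\sup\{\epsilon>0:\phi-\epsilon v\in X_p^+\}$, which (after replacing $v$ by $-v$ when $v\le0$) is positive and finite, and by compactness of the space-time torus the function $w:=\phi-\epsilon^* v\ge0$ attains the value $0$; if $w\not\equiv0$ then $w=r^{-1}Sw\in{\rm Int}(X_p^+)$ by (i), a contradiction, so $w\equiv0$ and $v\in{\rm span}\{\phi\}$. \emph{Algebraic simplicity}: by hypothesis $r$ is isolated with finite algebraic multiplicity, hence a pole of $(z-S)^{-1}$, of some order $m$; the adjoint $S^*$ on $X_p^*$ (periodic signed measures, a Banach lattice) is positive with $r(S^*)=r$ a pole of $(z-S^*)^{-1}$ of the same order $m$, and by the classical fact that the leading Laurent coefficient of the resolvent of a positive operator at its spectral radius is positive, there is $\phi^*\in(X_p^*)^+\setminus\{0\}$ with $S^*\phi^*=r\phi^*$; since $\phi\in{\rm Int}(X_p^+)$, $\langle\phi^*,\phi\rangle>0$. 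If $m\ge2$, take $\psi$ with $(S-r)\psi\in\ker(S-r)\setminus\{0\}$, so $(S-r)\psi=\alpha\phi$ with $\alpha\ne0$ by geometric simplicity, and then $0=\langle(S^*-r)\phi^*,\psi\rangle=\langle\phi^*,(S-r)\psi\rangle=\alpha\langle\phi^*,\phi\rangle\ne0$, a contradiction. Hence $m=1$. In particular $r_0:=r(S_{\xi_0,\mu_0})$ is an isolated algebraically simple eigenvalue of $S_{\xi_0,\mu_0}$.

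Step two: by (ii) and step one, $\{S_{\xi,\mu}\}$ is an analytic family having $r_0$ as an isolated algebraically simple eigenvalue at $(\xi_0,\mu_0)$, so by analytic perturbation theory (Kato) there is a neighborhood of $(\xi_0,\mu_0)$ on which $S_{\xi,\mu}$ has a unique eigenvalue $\nu(\xi,\mu)$ near $r_0$; it is simple and isolated, and $\nu(\xi,\mu)$ together with an eigenfunction $\psi(\cdot;\xi,\mu)$ (equal at $(\xi_0,\mu_0)$ to the positive eigenfunction of $S_{\xi_0,\mu_0}$) are analytic, hence smooth, in $(\xi,\mu)$; since $S_{\xi,\mu}$ is a real operator and $r_0$ real, uniqueness forces $\nu(\xi,\mu)$ and $\psi(\cdot;\xi,\mu)$ to be real, and as ${\rm Int}(X_p^+)$ is open we get $\psi(\cdot;\xi,\mu)\in{\rm Int}(X_p^+)$ near $(\xi_0,\mu_0)$. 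To see $\nu(\xi,\mu)=r(S_{\xi,\mu})$, use (iii): for $u\in X_p$ pick $M$ with $-M\psi(\cdot;\xi,\mu)\le u\le M\psi(\cdot;\xi,\mu)$; positivity of $S_{\xi,\mu}$ gives $\|S_{\xi,\mu}^n u\|\le CM\,\nu(\xi,\mu)^n$, so $r(S_{\xi,\mu})=\lim_n\|S_{\xi,\mu}^n\|^{1/n}\le\nu(\xi,\mu)$, and with $\nu(\xi,\mu)\in\sigma(S_{\xi,\mu})$ this forces equality. Consequently $\lambda_0(\xi,\mu,a)=\frac1T\log\nu(\xi,\mu)$ is smooth in $(\xi,\mu)$, and, translating back, it is an isolated algebraically simple eigenvalue of $-\p_t+\mathcal{K}_{\xi,\mu}-I+a(\cdot,\cdot)I$ with positive eigenfunction --- hence the principal eigenvalue, the domination condition being automatic since $\lambda_0(\xi,\mu,a)=\sup\{{\rm Re}\,\lambda:\lambda\in\sigma(-\p_t+\mathcal{K}_{\xi,\mu}-I+a(\cdot,\cdot)I)\}$. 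Recovering the time-periodic eigenfunction by $(t,x)\mapsto e^{-\lambda_0(\xi,\mu,a)t}(\Phi(t;\xi,\mu,a)\psi(\cdot;\xi,\mu))(x)$ and normalizing its $\mathcal{X}_p$-norm to $1$ yields the asserted $\phi(\cdot,\cdot;\xi,\mu)$, which is strictly positive and smooth in $(\xi,\mu)$.

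The main obstacle is that $S_{\xi,\mu}$ is \emph{not} compact --- it is only a compact perturbation of the multiplication operator by $e^{T(\hat a(\cdot)-1)}$ --- so the classical Krein--Rutman theorem is unavailable; strong positivity from the comparison principle (Proposition \ref{comparison-linear-prop}(2)) must do the work of geometric simplicity, while the positive-adjoint/resolvent-pole argument delivers algebraic simplicity and crucially uses that $\lambda_0$ is \emph{principal}, i.e.\ that $r(S_{\xi,\mu})$ lies strictly above $r_{\rm ess}(S_{\xi,\mu})=e^{T(-1+\max_x\hat a(x))}$ (cf.\ Propositions \ref{PE-necessary-sufficient-prop} and \ref{PE-finite-multiplicity-prop}). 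A further point needing care is that the perturbed eigenvalue $\nu(\xi,\mu)$ remains the spectral radius (equivalently, stays principal), which is where the normality of $X_p^+$ and the order-unit property are used, together with the Floquet reduction itself, whose details for these nonlocal operators we borrow from \cite{RaSh}.
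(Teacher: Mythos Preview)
Your proof is correct and follows a genuinely different route from the paper's. The paper does not work with the Poincar\'e map; instead it introduces, for $\alpha>-1+\max_x\hat a(x)$, the auxiliary operator $U_{\alpha,\xi,\mu}:=\mathcal{K}_{\xi,\mu}\circ(\alpha I+\p_t+I-aI)^{-1}$ on $\mathcal{X}_p$, which (by results from \cite{RaSh}) is \emph{compact} and positive, and for which $\lambda_0$ being the principal eigenvalue translates to $r(U_{\lambda_0,\xi_0,\mu_0})=1$. Algebraic simplicity of this eigenvalue $1$ is then proved by an elementary iteration: from $(I-U)\psi=\gamma\phi$ one derives $\psi=U^n(\psi+n\gamma\phi)$ for all $n\ge1$, and positivity of $U$ together with $\phi\gg0$ forces $\gamma=0$; this is transferred back to $-\p_t+\mathcal{K}_{\xi_0,\mu_0}-I+aI$ via the splitting $\mathcal{X}_p=\mathrm{span}\{\phi\}\oplus\mathcal{X}_{2,p}$ induced by $U$, and smoothness follows from standard perturbation of the isolated simple eigenvalue. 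The paper's route buys compactness up front, so no separate argument is needed to show the perturbed eigenvalue remains the spectral radius, and the simplicity proof is entirely self-contained. Your Poincar\'e-map approach is conceptually closer to classical Floquet theory and avoids the auxiliary operator, but must compensate for non-compactness with the strong-positivity plus adjoint-eigenvector argument for algebraic simplicity, and must supply the order-unit step to pin down $\nu(\xi,\mu)=r(S_{\xi,\mu})$ after perturbation. One small caveat: the claim that the correspondence $\lambda\leftrightarrow e^{\lambda T}$ between $\sigma(-\p_t+\mathcal{K}_{\xi,\mu}-I+aI)$ and $\sigma(S_{\xi,\mu})$ preserves \emph{algebraic} multiplicities is not stated in \cite{RaSh} (which gives only $r(S_{\xi,\mu})=e^{\lambda_0 T}$); it is, however, a standard fact for bounded $T$-periodic perturbations of $\p_t$, obtainable from the explicit formula expressing the resolvent of $-\p_t+L(t)$ on periodic functions in terms of $(e^{\lambda T}I-S)^{-1}$.
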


\begin{proof}
First of all, note that for $\alpha>-1+\max_{x\in\RR^N}\hat a(x)$,  $(\alpha I+\p_t +I-aI)^{-1}$ exists (see \cite[Proposition 3.5]{RaSh}).  For given $\alpha>-1+\max_{x\in\RR^N}\hat a(x)$,
let
$$
(U_{\alpha,\xi,\mu}u)(t,x)=\int_{\RR^N}e^{-\mu (y-x)\cdot\xi}k(y-x)(\alpha+\p_t+I-aI)^{-1}u(t,y)dy
$$
and
$$
r(\alpha)=r(U_{\alpha,\xi,\mu}).
$$
By \cite[Proposition 3.6]{RaSh}, $U_{\alpha,\xi,\mu}:\mathcal{X}_p\to\mathcal{X}_p$ is a positive and compact operator.

Next, suppose that $\lambda_0(\xi_0,\mu_0,a)$ is the principal eigenvalue of $-\p_t+\mathcal{K}_{\xi_0,\mu_0}-I+a(\cdot,\cdot)I$.
By \cite[Proposition 3.9]{RaSh}, $\lambda_0(\xi_0,\mu_0,a)$ is an isolated geometrically simple eigenvalue  of $-\p_t+\mathcal{K}_{\xi_0,\mu_0}-I+a(\cdot,\cdot)I$.
Let $\alpha_0=\lambda_0(\xi_0,\mu_0,a)$. This implies  that
$r(\alpha_0)=1$ and $r(\alpha_0)$ is an isolated geometrically simple eigenvalue of
$U_{\alpha_0,\xi_0,\mu_0}$ with $\phi(\cdot,\cdot;\xi_0,\mu_0)$ being a positive eigenfunction. We claim that $r(\alpha_0)$ is an algebraically simple isolated eigenvalue
of $U_{\alpha_0,\xi_0,\mu_0}$ with a positive eigenfunction $\phi(\cdot,\cdot)$, or equivalently, $(I-U_{\alpha_0,\xi_0,\mu_0})^2\psi=0$  ($\psi\in \mathcal{X}_p$)
iff $\psi\in {\rm span}\{\phi\}$. If fact, suppose that $\psi\in\mathcal{X}_p\setminus\{0\}$ is such that $(I-U_{\alpha_0,\xi_0,\mu_0})^2\psi=0$. Then
\begin{equation}
\label{simplicity-eq1}
(I-U_{\alpha_0,\xi_0,\mu_0})\psi=\gamma \phi
\end{equation}
 for some
$\gamma\in\RR$. We prove that $\gamma=0$. Assume that $\gamma\not =0$. Without loss of generality, we assume that $\gamma>0$.
By \eqref{simplicity-eq1} and $U_{\alpha_0,\xi_0,\mu_0}\phi=\phi$, we have
\begin{equation}
\label{simplicity-eq2}
\psi=U_{\alpha_0,\xi_0,\mu_0}\psi+\gamma\phi=U_{\alpha_0,\xi_0,\mu_0}(\psi+\gamma\phi).
\end{equation}
Then by \eqref{simplicity-eq2} and $U_{\alpha_0,\xi_0,\mu_0}\phi=\phi$,
\begin{align*}
\psi+\gamma\phi&=U_{\alpha_0,\xi_0,\mu_0}(\psi+\gamma\phi)+\gamma\phi\\
&=U_{\alpha_0,\xi_0,\mu_0}(\psi+2\gamma\phi)
\end{align*}
and hence
\begin{align*}
\psi&=U_{\alpha_0,\xi_0,\mu_0}(\psi+\gamma\phi)\\
&=U_{\alpha_0,\xi_0,\mu_0}\Big(U_{\alpha_0,\xi_0,\mu_0}(\psi+2\gamma\phi)\Big)\\
&=U^2_{\alpha_0,\xi_0,\mu_0}(\psi+2\gamma \phi).
\end{align*}
By induction, we have
$$\psi=U^n_{\alpha_0,\xi_0,\mu_0}(\psi+n\gamma\phi)\quad \forall \,\, n\ge 1.
$$
This implies that
$$
\frac{\psi}{n}=U^n_{\alpha_0,\xi_0,\mu_0}(\frac{\psi}{n}+\gamma\phi).
$$
Note that $\phi(t,x)>0$ and then
$$
\frac{\psi(t,x)}{n}+\gamma \phi(t,x)>0\quad \forall\, \, n\gg 1.
$$
By the positivity of $U_{\alpha_0,\xi_0,\mu_0}$, we then have
$$
\frac{\psi(t,x)}{n}>0\quad \forall\,\, n\gg 1
$$
and then
$$
\frac{\psi(t,x)}{n}-\gamma\phi(t,x)=\big(U^n_{\alpha_0,\xi_0,\mu_0}(\frac{\psi}{n})\big)(t,x)>0\quad \forall\,\, n\gg 1.
$$
It then follows that
$$
-\gamma \phi(t,x)\ge 0
$$
and then
$$
\gamma\le 0.
$$
This is a contradiction. Therefore, $\gamma=0$ and then by \eqref{simplicity-eq1},
$$
\psi\in{\rm span}\{\phi\}.
$$
The claim is thus proved.

Now, we prove that $\lambda_0(\xi_0,\mu_0,a)$ is an algebraically simple eigenvalue  of $-\p_t+\mathcal{K}_{\xi_0,\mu_0}-I+a(\cdot,\cdot)I$ or
equivalently, $(-\p_t+\mathcal{K}_{\xi_0,\mu_0}-I+a(\cdot,\cdot)I-\alpha_0I )^2\psi=0$ iff $\psi\in {\rm span}\{\phi\}$. By the above arguments,
 there are one dimensional subspace $\mathcal{X}_{1,p}={\rm span}(\phi)$ and one-codimensional subspace $\mathcal{X}_{2,p}$ of
$\mathcal{X}_p$ such that
$$
\mathcal{X}_p=\mathcal{X}_{1,p}\oplus \mathcal{X}_{2,p},
$$
\begin{equation}
\label{simplicity-eq3}
U_{\alpha_0,\xi_0,\mu_0}\mathcal{X}_{1,p}=\mathcal{X}_{1,p},\quad U_{\alpha_0,\xi_0,\mu_0}\mathcal{X}_{2,p}\subset \mathcal{X}_{2,p},
\end{equation}
and
$$
1\not\in \sigma(U_{\alpha_0,\xi_0,\mu_0}|_{\mathcal{X}_{2,p}}).
$$
Suppose that $\psi\in\mathcal{X}_p$ is such that
$$
(-\p_t+\mathcal{K}_{\xi_0,\mu_0}-I+a(\cdot,\cdot)I-\alpha_0I )^2\psi=0.
$$
Then there is $\gamma\in\RR$ such that
\begin{equation}
\label{simplicity-eq4}
(-\p_t+\mathcal{K}_{\xi_0,\mu_0}-I+a(\cdot,\cdot)I-\alpha_0I )\psi=\gamma \phi.
\end{equation}
Let $\psi_i\in\mathcal{X}_{i,p}$ ($i=1,2$) be such that
$$
\psi=(\alpha_0 I+\p_t+I-aI)^{-1} \psi_1+ (\alpha_0 I+\p_t+I-aI)^{-1}\psi_2.
$$
Then
\begin{align*}
(-\p_t+\mathcal{K}_{\xi_0,\mu_0}-I+a(\cdot,\cdot)I-\alpha_0I )\psi&=(U_{\alpha_0,\xi_0,\mu_0}-I)\psi_1+(U_{\alpha_0,\xi_0,\mu_0}-I)\psi_2\\
&=(U_{\alpha_0,\xi_0,\mu_0}-I)\psi_2\\
&=\gamma\phi.
\end{align*}
This  together with \eqref{simplicity-eq3} implies that $\gamma\phi\in\mathcal{X}_{2,p}$ and hence $\gamma=0$.
By \eqref{simplicity-eq4}, $\psi\in {\rm span}\{\phi\}$ and hence $\lambda_0(\xi_0,\mu_0,a)$ is an algebraically simple eigenvalue  of $-\p_t+\mathcal{K}_{\xi_0,\mu_0}-I+a(\cdot,\cdot)I$.
The rest of the proposition follows from perturbation theory of isolated eigenvalues of closed operators.
\end{proof}

\begin{proposition}
 \label{PE-minimum-prop} For given $\xi\in S^{N-1}$,
 assume $\lambda_0(\xi,0,a)>0$ and $\lambda_0(\xi,\mu,a)$ is the principal eigenvalue
 of $-\p_t+\mathcal{K}_{\xi,\mu}-I+a(\cdot,\cdot)I$ for $\mu>0$.
  There is $\mu^*(\xi)\in (0,\infty)$ such that
 \begin{equation}
 \label{mu-star-eq}
 \frac{\lambda_0(\xi,\mu^*(\xi),a)}{\mu^*(\xi)}=\inf_{\mu>0}\frac{\lambda_0(\xi,\mu,a)}{\mu}.
 \end{equation}
 \end{proposition}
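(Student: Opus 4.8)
**Proof proposal for Proposition 3.6 (existence of $\mu^*(\xi)$).**

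The plan is to study the function $\mu\mapsto g(\mu):=\dfrac{\lambda_0(\xi,\mu,a)}{\mu}$ on $(0,\infty)$ and to show that it attains its infimum at an interior point by establishing (i) continuity of $g$, (ii) $g(\mu)\to+\infty$ as $\mu\to 0^+$, and (iii) $g(\mu)\to+\infty$ as $\mu\to+\infty$. Once these three facts are in place, $g$ is a continuous function on $(0,\infty)$ that blows up at both ends, so it attains a global minimum at some $\mu^*(\xi)\in(0,\infty)$, which is exactly \eqref{mu-star-eq}. Continuity of $g$ on $(0,\infty)$ follows at once from the smoothness of $\mu\mapsto\lambda_0(\xi,\mu,a)$ guaranteed by Theorem \ref{PE-simplicity-thm} (we are assuming $\lambda_0(\xi,\mu,a)$ is the principal eigenvalue for all $\mu>0$), since $\mu>0$ stays bounded away from $0$.

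For the behavior as $\mu\to 0^+$: the numerator satisfies $\lambda_0(\xi,\mu,a)\to\lambda_0(\xi,0,a)=\lambda_0(a)>0$ by continuity (again Theorem \ref{PE-simplicity-thm} extends smoothness to a neighborhood of any $\mu_0>0$, and one checks continuity at $0$ from below using monotonicity/perturbation arguments, or simply observes $\lambda_0(\xi,\mu,a)$ is bounded below near $0$). Hence $g(\mu)=\lambda_0(\xi,\mu,a)/\mu\ge \text{const}/\mu\to+\infty$. For the behavior as $\mu\to+\infty$, the key is a lower bound on $\lambda_0(\xi,\mu,a)$ that grows at least linearly in $\mu$. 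To get this, test the variational/eigenvalue relation with a suitable function, or more directly: pick $x_0$ and a direction so that there exists a small ball $B$ on which, for $y$ in a half-space relative to $x$, the weight $e^{-\mu(y-x)\cdot\xi}$ is large; using the positivity of $k$ on $\|z\|<r_0$ and choosing $z=y-x$ with $z\cdot\xi<0$, $\|z\|<r_0$, the kernel term $\int e^{-\mu(y-x)\cdot\xi}k(y-x)\phi(t,y)\,dy$ forces the principal eigenvalue to satisfy $\lambda_0(\xi,\mu,a)\ge c\,e^{\mu r_0/2}-C$ for suitable constants, so $g(\mu)\to+\infty$ exponentially. A cleaner route: since $\lambda_0(\xi,\mu,a)\ge -1+\max_x\hat a(x)$ always, and on the other hand $\lambda_0(\xi,\mu,a)$ dominates the principal eigenvalue of the operator with the kernel replaced by its restriction to any subball, one obtains the required growth by comparison with a scalar ODE after integrating in $x$ against the periodic eigenfunction.

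The main obstacle is step (iii), the lower bound $\lambda_0(\xi,\mu,a)/\mu\to\infty$ as $\mu\to\infty$: unlike the local (Laplacian) case where the principal eigenvalue grows like $\mu^2$ by an elementary computation, here one must exploit the compact support and strict positivity of $k$ to show the nonlocal term $\mathcal K_{\xi,\mu}$ has operator-theoretic "size" growing super-linearly in $\mu$. I would handle this by estimating $r(U_{\alpha,\xi,\mu})$ from below: fix $\alpha$ large, take a nonnegative test function $\psi$ supported near a point, and compute $\langle U_{\alpha,\xi,\mu}\psi,\text{(dual)}\rangle$ to see it grows like $e^{c\mu}$; since $r(\alpha)=1$ precisely when $\alpha=\lambda_0(\xi,\mu,a)$ and $\alpha\mapsto r(\alpha)$ is decreasing, this forces $\lambda_0(\xi,\mu,a)$ to grow at least linearly (in fact exponentially) in $\mu$. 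The continuity and $\mu\to 0^+$ steps are routine given the results already established in the excerpt.
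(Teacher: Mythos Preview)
Your overall strategy coincides with the paper's: show $g(\mu)=\lambda_0(\xi,\mu,a)/\mu$ is continuous on $(0,\infty)$ and tends to $+\infty$ at both endpoints, then conclude by the extreme value theorem. Your treatment of continuity and of the limit $\mu\to 0^+$ is correct and essentially identical to the paper's (both rely on Theorem~\ref{PE-simplicity-thm} and the hypothesis $\lambda_0(\xi,0,a)>0$).

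Where you diverge is step~(iii). You offer several sketches---a half-space estimate on the kernel, ``integrating against the periodic eigenfunction,'' and a test-function lower bound on $r(U_{\alpha,\xi,\mu})$---but none is carried through, and the last two are vague as written. The paper's argument here is much more direct and avoids all of this machinery: one uses the elementary monotonicity $\lambda_0(\xi,\mu,a)\ge\lambda_0(\xi,\mu,a_{\min})$, where $a_{\min}=\min_{t,x}a(t,x)$. For the constant-coefficient operator the function $\phi\equiv 1$ is a positive periodic eigenfunction, so
\[
\lambda_0(\xi,\mu,a_{\min})=\int_{\RR^N}e^{-\mu y\cdot\xi}k(y)\,dy-1+a_{\min}
\]
is explicit. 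Expanding the exponential and using that $k\ge k_0>0$ on $\|y\|\le r_0/2$ (so the even moments are bounded below and the odd moments vanish by symmetry) gives $\lambda_0(\xi,\mu,a_{\min})\ge m_0+m\mu^2-1+a_{\min}$ with $m>0$ independent of $\xi$, whence $g(\mu)\to+\infty$ as $\mu\to\infty$. Your intuition that the growth is in fact exponential is correct, but the paper's quadratic lower bound via reduction to constant coefficients is all that is needed and is considerably cleaner than the routes you propose.
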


\begin{proof}
Note that $\lambda_0(\xi,\mu,a)\geq\lambda_0(\xi,\mu,a_{\min})$,
 and
$$
\lambda_0(\xi,\mu,a_{\min})=\int_{\RR^N}e^{-\mu y\cdot \xi}k(y)dy-1+a_{\min}
$$ with 1 as an eigenfunction.
Note also that there is $k_0>0$ such that $k(y)\ge k_0$ for $\|y\|\le \frac{r_0}{2}$.
Let $m_{n}(\xi)=k_0\int_{ \|y\|\le \frac{r_0}{2}}\frac{(- y\cdot \xi)^{n}}{n!}dy$. Then, for $\mu>0$
\begin{align*}
\int_{\RR^N}e^{-\mu y\cdot \xi}k(y)dy-1+a_{\min}&\ge k_0 \int_{\|y\|\le \frac{r_0}{2}} e^{-\mu y\cdot\xi}dy -1+a_{\min}\\
&= k_0\sum_{n=0}^{\infty}\int_{\|y\|\le \frac{r_0}{2}}\frac{(-\mu y\cdot \xi)^{n}}{n!}
dy -1 +a_{\min}\\
&\geq m_0+ m_{2}(\xi)\mu^{2}+\sum_{n=2}^{\infty}m_{2n}(\xi)\mu^{2n}-1+a_{\min}
\end{align*}
Let $m:=\ds\inf_{\xi\in
S^{N-1}}m_{2}(\xi)(>0)$.
We then have
 $$\frac{\lambda_0(\xi,\mu,a)}{\mu}\ge \frac{m_0+m\mu^2-1+a_{\min}}{\mu} \to \infty$$
  as $\mu\to\infty$. By
$\lambda_0(\xi,0,a)>0$,
$$\frac{\lambda_0(\xi,\mu,a)}{\mu}\to\infty$$
 as $\mu\to 0+$. This together with the smoothness of
$\lambda_0(\xi,\mu,a)$ (see Theorem \ref{PE-simplicity-thm}) implies that there is $\mu^*(\xi)$ such that
\eqref{mu-star-eq} holds.
\end{proof}

\begin{proposition}
\label{PE-convex-prop} For given $\xi\in S^{N-1}$,
suppose that $\lambda_0(\xi,\mu,a)$ is the principal eigenvalue of $-\p_t+\mathcal{K}_{\xi,\mu}-I+a(\cdot,\cdot)I$  for all $\mu\in\RR$. Then
$\lambda_0(\xi,\mu,a)$ is convex in $\mu$.
\end{proposition}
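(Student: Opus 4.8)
\medskip
\noindent\textit{Proof strategy.} The plan is to prove the convexity inequality directly from the principal eigenfunctions supplied by Theorem \ref{PE-simplicity-thm}. Fix $\mu_1,\mu_2\in\RR$ and $\theta\in(0,1)$, set $\mu=\theta\mu_1+(1-\theta)\mu_2$, $\lambda_i=\lambda_0(\xi,\mu_i,a)$, and $\lambda=\theta\lambda_1+(1-\theta)\lambda_2$; the goal is $\lambda_0(\xi,\mu,a)\le\lambda$. By hypothesis and Theorem \ref{PE-simplicity-thm}, each of $-\p_t+\mathcal{K}_{\xi,\mu_1}-I+aI$, $-\p_t+\mathcal{K}_{\xi,\mu_2}-I+aI$, $-\p_t+\mathcal{K}_{\xi,\mu}-I+aI$ has a principal eigenvalue with a strictly positive eigenfunction in $\mathcal{X}_p$; being continuous and periodic in $(t,x)$, each such eigenfunction is bounded away from $0$, and each is $C^1$ in $t$ because its $t$-derivative equals a continuous right-hand side by the eigenvalue equation. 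Write $\phi_i:=\phi(\cdot,\cdot;\xi,\mu_i)$, $\phi^*:=\phi(\cdot,\cdot;\xi,\mu)$, and take as a test function the geometric interpolant $\psi:=\phi_1^{\theta}\phi_2^{1-\theta}$, which again belongs to $\mathcal{X}_p$, is $C^1$ in $t$, and is bounded away from $0$.

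First I would verify that $\psi$ is a pointwise sub-eigenfunction at the interpolated parameter:
\[
(-\p_t+\mathcal{K}_{\xi,\mu}-I+aI)\psi\ \le\ \lambda\psi\qquad\text{on }\RR\times\RR^N .
\]
This rests on two elementary convexity facts. Since the weighted kernel factors,
\[
e^{-\mu(y-x)\cdot\xi}k(y-x)=\bigl(e^{-\mu_1(y-x)\cdot\xi}k(y-x)\bigr)^{\theta}\bigl(e^{-\mu_2(y-x)\cdot\xi}k(y-x)\bigr)^{1-\theta},
\]
and $\psi(t,y)=\phi_1(t,y)^{\theta}\phi_2(t,y)^{1-\theta}$, Hölder's inequality with exponents $1/\theta$ and $1/(1-\theta)$ applied to the $y$-integral gives, for every $(t,x)$,
\[
(\mathcal{K}_{\xi,\mu}\psi)(t,x)\ \le\ \bigl((\mathcal{K}_{\xi,\mu_1}\phi_1)(t,x)\bigr)^{\theta}\bigl((\mathcal{K}_{\xi,\mu_2}\phi_2)(t,x)\bigr)^{1-\theta}.
\]
On the other hand the eigenvalue equations yield $\mathcal{K}_{\xi,\mu_i}\phi_i=\p_t\phi_i+(1-a+\lambda_i)\phi_i>0$, hence $(\mathcal{K}_{\xi,\mu_i}\phi_i)/\phi_i=\p_t(\log\phi_i)+1-a+\lambda_i$. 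Dividing the last display by $\psi=\phi_1^{\theta}\phi_2^{1-\theta}$ and applying the weighted arithmetic--geometric mean inequality $X^{\theta}Y^{1-\theta}\le\theta X+(1-\theta)Y$ with $X=(\mathcal{K}_{\xi,\mu_1}\phi_1)/\phi_1$ and $Y=(\mathcal{K}_{\xi,\mu_2}\phi_2)/\phi_2$ produces $(\mathcal{K}_{\xi,\mu}\psi)/\psi\le\p_t(\log\psi)+1-a+\lambda$, which is exactly the claimed sub-eigenfunction inequality.

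Finally I would convert this into the eigenvalue bound by a touching-point (maximum-principle) comparison with the genuine principal eigenfunction $\phi^*$ at parameter $\mu$. Set $s^*=\max_{(t,x)}\phi^*(t,x)/\psi(t,x)$, which is finite, positive, and attained at some $(t_0,x_0)$ by continuity and periodicity; then $w:=s^*\psi-\phi^*\ge0$ with $w(t_0,x_0)=0$, so $(t_0,x_0)$ is a global minimum of $w$ over the periodicity cell, whence $\p_t w(t_0,x_0)=0$, $w(t_0,x_0)=0$, and $(\mathcal{K}_{\xi,\mu}w)(t_0,x_0)\ge0$ since the kernel is nonnegative. Evaluating $(-\p_t+\mathcal{K}_{\xi,\mu}-I+aI)w=s^*(-\p_t+\mathcal{K}_{\xi,\mu}-I+aI)\psi-\lambda_0(\xi,\mu,a)\phi^*\le s^*\lambda\psi-\lambda_0(\xi,\mu,a)\phi^*$ at $(t_0,x_0)$ and using $s^*\psi(t_0,x_0)=\phi^*(t_0,x_0)$ gives $0\le(\mathcal{K}_{\xi,\mu}w)(t_0,x_0)\le\bigl(\lambda-\lambda_0(\xi,\mu,a)\bigr)\phi^*(t_0,x_0)$; since $\phi^*(t_0,x_0)>0$ this forces $\lambda_0(\xi,\mu,a)\le\lambda$, i.e.\ $\mu\mapsto\lambda_0(\xi,\mu,a)$ is convex. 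I expect the main obstacle to be technical bookkeeping rather than a genuine difficulty: one must check carefully that the geometric interpolant $\psi$ inherits enough $t$-regularity and uniform positivity from the eigenfunctions of Theorem \ref{PE-simplicity-thm} for the pointwise Hölder/AM--GM manipulation and the touching-point step to be legitimate; the two convexity inequalities and the reduction itself are routine.
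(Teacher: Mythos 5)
Your proof is correct, and it takes a genuinely different route from the paper. The paper works at the \emph{semigroup level}: it invokes $r(\Phi^p(T;\xi,\mu,a))=e^{\lambda_0(\xi,\mu,a)T}$, represents the time-$T$ solution operator by a kernel $e^{\mu(x-y)\cdot\xi}m(x;y,dy)$, and then cites the argument of \cite[Theorem A(2)]{ShZh0} for log-convexity of the spectral radius $\mu\mapsto r(\Phi^p(T;\xi,\mu,a))$, from which convexity of $\lambda_0$ follows upon taking $\frac{1}{T}\ln$. Your argument instead works at the \emph{generator level}: you apply Hölder to the multiplicatively interpolating kernel $e^{-\mu(y-x)\cdot\xi}k(y-x)$ (not to the induced measure $m$), produce the geometric interpolant $\psi=\phi_1^{\theta}\phi_2^{1-\theta}$ as a pointwise sub-eigenfunction at $\mu=\theta\mu_1+(1-\theta)\mu_2$ via Hölder and AM--GM, and then close with a touching-point comparison against the true principal eigenfunction $\phi^*$ at $\mu$. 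Both proofs are at bottom a Hölder argument, but yours is self-contained (it never needs the $m(x;y,dy)$ representation of $\Phi(T)$ nor the imported log-convexity of the spectral radius), and its endgame is a maximum-principle step rather than a spectral-radius identity. The price is that you lean on pointwise properties of the three principal eigenfunctions: strict positivity bounded away from zero (which follows from continuity, periodicity and the strong positivity in Proposition~\ref{comparison-linear-prop}(2), or \cite[Proposition 3.9]{RaSh}), and $C^1$-in-$t$ regularity (which, as you note, is forced by the eigenvalue equation itself). Once these are recorded, the Hölder/AM--GM computation, the sub-eigenfunction inequality, and the touching-point inequality $0\le(\mathcal{K}_{\xi,\mu}w)(t_0,x_0)\le(\lambda-\lambda_0(\xi,\mu,a))\phi^*(t_0,x_0)$ are all correct, so the conclusion $\lambda_0(\xi,\mu,a)\le\theta\lambda_0(\xi,\mu_1,a)+(1-\theta)\lambda_0(\xi,\mu_2,a)$ holds.
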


\begin{proof}
First, recall that $\Phi(t;\xi,\mu,a)$ is the solution operator of \eqref{linear-new-eq1}. Let
$$
\Phi^p(T;\xi,\mu,a)=\Phi(T;\xi,\mu,a)|_{X_p}.
$$
 By \cite[Proposition 3.10]{RaSh},  we have
 $$r(\Phi^p(T;\xi,\mu,a))=e^{\lambda_0(\xi,\mu,a)T}.
 $$

Note that $\Phi(t;\xi,0,a)$ is independent of $\xi\in S^{N-1}$.
 We put
\begin{equation}
\label{tilde-phi-eq} \tilde \Phi(t;a)= \Phi(t;\xi,0,a)
\end{equation}
for $\xi\in S^{N-1}$.
For given $u_0\in X$ and $\mu\in\RR$,
  letting $u_0^{\xi,\mu}(x)=e^{-\mu x\cdot
\xi}u_0(x)$, then $u_0^{\xi,\mu}\in X(|\mu|)$. By the uniqueness of
solutions of \eqref{linear-new-eq1}, we have that
for given $u_0\in X$, $\xi\in S^{N-1}$, and $\mu\in\RR$,
\begin{equation}
\label{convex-eq1}
\Phi(t;\xi,\mu,a)u_0=e^{\mu x\cdot
\xi}\tilde \Phi(t;a)u_0^{\xi,\mu}.
\end{equation}

 Next, observe that for each
$x\in\RR^N$, there is a measure $m(x;y,dy)$ such that
\begin{equation}
\label{measure-eq1}
 (\tilde\Phi(T;a)u_0)(x)=\int_{\RR^N}u_0(y)m(x;y,dy).
\end{equation}
Moreover, by $(\tilde\Phi(T;a)u_0(\cdot-p_ie_i))(x)=(\tilde \Phi(T;a) u_0(\cdot))(x-p_ie_i)$ for $x\in\RR^N$ and
$i=1,2,\cdots,N$,
\begin{align*}
 \int_{\RR^N} u_0(y)m(x-p_ie_i;y,dy)&=\int_{\RR^N}
u_0(y-p_ie_i)m(x;y,dy)\\
&=\int_{\RR^N} u_0(y)m(x;y+p_ie_i,dy)
\end{align*}
and hence
\begin{equation}
\label{measure-eq2} m(x-p_ie_i;y,dy)=m(x;y+p_ie_i,dy)
\end{equation}
for $i=1,2,\cdots,N$.
By \eqref{convex-eq1}, we have
$$
(\Phi(T;\xi,\mu,a)u_0)(x)=\int_{\RR^N} e^{\mu (x-y)\cdot\xi}u_0(y)m(x;y,dy),\quad u_0\in X.
$$

Let
  $\hat{\lambda}_{0}(\mu_{i}):=r(\Phi^p(T;\xi,\mu_i))$.
  By the arguments of \cite[Theorem A (2)]{ShZh0},
$$\ln[\hat{\lambda}_{0}(\mu_{1})]^{\alpha}[\hat{\lambda}_{0}(\mu_{2})]^{1-\alpha} \geq
\ln(r(\Phi^p(T;\xi,\alpha\mu_{1}+(1-\alpha)\mu_{2})).
$$
Thus, by $r(\Phi^p(T;\xi,\mu,a))=e^{\lambda_0(\xi,\mu,a)T}$, we have
$$\alpha \lambda_{0}(\xi,\mu_{1},a)+(1-\alpha)
\lambda_{0}(\xi,\mu_{2},a) \geq \lambda_{0}(\xi,\alpha\mu_{1}+(1-\alpha)\mu_{2},a),
$$
that is, $\lambda_{0}(\xi,\mu,a)$ is convex in $\mu$.

\end{proof}

For a fixed $\xi\in S^{N-1}$ and $a\in \mathcal{X}_p$, we may denote $\lambda_0(\xi,\mu,a)$ by
$\lambda(\mu)$.

\begin{proposition}
\label{PE-useful-prop} Let $\xi\in S^{N-1}$ and $a\in\mathcal{X}_p$  be given. Assume that \eqref{eigenvalue-eq} has the principal
eigenvalue $\lambda(\mu)$ for $\mu\in\RR$ and that $\lambda(0)>0$. Then we have:

\begin{itemize}
\item[(i)]  $\lambda^{'}(\mu)<\frac{\lambda(\mu)}{\mu}$ for $0<\mu<\mu^*(\xi).$

\item[(ii)] For every $\epsilon>0$, there exists some $\mu_\epsilon>0$ such that for $\mu_\epsilon < \mu <
\mu^*(\xi)$,$$ -\lambda^{'}(\mu)< -\frac{\lambda(\mu^*(\xi))}{\mu^*(\xi)}+\epsilon.
$$
\end{itemize}
\end{proposition}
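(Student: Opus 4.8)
The plan is to exploit the convexity of $\lambda(\mu)$ (Proposition \ref{PE-convex-prop}) together with the fact, established in Proposition \ref{PE-minimum-prop}, that $\mu \mapsto \lambda(\mu)/\mu$ attains its infimum over $(0,\infty)$ at an interior point $\mu^*(\xi)$. For part (i), the key observation is that $g(\mu) := \lambda(\mu)/\mu$ is differentiable on $(0,\infty)$ by the smoothness from Theorem \ref{PE-simplicity-thm}, and $g'(\mu) = \big(\mu\lambda'(\mu) - \lambda(\mu)\big)/\mu^2$. Since $g$ attains its minimum at $\mu^*(\xi)$, we have $g'(\mu) \le 0$ for $0 < \mu < \mu^*(\xi)$, which gives $\mu\lambda'(\mu) \le \lambda(\mu)$, i.e. $\lambda'(\mu) \le \lambda(\mu)/\mu$. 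To upgrade this to a strict inequality I would argue by contradiction: if $\lambda'(\mu_1) = \lambda(\mu_1)/\mu_1$ for some $\mu_1 \in (0,\mu^*(\xi))$, then the line $\ell(\mu) = \lambda'(\mu_1)\mu$ through the origin is tangent to the graph of $\lambda$ at $\mu_1$; by convexity $\lambda(\mu) \ge \ell(\mu)$ for all $\mu$, so $g(\mu) = \lambda(\mu)/\mu \ge \lambda'(\mu_1) = g(\mu_1)$ for $\mu > 0$, forcing $\mu_1$ to be a minimizer of $g$. One then needs that $\mu_1 < \mu^*(\xi)$ cannot also be a minimizer; here I would invoke the standard fact that for a convex $\lambda$ with $g(\mu) \to \infty$ at both ends of $(0,\infty)$ (shown in the proof of Proposition \ref{PE-minimum-prop}), the set of minimizers of $g$ is an interval on which $\lambda$ is affine, and at any such minimizer $\mu_0$ one has $\lambda(\mu_0) = \mu_0 \lambda'(\mu_0)$; combined with convexity this would propagate affineness of $\lambda$ down to small $\mu$ and contradict $g(\mu)\to\infty$ as $\mu\to 0^+$. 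Alternatively, and more cleanly, one shows directly that $\mu^*(\xi)$ is the \emph{smallest} minimizer: since $g(\mu)\to\infty$ as $\mu\to0^+$ and $g$ is continuous, $g$ is strictly decreasing just to the right of $0$, and convexity of $\lambda$ implies $g' $ changes sign at most once, so $g'<0$ strictly on $(0,\mu^*(\xi))$, which is exactly (i).

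For part (ii), the goal is to bound $-\lambda'(\mu)$ from above near $\mu^*(\xi)$ by $-\lambda(\mu^*(\xi))/\mu^*(\xi) + \epsilon$. The natural route is continuity of $\lambda'$: by Theorem \ref{PE-simplicity-thm}, $\lambda(\mu)$ is smooth in $\mu$, so $\lambda'$ is continuous at $\mu^*(\xi)$. At the minimizer $\mu^*(\xi)$ of $g$ we have $g'(\mu^*(\xi)) = 0$, hence $\lambda'(\mu^*(\xi)) = \lambda(\mu^*(\xi))/\mu^*(\xi)$. Therefore $-\lambda'(\mu) \to -\lambda(\mu^*(\xi))/\mu^*(\xi)$ as $\mu \to \mu^*(\xi)^-$, and for any $\epsilon > 0$ there is $\mu_\epsilon < \mu^*(\xi)$ such that $-\lambda'(\mu) < -\lambda(\mu^*(\xi))/\mu^*(\xi) + \epsilon$ for all $\mu \in (\mu_\epsilon, \mu^*(\xi))$, which is the claim.

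The main obstacle is the strictness in part (i): the inequality $\lambda'(\mu) \le \lambda(\mu)/\mu$ is immediate from the first-order condition for the minimum, but ruling out equality requires pinning down that $\mu^*(\xi)$ is the left endpoint of the (possibly degenerate) interval of minimizers of $g$, using that $g$ blows up as $\mu\to 0^+$. Everything else — differentiability, the formula for $g'$, and the limit argument in (ii) — is routine given the smoothness in Theorem \ref{PE-simplicity-thm} and the convexity in Proposition \ref{PE-convex-prop}. I would present (i) via the sign-change argument for $g'$ (convex $\lambda$ $\Rightarrow$ $g'$ vanishes on at most an interval and is negative before it), and (ii) via continuity of $\lambda'$ and the identity $\lambda'(\mu^*(\xi)) = \lambda(\mu^*(\xi))/\mu^*(\xi)$.
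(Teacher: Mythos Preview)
Your proof is correct and takes essentially the same approach the paper invokes: the paper's own proof simply cites Theorem \ref{PE-simplicity-thm}, Propositions \ref{PE-minimum-prop}, \ref{PE-convex-prop}, and the arguments of \cite[Theorem 3.1]{ShZh0}, and your explicit argument via the quotient $g(\mu)=\lambda(\mu)/\mu$, convexity of $\lambda$, and continuity of $\lambda'$ at $\mu^*(\xi)$ is precisely how those ingredients are meant to be combined. Your attention to the strictness in (i)---resolved by observing that $h(\mu)=\mu\lambda'(\mu)-\lambda(\mu)$ satisfies $h'(\mu)=\mu\lambda''(\mu)\ge 0$, so $g'=h/\mu^2$ is strictly negative to the left of the smallest minimizer---is the correct way to close that gap.
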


\begin{proof}
It follows from Theorem \ref{PE-simplicity-thm}, Propositions \ref{PE-minimum-prop}, \ref{PE-convex-prop}, and
the arguments of \cite[Theorem 3.1]{ShZh0}.
\end{proof}

\begin{proposition}
\label{PE-perturbation-prop}
 For any $\epsilon>0$ and $M>0$, there are $a^\pm(\cdot,\cdot)$ satisfying the vanishing condition in Proposition \ref{PE-sufficient-prop}
  such that
$$
a(t,x)-\epsilon\le a^-(t,x)\leq a(t,x)\leq a^+(t,x)\le a(t,x)+\epsilon
$$
and
$$
|\lambda_0(\xi,\mu,a)-\lambda_0(\xi,\mu,a^\pm)|<\epsilon
$$
for $\xi\in S^{N-1}$ and $|\mu|\leq M$.
\end{proposition}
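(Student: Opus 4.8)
The plan is to construct $a^{\pm}$ by a two-step procedure: first arrange the required order relations $a - \epsilon \le a^{-} \le a \le a^{+} \le a + \epsilon$ with $a^{\pm}$ having the vanishing-condition structure of Proposition~\ref{PE-sufficient-prop}, and then invoke a continuity estimate to control $|\lambda_0(\xi,\mu,a) - \lambda_0(\xi,\mu,a^{\pm})|$ uniformly in $(\xi,\mu)$ on the compact set $S^{N-1} \times [-M,M]$. For the construction, recall $\hat a(x) = \frac{1}{T}\int_0^T a(t,x)\,dt$ is a periodic $C^N$ function (after mollifying $a$ in $x$ if necessary — note (H0) only gives $C^1$, so one should first approximate $a$ by a periodic function that is $C^N$ in $x$ and within $\epsilon/4$ of $a$ in $\mathcal{X}_p$, which is routine). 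Pick $x_0$ with $\hat a(x_0) = \max_x \hat a(x)$. Near $x_0$ the gradient of $\hat a$ vanishes already, but the higher derivatives up to order $N-1$ need not; so I would modify $a$ only in a small time-independent way near $x_0$ — e.g. subtract a nonnegative bump supported near $x_0$ that flattens $\hat a$ to order $N-1$ there, at the cost of shifting the location of the max slightly, then iterate or directly solve for a local polynomial correction. The cleanest route: let $\eta$ be a smooth cutoff equal to $1$ near $x_0$, and set $a^{-}(t,x) = a(t,x) - \delta\,\eta(x)\,g(x)$ where $g$ is chosen so that $\widehat{a^{-}}$ has a max at some nearby $x_1$ with all derivatives up to order $N-1$ vanishing at $x_1$; since we have $N$ free conditions and can adjust $\delta$ and a polynomial in $g$, this is a finite-dimensional solvability statement. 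Choosing $\delta < \epsilon/4$ keeps $a^{-}$ within $\epsilon/4$ of (the mollified) $a$ below it. Symmetrically build $a^{+}$ by adding such a bump; and one can always add a further small constant to guarantee strict inequalities and the two-sided bracket.

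The second step is the eigenvalue continuity. Once $a^{\pm}$ satisfy the vanishing condition, Proposition~\ref{PE-sufficient-prop} gives that $\lambda_0(\xi,\mu,a^{\pm})$ is the principal eigenvalue for all $\xi \in S^{N-1}$, $\mu \in \RR$. To estimate $|\lambda_0(\xi,\mu,a) - \lambda_0(\xi,\mu,a^{\pm})|$, I would use the monotonicity $\lambda_0(\xi,\mu,\cdot)$ is nondecreasing in the zeroth-order coefficient (an immediate consequence of the comparison principle, Proposition~\ref{comparison-linear-prop}, applied to the solution operators $\Phi(t;\xi,\mu,\cdot)$ and the identity $r(\Phi^p(T;\xi,\mu,a)) = e^{\lambda_0(\xi,\mu,a)T}$ from \cite[Proposition 3.10]{RaSh}), together with the elementary fact that adding a constant $c$ to the zeroth-order coefficient shifts $\lambda_0$ by exactly $c$. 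Concretely, $a^{-} \le a \le a^{+}$ gives $\lambda_0(\xi,\mu,a^{-}) \le \lambda_0(\xi,\mu,a) \le \lambda_0(\xi,\mu,a^{+})$, and $a^{+} \le a + \epsilon$, $a^{-} \ge a - \epsilon$ give $\lambda_0(\xi,\mu,a^{+}) \le \lambda_0(\xi,\mu,a) + \epsilon$ and $\lambda_0(\xi,\mu,a^{-}) \ge \lambda_0(\xi,\mu,a) - \epsilon$; combining, $|\lambda_0(\xi,\mu,a^{\pm}) - \lambda_0(\xi,\mu,a)| \le \epsilon$ for every $\xi$ and every $\mu$ (the bound is even uniform over all $\mu \in \RR$, a fortiori for $|\mu| \le M$). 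So the compactness in $(\xi,\mu)$ is not actually needed for the eigenvalue estimate once the bracketing is in place — it would only be needed if one instead argued via a qualitative continuity-of-spectrum result, in which case one covers $S^{N-1} \times [-M,M]$ by finitely many neighborhoods on which Theorem~\ref{PE-simplicity-thm} gives smoothness.

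I expect the main obstacle to be the construction step: producing $a^{\pm}$ that genuinely satisfy the vanishing condition — i.e.\ that the partial derivatives of $\widehat{a^{\pm}}$ up to order $N-1$ vanish at a maximizing point — while staying within an $\epsilon$-tube of $a$ and preserving the one-sided inequalities. The subtlety is that perturbing $a$ moves the location of the maximum of the time-average, so one cannot simply "flatten at $x_0$"; one needs an implicit-function / fixed-point argument showing that a small local modification can be tuned so that the new max location and the vanishing of the lower-order derivatives there are simultaneously achieved. A clean way to sidestep this: choose the bump to have a strict interior maximum that dominates, i.e. design $\widehat{a^{+}}$ to equal a fixed smooth function with a prescribed flat maximum (all derivatives up to order $N-1$ zero at its peak) on a small ball, glued to $\hat a$ outside, with the gluing arranged so the global max lands inside that ball; scaling the ball and the bump height by $\epsilon$ then controls the $C^0$ distance. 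The remaining details — verifying $a \pm$ inherits enough regularity, that the mollification does not spoil periodicity, and that strict two-sided inequalities can be arranged by an extra $O(\epsilon)$ constant — are routine and I would relegate them to the arguments of \cite[Theorem B]{RaSh}, to which the proof in the paper indeed appeals.
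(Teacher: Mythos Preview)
Your proposal is correct and takes essentially the same approach as the paper. The paper's proof is simply a two-line citation: it invokes \cite[Lemma 4.1]{RaSh} for the construction of $a^{\pm}$ with the vanishing condition and the bracketing $a-\epsilon \le a^{-} \le a \le a^{+} \le a+\epsilon$, and then uses the identity $\Phi^p(T;\xi,\mu,a\pm\epsilon)=e^{\pm\epsilon T}\Phi^p(T;\xi,\mu,a)$ --- which is precisely your monotonicity-plus-constant-shift argument for the eigenvalue estimate (and, as you observed, gives the bound uniformly in all $\mu$, not just $|\mu|\le M$).
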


\begin{proof}
It follows from \cite[Lemma 4.1]{RaSh} and the fact that
$$
\Phi^p(T;\xi,\mu,a\pm\epsilon)=e^{\pm\epsilon T}\Phi^p(T;\xi,\mu,a).
$$
\end{proof}

\section{Spreading Speeds}

In this section, we investigate the existence and characterization of the spreading speeds of
\eqref{main-eq}.

Throughout this section, we assume (H1) and (H2).  $u(t,x;u_0)$ denotes the solution of \eqref{main-eq} with $u(0,x;u_0)=u_0(x)$.
As  mentioned in the introduction, by \cite[Theorem E]{RaSh},
\eqref{main-eq} has a unique positive periodic solution $u^*(\cdot,\cdot)\in \mathcal{X}_p^+$.

We first recall the notion of spreading speed intervals and spreading speeds introduced in \cite{ShZh0}.

\begin{definition}
\label{spreading-speed-interval-def}
 For a given
 vector $\xi\in S^{N-1}$, let
\begin{equation*}
C_{\rm inf}^*(\xi)= \Big\{\;c\;|\;\forall\; u_0\in X^+(\xi),\,\limsup_{t\to\infty}\sup_{x\cdot\xi \leq ct}|u(t,x;u_0)-u^*(t,x)|= 0\Big\}
\end{equation*}
and
\begin{equation*}
C_{\rm sup}^*(\xi)=\Big\{\;c\;|\; \forall\; u_0\in X^+(\xi),\,
  \limsup_{t\to\infty}\sup_{x\cdot\xi \geq ct}u(t,x;u_0)=0\Big\}.
\end{equation*}

Define
\begin{eqnarray*}
c_{\rm inf}^*(\xi)=\sup\;\{\;c\;|\;c\in C_{\rm inf}^*(\xi)\},\quad c_{\rm sup}^*(\xi)=\inf\;\{\;c\;|\;c\in
C_{\rm sup}^*(\xi)\}.
\end{eqnarray*}
We call $[c_{\rm inf}^*(\xi),c_{\rm sup}^*(\xi)]$  the {\rm spreading speed interval} of \eqref{main-eq} in the
direction of $\xi$.
If $c_{\rm inf}^*(\xi)=c_{\rm sup}^*(\xi)$, we call $c^*(\xi):=c_{\rm inf}^*(\xi)$ the {\rm spreading speed} of \eqref{main-eq}
in the direction of $\xi$.
\end{definition}

Observe  that $X^+(\xi)$ is not empty (see \eqref{X-space-in-direction-xi}). If $c_1\in C_{\inf}^*(\xi)$ and $c_2\in C_{\sup}^*(\xi)$, then $c_1<c_2$, and
for any $c^{'}<c_1$ and $c^{''}>c_2$, $c^{'}\in C_{\inf}^*(\xi)$ and $c^{''}\in C_{\sup}^*(\xi)$. By the arguments in \cite[Corollary 4.1]{ShZh0},
both $C_{\inf}^*(\xi)$ and $C_{\sup}^*(\xi)$ are not empty. Hence $[c_{\inf}^*(\xi),c_{\sup}^*(\xi)]$ is well defined.

The main results of this section are stated in the following theorems.

 \begin{theorem} [Existence of spreading speeds]
 \label{spreading-thm1}
 Assume (H1) and (H2). For any given $\xi\in S^{N-1}$,
 $c_{\inf}^*(\xi)=c_{\sup}^*(\xi)$ and hence the spreading speed  $c^*(\xi)$ of \eqref{main-eq} in the direction of $\xi$ exists.
 Moreover,
 $$
 c^*(\xi)=\inf_{\mu>0}\frac{\lambda_0(\xi,\mu,a_0)}{\mu},
 $$
 where $a_0(t,x)=f(t,x,0)$.
 \end{theorem}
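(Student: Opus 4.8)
Set $c^\star:=\inf_{\mu>0}\lambda_0(\xi,\mu,a_0)/\mu$. The excerpt already records that any $c_1\in C_{\inf}^*(\xi)$ and $c_2\in C_{\sup}^*(\xi)$ satisfy $c_1<c_2$, so $c_{\inf}^*(\xi)\le c_{\sup}^*(\xi)$; it therefore suffices to prove the two one-sided estimates $c_{\sup}^*(\xi)\le c^\star$ and $c_{\inf}^*(\xi)\ge c^\star$, which together pinch $c_{\inf}^*(\xi)=c_{\sup}^*(\xi)=c^\star$. All comparisons rest on Proposition \ref{comparison-nonlinear-prop} together with two elementary consequences of (H1) and the continuity of $f$: $u\,f(t,x,u)\le a_0(t,x)\,u$ for every $u\ge 0$, and, for each $\epsilon>0$, $u\,f(t,x,u)\ge (a_0(t,x)-\epsilon)\,u$ whenever $0\le u\le\delta(\epsilon)$.

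\noindent\textbf{Upper estimate} $c_{\sup}^*(\xi)\le c^\star$. Fix $\epsilon>0$ and $M>\mu^*(\xi)$. By Proposition \ref{PE-perturbation-prop} choose $a_0^+\ge a_0$ satisfying the vanishing condition of Proposition \ref{PE-sufficient-prop}; then $-\p_t+\mathcal K_{\xi,\mu}-I+a_0^+I$ has principal eigenvalue $\lambda^+(\mu):=\lambda_0(\xi,\mu,a_0^+)$ with $\lambda^+(\mu)\le\lambda_0(\xi,\mu,a_0)+\epsilon$ for $|\mu|\le M$, and a positive eigenfunction $\phi^+(\cdot,\cdot;\xi,\mu)$ smooth in $\mu$ (Theorem \ref{PE-simplicity-thm}). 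A direct computation using the eigenvalue identity shows that $\bar u(t,x):=A\,e^{-\mu\,x\cdot\xi+\lambda^+(\mu)t}\phi^+(t,x;\xi,\mu)$ solves $\bar u_t=\int_{\RR^N}k(y-x)\bar u(t,y)\,dy-\bar u+a_0^+(t,x)\bar u$, hence is a super-solution of \eqref{main-eq}. Given $u_0\in X^+(\xi)$ we have $u_0\equiv 0$ on $\{x\cdot\xi\ge R\}$, so $u_0(x)\le\|u_0\|e^{\mu R}e^{-\mu\,x\cdot\xi}$, and choosing $A$ large makes $\bar u(0,\cdot)\ge u_0$. By Proposition \ref{comparison-nonlinear-prop}, $u(t,x;u_0)\le\bar u(t,x)$; since $\bar u(t,x)\to 0$ uniformly on $\{x\cdot\xi\ge ct\}$ whenever $c>\lambda^+(\mu)/\mu$, this gives $c\in C_{\sup}^*(\xi)$ for all such $c$. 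Minimizing over $\mu>0$ (a minimum exists by Proposition \ref{PE-minimum-prop}) and letting $\epsilon\downarrow 0$ yields $c_{\sup}^*(\xi)\le c^\star$.

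\noindent\textbf{Lower estimate} $c_{\inf}^*(\xi)\ge c^\star$. Fix $c<c^\star$ and $\epsilon>0$ small enough that $c<\inf_{\mu>0}(\lambda_0(\xi,\mu,a_0)-2\epsilon)/\mu$. Apply Proposition \ref{PE-perturbation-prop} to $a_0-\epsilon$ to get $a^-\le a_0-\epsilon$ with the vanishing condition, hence principal eigenvalue $\lambda^-(\mu):=\lambda_0(\xi,\mu,a^-)$, positive smooth eigenfunction $\phi^-$ (Theorem \ref{PE-simplicity-thm}), and $u\,f(t,x,u)\ge a^-(t,x)\,u$ for $0\le u\le\delta(\epsilon)$; in particular a solution of \eqref{main-eq} is, while it remains in $[0,\delta(\epsilon)]$, a super-solution of the linear equation \eqref{linear-new-eq1} with coefficient $a=a^-$. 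Using the convexity of $\lambda^-$ (Proposition \ref{PE-convex-prop}), the minimizer $\mu^-$ of $\lambda^-(\mu)/\mu$ (Proposition \ref{PE-minimum-prop}), and the derivative inequalities of Proposition \ref{PE-useful-prop}, one builds, for suitable small $\kappa,d>0$, a moving ``hump'' of the form $\underline u(t,x)=\big(d-d\,e^{\kappa(x\cdot\xi-ct)}\big)e^{-\mu(x\cdot\xi-ct)}\phi^-(t,x;\xi,\mu)$, truncated to the region where $0<\underline u\le\delta(\epsilon)$, that is a sub-solution of \eqref{main-eq}. Comparing $u(t,x;u_0)$ with translates of $\underline u$ — using that $u(t,\cdot;u_0)$ is already bounded below by a positive constant on $\{x\cdot\xi\le R_0\}$ for large $t$, since $u_0\in X^+(\xi)$ and $0$ is linearly unstable — one obtains a $\delta_0>0$ (independent of $c$) with $\liminf_{t\to\infty}\inf_{x\cdot\xi\le ct}u(t,x;u_0)\ge\delta_0$. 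Finally, (H1) gives a constant $M_0$ with $u(t,\cdot;u_0)\le u(t,\cdot;M_0)\downarrow u^*(t,\cdot)$ (Proposition \ref{comparison-nonlinear-prop} and \cite[Theorem E]{RaSh}), so $\limsup_{t\to\infty}\sup_{x}(u(t,x;u_0)-u^*(t,x))\le 0$; on the other hand, the uniform lower bound $\delta_0$ on the expanding cones together with the asymptotic stability of $u^*$ with respect to positive perturbations (\cite{RaSh}, implemented via comparison with space-homogeneous sub/super-solutions on large balls and the time--space periodicity) yields, for every $c'<c$ and $\epsilon'>0$, $u(t,x;u_0)\ge u^*(t,x)-\epsilon'$ on $\{x\cdot\xi\le c't\}$ for $t$ large. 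Combining, $\limsup_{t\to\infty}\sup_{x\cdot\xi\le c't}|u(t,x;u_0)-u^*(t,x)|=0$ for all $c'<c^\star$, i.e.\ $c_{\inf}^*(\xi)\ge c^\star$.

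\noindent\textbf{Main obstacle.} The technical heart is the last step of the lower estimate: upgrading the crude bound $u\ge\delta_0$ on expanding cones to locally uniform, and then cone-uniform, convergence to $u^*$. Because the solution maps of \eqref{main-eq} lack compactness, the abstract monotone-semiflow theory of \cite{LiZh1} and \cite{Wei2} does not apply, and this convergence must be produced by explicit comparison with $u^*$ and with space-homogeneous sub/super-solutions, exploiting periodicity in $t$ and $x$. A second, pervasive difficulty is that $-\p_t+\mathcal K_{\xi,\mu}-I+a_0 I$ may fail to have a principal eigenvalue, which is why Proposition \ref{PE-perturbation-prop} must be used to replace $a_0$ by $a_0^\pm$ throughout both estimates, with the limits $\epsilon\downarrow 0$ taken only at the end.
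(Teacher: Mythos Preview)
Your global architecture --- prove $c_{\sup}^*(\xi)\le c^\star$ and $c_{\inf}^*(\xi)\ge c^\star$ after replacing $a_0$ by approximants $a_0^\pm$ satisfying the vanishing condition of Proposition~\ref{PE-sufficient-prop} (via Proposition~\ref{PE-perturbation-prop}), then let $\epsilon\downarrow 0$ --- is exactly the paper's, and your upper estimate is essentially Lemma~\ref{main-lm3} combined with that approximation step.

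The gap is in the lower estimate: the hump you write down, $\underline u=d\bigl(1-e^{\kappa\eta}\bigr)e^{-\mu\eta}\phi^-(t,x;\xi,\mu)$ with $\eta=x\cdot\xi-ct$, is \emph{not} a sub-solution of the linear equation in the moving frame when $c<c_*^-:=\inf_{\mu'>0}\lambda^-(\mu')/\mu'$. Since $c<\lambda^-(\mu')/\mu'$ for \emph{every} $\mu'>0$, each exponential piece is itself a strict sub-solution, so their difference has no sign control; concretely, on the positivity region $\eta<0$ one finds $(L-\p_t)\underline u=d(\lambda^-(\mu)-c\mu)(v_1-v_2)-d\kappa\,C(t,x)\,v_2+d\,R_\kappa$, where $R_\kappa$ comes from the fact that $\phi^-(\cdot;\xi,\mu)$ is \emph{not} the eigenfunction of $\mathcal K_{\xi,\mu-\kappa}$, and the last two terms need not be dominated by the first near $\eta=0$. (The two-exponential device $e^{-\mu\eta}\phi(\mu)-de^{-\mu_1\eta}\phi(\mu_1)$ works only when one can choose $c=\lambda(\mu)/\mu$ exactly, i.e.\ for $c>c^*(\xi)$, which is why it appears in Section~5 for traveling waves but not here.) The paper's Lemma~\ref{main-lm4} instead follows Weinberger: it passes to the time-$T$ map through its integral kernel $m(x;y,dy)$ (see \eqref{measure-eq1}), builds the compactly supported sine hump $v(s,z)=\epsilon_2\phi(\mu,z)e^{-\mu s}\sin\gamma(s-\kappa(\mu,z))$ with $\kappa=\phi_\mu/\phi$, and uses the derivative inequalities of Proposition~\ref{PE-useful-prop} together with the tail estimate Lemma~\ref{main-lm2} to show that this hump advances by at least $c^\star-2\epsilon_1$ per period. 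Iteration yields the uniform lower bound on the expanding cones, and Lemma~\ref{main-lm1} --- the paper's encapsulation of precisely the obstacle you flag at the end --- upgrades that bound to convergence to $u^*$.
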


\begin{theorem}[Spreading features of  spreading speeds]
\label{spreading-thm2}
  Assume (H1) and (H2).
\begin{itemize}
\item[(1)] If $ u_0\in X^+$ satisfies that  $u_0(x)=0$ for $x\in\RR^N$ with $|x\cdot\xi|\gg 1$, then
$$
\limsup_{t\to\infty}\sup_{x\cdot\xi\ge ct}u(t,x;u_0)=0\quad \forall\,\, c>c^*(\xi)
$$
and
$$\limsup_{t\to\infty}\sup_{x\cdot\xi\le - ct}u(t,x;u_0)=0\quad \forall\,\, c>c^*(-\xi).$$

\item[(2)]  For any $\sigma>0$,  $r>0$, and $u_0\in X^+$ satisfying $u_0(x)\geq\sigma$ for all $x\in\RR^N$ with $|x\cdot \xi|\leq r$
$$\liminf_{t\to\infty}\inf_{0\le x\cdot \xi\leq ct}(u(t,x;u_0)-u^*(t,x))=0 \quad \forall\,\, 0<c<c^*(\xi)$$
and
$$\liminf_{t\to\infty}\inf_{-ct\le x\cdot \xi\leq 0}(u(t,x;u_0)-u^*(t,x))=0 \quad \forall \,\, 0<c<c^*(-\xi).$$

\item[(3)] If $u_0\in X^+$ satisfies that  $u_0(x)=0$ for $x\in\RR^N$ with $\|x\|\gg 1$, then
$$\limsup_{t\to\infty}\sup_{\|x\|\geq ct}u(t,x;u_0)=0$$
for all $c>\sup_{\xi\in S^{N-1}}c^*(\xi)$.

\item[(4)]  Assume that
$0<c<\inf_{\xi\in S^{N-1}}\{c^*(\xi)\}$. Then for any $\sigma>0$ and $r>0$,
$$\liminf_{t\to\infty}\inf_{\|x\|\leq ct}(u(t,x;u_0)-u^*(t,x))=0 $$ for every $u_0 \in X^+$ satisfying
 $u_0(x)\geq\sigma$ for $x\in\RR^N$ with  $\|x\|\leq r$.
\end{itemize}
\end{theorem}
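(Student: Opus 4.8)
The plan is to deduce all four statements from the comparison principle (Propositions~\ref{comparison-linear-prop} and \ref{comparison-nonlinear-prop}, Remarks~\ref{weighted-space-rk} and \ref{general-space-rk}) together with explicit exponentially weighted super- and sub-solutions built from principal eigenfunctions (Theorem~\ref{PE-simplicity-thm}), the approximation Proposition~\ref{PE-perturbation-prop} (which lets us bypass the \emph{a priori} existence of a principal eigenvalue of \eqref{eigenvalue-eq0}), and the variational formula $c^*(\xi)=\inf_{\mu>0}\lambda_0(\xi,\mu,a_0)/\mu$ from Theorem~\ref{spreading-thm1}. Throughout I use that (H1) gives $f(t,x,u)\le f(t,x,0)=a_0(t,x)$ for $u\ge 0$, so every nonnegative solution of \eqref{main-eq} is a sub-solution of the linearization \eqref{linearization-eq0}, and conversely that for $0\le u\le\delta$ with $\delta$ small one has $uf(t,x,u)\ge u(a_0(t,x)-\epsilon)$.

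\textbf{Upper estimates (1) and (3).} Fix $\xi$ and $c>c^*(\xi)$, pick $\mu>0$ with $\lambda_0(\xi,\mu,a_0)/\mu<c$, and use Proposition~\ref{PE-perturbation-prop} to obtain $a^+\ge a_0$ satisfying the vanishing condition (so by Proposition~\ref{PE-sufficient-prop} $\lambda^+:=\lambda_0(\xi,\mu,a^+)$ is a principal eigenvalue with positive eigenfunction $\phi^+\in\mathcal X_p^+$) and with $\lambda^+/\mu<c$ still. Then $v(t,x)=Me^{-\mu(x\cdot\xi-(\lambda^+/\mu)t)}\phi^+(t,x)$ solves the linear equation with coefficient $a^+$, hence is a super-solution of \eqref{main-eq}; choosing $M$ large (using boundedness of $u_0$, its vanishing for $x\cdot\xi\gg 1$, and periodicity and positivity of $\phi^+$) yields $v(0,\cdot)\ge u_0$, so $u(t,x;u_0)\le v(t,x)$ and $\sup_{x\cdot\xi\ge ct}u(t,x;u_0)\le M\|\phi^+\|e^{-\mu(c-\lambda^+/\mu)t}\to 0$. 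The $-\xi$ estimate is symmetric, giving (1). For (3), Proposition~\ref{PE-perturbation-prop} furnishes a single $a^+$ valid for all $\xi\in S^{N-1}$ and all $|\mu|\le M$; since $\lambda_0(\xi,\mu,a^+)$ is then a principal eigenvalue for each such $\xi$ it is continuous in $\xi$ by Theorem~\ref{PE-simplicity-thm}, so $\xi\mapsto\inf_{0<\mu\le M}\lambda_0(\xi,\mu,a^+)/\mu$ is upper semicontinuous; covering the compact sphere by finitely many neighbourhoods on each of which one weighted super-solution controls $u$, writing $\{\|x\|\ge ct\}=\bigcup_{\xi\in S^{N-1}}\{x\cdot\xi\ge ct\}$, and using $c>\sup_\xi c^*(\xi)$ to leave room in the covering, one obtains (3).

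\textbf{Convergence statements (2) and (4).} The upper bound is immediate: comparing $u_0$ with the constant datum $\max\{\|u_0\|,\max u^*\}$, which is a super-solution of \eqref{main-eq} decreasing to $u^*$ by (H1) and the stability of $u^*$ from \cite{RaSh}, gives $\limsup_{t\to\infty}\sup_x(u(t,x;u_0)-u^*(t,x))\le 0$. For the lower bound it suffices, by comparison, to take $u_0$ equal to a small continuous bump $\psi\le u_0$ ($\psi=\sigma$ on $\{|x\cdot\xi|\le r/2\}$, resp. $\{\|x\|\le r/2\}$, and $\psi=0$ outside $\{|x\cdot\xi|\le r\}$, resp. $\{\|x\|\le r\}$), and to show that for $0<c<c^*(\xi)$ (resp. $0<c<\inf_\xi c^*(\xi)$) one has $u(t,x;\psi)\ge u^*(t,x)-o(1)$ uniformly on $\{0\le x\cdot\xi\le ct\}$ (resp. $\{\|x\|\le ct\}$). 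I would do this in two steps. First, a \emph{localization} step: using the stability of $u^*$ in $\mathcal X_p^+$ and the exponential smallness of the nonlocal coupling across a wide strip, it suffices to prove that for one fixed small $\delta_0>0$, $u(t,x;\psi)\ge\delta_0$ on the relevant moving strip/ball for all large $t$. Second, a \emph{spreading} step: since $c<c^*(\xi)$ forces $\lambda_0(\xi,\mu,a_0)>c\mu$ for every $\mu>0$, and $\lambda_0(\xi,0,a_0)=\lambda_0(a_0)>0$ by (H2), one first shows the bump "widens'' — for every $R>0$ there is $\tau(R)$ with $u(\tau,\cdot;\psi)\ge\delta_1$ on $\{|x\cdot\xi|\le R\}$ (resp. $\{\|x\|\le R\}$), using $\lambda_0(a_0)>0$ and Proposition~\ref{PE-perturbation-prop} to produce a positive principal eigenfunction $\phi^-$ of a slightly smaller coefficient $a^-\le a_0$ — and then constructs from $\phi^-$ an exponential (two-exponential) sub-solution of \eqref{main-eq} propagating at speed $c$, whose existence rests precisely on $\lambda_0(\xi,\mu,a^-)>c\mu$ and on Theorem~\ref{PE-simplicity-thm} and Proposition~\ref{PE-useful-prop}; comparison with $u(\tau,\cdot;\psi)$ then yields the level $\delta_0$ on the moving strip/ball. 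Combining the two steps gives (2) and (4), with the compactness of $S^{N-1}$ used in (4) to make the radial estimate uniform as in (3).

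\textbf{Main obstacle.} The hard part is the lower bound in (2) and (4). Because the solution operators lack compactness (the reason, noted in the introduction, that the abstract machinery of \cite{LiZh1,Wei2} does not apply), the spreading of strip/compactly supported data cannot be reduced to the front-type data of Theorem~\ref{spreading-thm1} by a simple comparison, and must be proved by the hands-on moving sub-solution above; making the "bump widening'' and the localization quantitative requires uniform control of the nonlocal dispersal tails, and the moving sub-solution must be built compatibly with the space-periodicity of $a^-$, which is exactly where the algebraic simplicity and smoothness of the principal eigenvalue (Theorem~\ref{PE-simplicity-thm}) and the approximation Proposition~\ref{PE-perturbation-prop} are essential. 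The remaining estimates parallel those of \cite[Section~4]{ShZh0}, now carried out in the space--time periodic setting.
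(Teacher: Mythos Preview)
Your overall plan is sound and matches what the paper does (which is to defer to \cite[Theorem D,E]{ShZh0} and \cite[Theorem E]{ShZh2}); two comments.

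For (1) you work harder than necessary. The paper simply invokes the \emph{definition} of $c^*(\pm\xi)$: since $u_0$ vanishes for $x\cdot\xi\gg 1$, one may pick any $\tilde u_0\in X^+(\xi)$ with $\tilde u_0\ge u_0$, and then by Proposition~\ref{comparison-nonlinear-prop} and Theorem~\ref{spreading-thm1} one has $u(t,x;u_0)\le u(t,x;\tilde u_0)\to 0$ on $\{x\cdot\xi\ge ct\}$ whenever $c>c^*(\xi)=c^*_{\sup}(\xi)$; symmetrically for $-\xi$. Your explicit super-solution construction is correct but simply reproves Lemma~\ref{main-lm3}.

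For (2) and (4) your two-step scheme (localization plus moving sub-solution) is exactly the right strategy; it corresponds to Lemma~\ref{main-lm1}(2),(3) combined with the construction of Lemma~\ref{main-lm4}. One correction of detail: the ``two-exponential'' sub-solution you mention is the one used in Section~5 for traveling waves (Proposition~\ref{sub-solution-prop1}), not the one needed here. The spreading argument requires the Weinberger-type bump $v(s,z)=\epsilon_2\phi(\mu,z)e^{-\mu s}\sin\gamma(s-\kappa(\mu,z))$ of Lemma~\ref{main-lm4}, which is \emph{compactly supported} in $x\cdot\xi$ and can therefore be slipped under a slab or ball datum; the two-sided statement in (2) then comes from running this bump in both directions $\pm\xi$, and (4) from all directions via compactness of $S^{N-1}$, as you indicate. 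Your covering argument for (3) is fine once you note that the constant $M$ can be taken uniform in $\xi$ because $u_0$ has compact support and the periodic eigenfunctions $\phi^+$ are uniformly positive.
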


To prove the above theorems, we first prove some lemmas.

Consider the space shifted equations of \eqref{main-eq},
\begin{equation}
\label{main-shifted-eq}
 \frac{\p u}{\p
 t}=\int_{\RR^N}k(y-x)u(t,y)dy-u(t,x)+u(t,x)f(t, x+z,u(t,x)),\quad
 x\in\RR^N,
\end{equation}
where $z\in\RR^N$. Let $u(t,x;u_0,z)$ be the solution of \eqref{main-shifted-eq} with
$u(0,x;u_0,z)=u_0(x)$ for $u_0\in X$.

\begin{lemma}
\label{main-lm1}
\begin{itemize}
\item[(1)]
Let $\xi\in S^{N-1}$,
 $u_0(\cdot,z)\in \tilde X^+$ with $\ds\liminf _{x\cdot \xi\to -\infty}u_0(x,z)>0$ and
$\ds\limsup_{x\cdot\xi\to\infty}u_0(x,z)=0$ for all $z\in\RR^N$, and $c\in\RR$ be given.
 If there is
$\delta_0$  such that
\begin{equation}
\label{global-eq1} \liminf_{x\cdot\xi\leq cnT,n\to\infty}u(nT,x;u_0(\cdot,z),z)\geq \delta_0\quad \text{\rm uniformly
in}\quad  z\in\RR^N,
\end{equation}
 then for every $c^{'}<c$,
$$
\liminf_{x\cdot\xi\leq c^{'}t,t\to\infty}(u(t,x;u_0(\cdot,z),z)-u^*(t,x+z))=0 \quad \text{\rm uniformly in}\quad
z\in\RR^N.
$$

\item[(2)] Let $c\in\RR$ and
$u_0(\cdot,z)\in \tilde X$ with $u_0(\cdot,z)\geq 0$ ($z\in\RR^N$) be given. If there is $\delta_0$  such that
\begin{equation}
\label{global-eq2} \liminf_{|x\cdot\xi|\leq cnT,n\to\infty}u(nT,x;u_0(\cdot,z),z)\geq \delta_0 \quad \text{\rm
uniformly in}\quad  z\in\RR^N,
\end{equation}
 then for every $c^{'}<c$,
$$
\limsup_{|x\cdot\xi|\leq c^{'}t,t\to\infty} |u(t,x;u_0(\cdot,z),z)-u^*(t,x+z)|=0 \quad \text{\rm uniformly in}\quad
z\in\RR^N.
$$

\item[(3)]  Let $c\in\RR$ and
$u_0(\cdot,z)\in \tilde X$ with $u_0(\cdot,z)\geq 0$ ($z\in\RR^N$) be given. If there is $\delta_0$  such that
\begin{equation}
\label{global-eq3} \liminf_{\|x\|\leq cnT,n\to\infty}u(nT,x;u_0(\cdot,z),z)\geq \delta_0 \quad \text{\rm uniformly
in}\quad  z\in\RR^N,
\end{equation}
then for every $c^{'}<c$,
$$
\limsup_{\|x\|\leq c^{'}t,t\to\infty} |u(t,x;u_0(\cdot,z),z)-u^*(t,x+z)|=0 \quad \text{\rm uniformly in}\quad  z\in\RR^N.
$$
\end{itemize}
\end{lemma}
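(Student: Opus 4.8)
The plan is to reduce all three parts to a single mechanism: once the solution at the discrete times $nT$ is bounded below by a positive constant $\delta_0$ on an expanding region (uniformly in the spatial shift $z$), the stability of the periodic state $u^*$ propagates this lower bound to a full convergence statement on a slightly smaller region. The three parts differ only in the geometry of the region ($\{x\cdot\xi\le cnT\}$, $\{|x\cdot\xi|\le cnT\}$, $\{\|x\|\le cnT\}$), so I would prove (1) in detail and indicate the trivial modifications for (2) and (3). Throughout I work with the shifted equation \eqref{main-shifted-eq} and its solution $u(t,x;u_0(\cdot,z),z)$, keeping careful track of uniformity in $z\in\RR^N$.

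First I would fix $c'<c$ and choose an intermediate speed $c'<c''<c$. By the hypothesis \eqref{global-eq1}, for all large $n$ we have $u(nT,x;u_0(\cdot,z),z)\ge \delta_0/2$ whenever $x\cdot\xi\le c''nT$, uniformly in $z$. The key comparison step is then to build, from this lower bound, a time-independent (in the periodic sense) sub-solution: for fixed large $n_0$, consider starting the equation at time $n_0T$ from the function $\underline v(x)=\tfrac{\delta_0}{2}\mathbf 1_{\{x\cdot\xi\le c''n_0T\}}$ (a bounded measurable sub-datum — here Remark \ref{general-space-rk} is used so that Proposition \ref{comparison-nonlinear-prop}(1),(3) apply to measurable data), and let $\underline u(t,x;z)$ be the solution of \eqref{main-shifted-eq} with this initial value at $t=n_0T$. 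By the comparison principle, $u(t,x;u_0(\cdot,z),z)\ge \underline u(t,x;z)$ for $t\ge n_0T$. The region where $\underline v\ge\delta_0/2$ is a half-space whose boundary recedes at speed $c''$ relative to a reference moving at speed $c'<c''$, so along the moving frame $x\cdot\xi\le c't$ the sub-solution feels initial data bounded below on a region that grows without bound. By the uniform asymptotic stability of $u^*$ with respect to positive perturbations in $X_p^+$ (from \cite{RaSh}, quoted in the introduction), together with the space-periodicity of $f(\cdot,\cdot+z,\cdot)$ in the periodic cell, $\underline u(nT,x;z)\to u^*(nT,x+z)$ as $n\to\infty$ uniformly for $x\cdot\xi\le c'nT$ and uniformly in $z$. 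Since we also have the upper bound $u(t,x;u_0(\cdot,z),z)\le u^*(t,x+z)$ eventually (or at least $\limsup \le u^*$, by comparison with the spatially homogenized super-solution given by the ODE $\dot w = w f_{\max}(w)$ and monostability (H1)), squeezing gives the claimed convergence $\liminf_{x\cdot\xi\le c't,\,t\to\infty}\big(u(t,x;u_0(\cdot,z),z)-u^*(t,x+z)\big)=0$, uniformly in $z$. Finally I would pass from the discrete times $nT$ to all $t\to\infty$: writing $t=nT+s$ with $s\in[0,T)$, continuity of the solution map on $[0,T]$ together with periodicity of $u^*$ and the uniform-in-$z$ estimates upgrades the $nT$-statement to a full $t$-statement.

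For part (2) the region is symmetric, $\{|x\cdot\xi|\le cnT\}$; the sub-datum becomes $\tfrac{\delta_0}{2}\mathbf 1_{\{|x\cdot\xi|\le c''n_0T\}}$, a slab, and the same squeeze between the sub-solution (driven up toward $u^*$ by stability) and the super-solution (bounded above by $u^*$) yields the two-sided estimate $\limsup|u-u^*|=0$ on $\{|x\cdot\xi|\le c't\}$. Part (3) is identical with the Euclidean ball $\{\|x\|\le cnT\}$ replacing the slab; nothing in the argument uses the planar geometry of the region beyond the fact that its "inradius" grows linearly at rate faster than $c'$. The main obstacle I anticipate is not the comparison bookkeeping but securing the uniformity in $z$ of the stability convergence: one must know that the basin-of-attraction estimate for $u^*$ under \eqref{main-shifted-eq} is uniform over the one-parameter family of shifted nonlinearities $f(\cdot,\cdot+z,\cdot)$, which follows because by periodicity this is really a compact family (parametrized by $z$ in the torus $\RR^N/\prod_i p_i\ZZ$) and the stability estimates from \cite{RaSh} depend continuously on the nonlinearity; making this quantitative and patching it with the moving-frame geometry is where the real work lies.
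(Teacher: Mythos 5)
Your overall strategy---comparison from below plus the stability of $u^*$---has the right shape, but there is a genuine gap in how you set up the sub-solution. You fix a single large $n_0$, start the comparison at time $n_0T$ from the \emph{fixed} datum $\tfrac{\delta_0}{2}\mathbf 1_{\{x\cdot\xi\le c''n_0T\}}$, and then let $t\to\infty$. The support of that datum is a fixed half-space; it does not ``recede at speed $c''$.'' Meanwhile the observation window $\{x\cdot\xi\le c't\}$ expands without bound, so it eventually escapes the initial positivity region. From then on, your sub-solution tracks $u^*$ on the observation window only if front-like initial data spreads at speed at least $c'$---but that is precisely (part of) the spreading-speed property this lemma exists to establish, so presupposing it is circular. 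Relatedly, the ``uniform asymptotic stability of $u^*$ with respect to positive perturbations in $X_p^+$'' quoted from \cite{RaSh} is a statement about \emph{space-periodic} perturbations; your sub-datum is a half-space truncation, not in $X_p^+$, so that stability result by itself does not yield convergence of $\underline u(nT,\cdot;z)$ to $u^*(nT,\cdot+z)$ on $\{x\cdot\xi\le c'nT\}$.

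The repair is a two-parameter, diagonal version of your plan that re-invokes \eqref{global-eq1} at \emph{each} time $nT$ rather than once. Fix $\epsilon>0$. By stability of $u^*$ and instability of $0$ in $X_p^+$, there is $m=m_\epsilon$ such that the solution started from the constant $\delta_0$ lies within $\epsilon$ of $u^*$ after $m$ periods, uniformly in the shift $z$ (periodicity gives this uniformity). Because $k$ has compact support, the nonlocal semigroup has exponentially small influence far from the support of the data over any bounded time interval (this is essentially Lemma \ref{main-lm2}), so there is $R=R_\epsilon$ such that truncating the constant datum to a half-space $\{x\cdot\xi\le L\}$ perturbs the solution by at most $\epsilon$ on $\{x\cdot\xi\le L-R\}$ over those $m$ periods. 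Now for each large $n$ apply \eqref{global-eq1} at time $nT$ to get $u(nT,\cdot)\ge\delta_0/2$ on $\{x\cdot\xi\le cnT\}$, compare over the next $m$ periods, and conclude $u((n+m)T,x)\ge u^*((n+m)T,x+z)-2\epsilon$ on $\{x\cdot\xi\le cnT-R\}$. Since $c'<c$, one has $c'(n+m)T\le cnT-R$ for all $n$ large, and the claim follows after interpolating between multiples of $T$. The single restart at a fixed $n_0$ is exactly what lets the observation window outrun the comparison; the hypothesis must keep pace with it.
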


\begin{proof}
It follows from the arguments of \cite[Proposition 4.4]{ShZh0}.
\end{proof}

\begin{lemma}
\label{main-lm2}
$$
\int_{\|y-x\|\geq B}e^{\mu\|y-x\|}m(x;y,dy)\to 0\quad {\rm as}\quad B\to\infty
$$
uniformly for $\mu$ in bounded sets and for $x\in\RR^N$.
\end{lemma}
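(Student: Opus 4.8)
The plan is to exploit that the solution operator $\tilde\Phi(T;a)$ — and hence its kernel $m(x;y,dy)$ from \eqref{measure-eq1} — is dominated, up to the bounded factor $e^{MT}$ with $M:=\|a\|_{\mathcal{X}_p}$, by the kernel of the semigroup generated by the pure nonlocal dispersal operator $\mathcal{K}-I$, which is an explicit Poisson-weighted sum of convolution powers of the compactly supported kernel $k$. Because ${\rm supp}\,k\subset\{\|z\|\le r_0\}$, the $n$-th convolution power $k^{*n}$ is supported in $\{\|z\|\le nr_0\}$, so the portion of $m(x;y,dy)$ carried to distance $\ge B$ only involves the terms with $nr_0\ge B$, while the Poisson weights $T^n/n!$ decay super-exponentially in $n$; this decay beats the fixed exponential weight $e^{\mu\|y-x\|}$ and yields the claim.

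First I would establish the kernel domination. Since $\mathcal{K}$ is a bounded positive operator commuting with $I$, we have $e^{t(\mathcal{K}-I)}=e^{-t}\sum_{n\ge 0}\frac{t^n}{n!}\mathcal{K}^n$, where $\mathcal{K}^n$ has the nonnegative integral kernel $k^{*n}$ (the $n$-fold convolution of $k$), with ${\rm supp}\,k^{*n}\subset\{\|z\|\le nr_0\}$ and $\int_{\RR^N}k^{*n}=1$; in particular $e^{t(\mathcal{K}-I)}$ is positivity preserving. Given $u_0\in X^+$, set $v(t,\cdot)=e^{Mt}\big(e^{t(\mathcal{K}-I)}u_0\big)\ge 0$. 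Then $v_t=(\mathcal{K}-I)v+Mv\ge(\mathcal{K}-I)v+a(t,\cdot)v$ since $(M-a)v\ge 0$, so $v$ is a bounded super-solution of \eqref{linear-new-eq1} with $\mu=0$ (the equation defining $\tilde\Phi$, which at $\mu=0$ is $\xi$-independent) and $v(0,\cdot)=u_0$. By Proposition \ref{comparison-linear-prop}(1) (with $\mu=0$) we get $0\le(\tilde\Phi(T;a)u_0)(x)\le e^{MT}\big(e^{T(\mathcal{K}-I)}u_0\big)(x)$ for all $x\in\RR^N$. Testing against nonnegative $\varphi\in C_c(\RR^N)\subset X$ and using \eqref{measure-eq1}, this yields, for each $x$, the inequality of measures in $dy$
\[
m(x;y,dy)\ \le\ e^{(M-1)T}\,\delta_x(dy)\ +\ e^{(M-1)T}\sum_{n\ge 1}\frac{T^n}{n!}\,k^{*n}(y-x)\,dy .
\]

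Then the tail estimate: fix $M_0>0$, and let $|\mu|\le M_0$ and $B>0$. The atom $\delta_x$ does not meet $\{\|y-x\|\ge B\}$, and since ${\rm supp}\,k^{*n}\subset\{\|z\|\le nr_0\}$ we have $\int_{\|z\|\ge B}e^{\mu\|z\|}k^{*n}(z)\,dz=0$ when $nr_0<B$, while always $\int_{\|z\|\ge B}e^{\mu\|z\|}k^{*n}(z)\,dz\le e^{M_0 nr_0}\int_{\RR^N}k^{*n}=e^{M_0 nr_0}$. Hence
\[
\int_{\|y-x\|\ge B}e^{\mu\|y-x\|}\,m(x;y,dy)\ \le\ e^{(M-1)T}\sum_{nr_0\ge B}\frac{T^n}{n!}\,e^{M_0 nr_0}\ =\ e^{(M-1)T}\sum_{nr_0\ge B}\frac{\big(Te^{M_0 r_0}\big)^n}{n!}.
\]
The right-hand side is independent of $x$ and of $\mu$ (for $|\mu|\le M_0$), and it is the tail of the convergent series $\sum_{n\ge 0}\frac{(Te^{M_0 r_0})^n}{n!}=e^{\,Te^{M_0 r_0}}$, so it $\to 0$ as $B\to\infty$. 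This gives the asserted convergence, uniform in $x\in\RR^N$ and in $\mu$ over bounded sets.

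I expect the only mildly delicate point to be passing from the pointwise inequality for continuous data to the inequality of measures in the first step; this is handled by testing against nonnegative $\varphi\in C_c(\RR^N)$ (which lie in $X$) together with Riesz representation, or alternatively by invoking Remark \ref{general-space-rk} to run the comparison directly with measurable data $u_0=\mathbf{1}_E$. Everything else is bookkeeping with the explicit Poisson kernel, so no genuine obstacle is anticipated.
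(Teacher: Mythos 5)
Your proof is correct, and it follows a genuinely different route from the paper's. The paper proceeds softly: it builds cut-off functions $u_n$ that vanish on $\{\|x\|\le n-1\}$ and grow like $e^{\mu_0\|x\|}$ for $\|x\|\ge n$, observes $\|u_n\|_{X(\mu_0+1)}\to 0$, invokes the boundedness of $\tilde\Phi(T;a)$ on the weighted space $X(\mu_0+1)$ (Remark \ref{weighted-space-rk}) to get $\int u_n(y)\,m(x;y,dy)\to 0$ uniformly on bounded $x$-sets, and then uses the periodicity relation \eqref{measure-eq2} to upgrade uniformity over bounded sets of $x$ to all of $\RR^N$. You instead dominate $m(x;y,dy)$ by an explicit Poisson mixture $e^{(M-1)T}\big(\delta_x + \sum_{n\ge 1}\frac{T^n}{n!}k^{*n}(y-x)\,dy\big)$ via a comparison with the constant-coefficient flow $e^{Mt}e^{t(\mathcal K-I)}$, exploit ${\rm supp}\,k^{*n}\subset\{\|z\|\le nr_0\}$ to kill the terms with $nr_0<B$, and bound the remainder by a tail of the convergent series $\sum_n (Te^{M_0 r_0})^n/n!$. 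Your argument is self-contained and immediately uniform in all of $x\in\RR^N$ (no separate translation step), it gives an explicit quantitative rate, and it does not lean on the weighted-space solvability; the paper's argument is more abstract and would survive with kernels that are not compactly supported, provided the solution operator remains bounded on $X(\rho)$. The one delicate point you flag yourself — passing from the pointwise inequality for continuous nonnegative data to the inequality of measures — is handled correctly either by testing against nonnegative $\varphi\in C_c(\RR^N)$ and Riesz representation, or by running the comparison directly with $u_0=\mathbf 1_E$ via Remark \ref{general-space-rk}.
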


\begin{proof}
For given $\mu_0>0$ and $n\in\NN$, let $u_n\in X(\mu_0+1)$ be such that
$$
u_n(x)=\begin{cases} e^{\mu_0\|x\|}\quad {\rm for}\quad \|x\|\geq n\cr 0\quad {\rm for}\quad \|x\|\leq n-1
\end{cases}
$$
and
$$
0\leq u_n(x)\leq e^{\mu_0 n} \quad {\rm for}\quad \|x\|\leq n.
$$
Then $\|u_n\|_{X(\mu_0+1)}\to 0$ as $n\to\infty$. Therefore, $\|\tilde\Phi(T)u_n\|_{X(\mu_0+1)}\to 0$  as
$n\to\infty.$ This  implies that
$$
\int_{\RR^N}u_n(y)m(x;y,dy)\to 0\quad {\rm as}\quad n\to\infty
$$
uniformly for $x$ in bounded subsets of $\RR^N$ and then
$$
\int_{\|y\|\geq n}e^{\mu_0 \|y\|}m(x;y,dy)\to 0\quad {\rm as}\quad n\to\infty
$$
uniformly for $x$ in bounded subsets of $\RR^N$. The later implies that
$$
\int_{\|y-x\|\geq n}e^{\mu\|y-x\|}m(x;y,dy)\to 0\quad {\rm as}\quad n\to\infty
$$
uniformly for $|\mu|\leq\mu_0$ and $x$ in bounded subset of $\RR^N$. By \eqref{measure-eq2}, for every $1\leq
i\leq N$,
\begin{align*}
&\int_{\|y-(x+p_ie_i)\|\geq n}e^{\mu\|y-(x+p_ie_i)\|}m(x+p_ie_i;y,dy)\\
&= \int_{\|y-x\|\geq
n}e^{\mu\|y-x\|}m(x+p_ie_i;y+p_ie_i,dy)\\
&=\int_{\|y-x\|\geq n}e^{\mu\|y-x\|}m(x;y,dy).
\end{align*}
We then have
$$
\int_{\|y-x\|\geq n}e^{\mu\|y-x\|}m(x;y,dy)\to 0\quad {\rm as}\quad n\to\infty
$$
uniformly for $|\mu|\leq\mu_0$ and $x\in\RR^N$. The lemma now follows.
 \end{proof}

Without loss of generality, in the rest of this section, we assume that the time period $T=1$.

\begin{lemma}
\label{main-lm3}
For given $\xi\in S^{N-1}$, if $\lambda_0(\xi,\mu,a_0)$ is the principal eigenvalue of
$-\p_t+\mathcal{K}_{\xi,\mu}-I+a_0(\cdot,\cdot)I$ for any $\mu>0$, then
\begin{equation}
\label{ineq1}
c_{\sup}^*(\xi)\le\inf_{\mu>0}\frac{\lambda_0(\xi,\mu,a_0)}{\mu}.
\end{equation}
\end{lemma}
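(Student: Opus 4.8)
The plan is to build, for every speed $c>\inf_{\mu>0}\lambda_0(\xi,\mu,a_0)/\mu$, an explicit exponentially decaying solution of the linearization of \eqref{main-eq} at $u\equiv0$ that dominates the nonlinear flow for all $t\ge0$; reading off its decay ahead of the moving coordinate $x\cdot\xi=ct$ will show $c\in C_{\sup}^*(\xi)$, and since this holds for all such $c$, $c_{\sup}^*(\xi)=\inf C_{\sup}^*(\xi)\le\inf_{\mu>0}\lambda_0(\xi,\mu,a_0)/\mu$, which is \eqref{ineq1}.

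First I would fix such a $c$ and choose $\mu>0$ with $\lambda_0:=\lambda_0(\xi,\mu,a_0)<\mu c$. By hypothesis $\lambda_0$ is the principal eigenvalue of $-\partial_t+\mathcal{K}_{\xi,\mu}-I+a_0(\cdot,\cdot)I$, so Theorem \ref{PE-simplicity-thm} supplies a positive eigenfunction $\phi:=\phi(\cdot,\cdot;\xi,\mu)\in\mathcal{X}_p^+$, and since $\phi$ is continuous and $(t,x)$-periodic there are constants $0<\phi_{\min}\le\phi\le\phi_{\max}$. Set
\[
v(t,x):=M\,e^{-\mu\left(x\cdot\xi-\frac{\lambda_0}{\mu}t\right)}\phi(t,x),\qquad M>0 .
\]
A direct computation — writing $\int_{\RR^N}k(y-x)v(t,y)\,dy=M\,e^{-\mu(x\cdot\xi-\frac{\lambda_0}{\mu}t)}(\mathcal{K}_{\xi,\mu}\phi)(t,x)$ and inserting the eigenvalue identity $-\phi_t+\mathcal{K}_{\xi,\mu}\phi-\phi+a_0\phi=\lambda_0\phi$ — shows that $v$ is an exact solution of the linearization \eqref{linearization-eq0} of \eqref{main-eq}.

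Next I would compare. By (H1), $f(t,x,u)\le f(t,x,0)=a_0(t,x)$ for $u\ge0$, so every nonnegative solution of \eqref{main-eq} is a subsolution of \eqref{linearization-eq0}; in particular so is $u(\cdot,\cdot;u_0)$ for $u_0\in X^+(\xi)$, which exists globally (Proposition \ref{comparison-nonlinear-prop}(3)) and satisfies $0\le u(t,\cdot;u_0)\le\beta_0$ for all $t\ge0$ by comparing \eqref{main-eq} with a large constant supersolution (using $f(t,x,u)<0$ for $u\gg1$). Since $u_0$ is bounded and vanishes for $x\cdot\xi$ large, I can choose $M$ large enough that $u_0\le v(0,\cdot)$ on $\RR^N$ (on $\{x\cdot\xi\le L\}$ use $v(0,x)\ge Me^{-\mu L}\phi_{\min}$). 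Because $v\ge0$, the one-sided bound $v-u(\cdot,\cdot;u_0)\ge-\beta_0$ holds, so iterating the comparison principle Proposition \ref{comparison-linear-prop}(1) over the unit time intervals (using the $1$-periodicity of \eqref{linearization-eq0}, normalized above) gives $u(t,x;u_0)\le v(t,x)$ for all $t\ge0$ and $x\in\RR^N$.

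Finally, for $x\cdot\xi\ge ct$ one obtains $u(t,x;u_0)\le v(t,x)\le M\phi_{\max}\,e^{-(\mu c-\lambda_0)t}$, and $\mu c-\lambda_0>0$, so $\sup_{x\cdot\xi\ge ct}u(t,x;u_0)\to0$ as $t\to\infty$; hence $c\in C_{\sup}^*(\xi)$, and letting $c$ decrease to $\inf_{\mu>0}\lambda_0(\xi,\mu,a_0)/\mu$ yields \eqref{ineq1}. The scheme is that of \cite[Proposition 4.4]{ShZh0}, and I expect no serious obstacle. The place where the standing hypothesis — that $\lambda_0(\xi,\mu,a_0)$ is a genuine principal \emph{eigenvalue} for all $\mu>0$, not merely a principal spectrum point — is essential is precisely the construction of $v$: without a positive, continuous, periodic eigenfunction $\phi$ bounded away from $0$ there is no explicit supersolution to compare against. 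The only technical care needed is the domination $u_0\le v(0,\cdot)$ (which uses the decay of $u_0$ as $x\cdot\xi\to+\infty$ together with $\phi_{\min}>0$) and checking that the Section 2 comparison principle applies with the required lower bound.
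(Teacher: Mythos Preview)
Your proposal is correct and follows essentially the same route as the paper: both build the explicit exponential solution $v(t,x)=M e^{-\mu(x\cdot\xi-\lambda_0 t/\mu)}\phi(t,x)$ of the linearization from the principal eigenfunction, use (H1) to see that the nonlinear solution is a subsolution of \eqref{linearization-eq0}, dominate the initial data by choosing $M$ large (using that $u_0\in X^+(\xi)$ vanishes for $x\cdot\xi$ large), and read off the decay for $x\cdot\xi\ge ct$. The only cosmetic differences are that the paper carries the extra shift parameter $z$ (used later) and that your iteration over unit time intervals is unnecessary since Proposition~\ref{comparison-linear-prop}(1) already applies on any $[0,T)$.
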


\begin{proof}
For given $\xi\in S^{N-1}$, put $\lambda(\mu)=\lambda_0(\xi,\mu,a_0)$.
For any $\mu>0$, suppose that $\phi(\mu,\cdot,\cdot)\in\mathcal{X}_p^+$, $\|\phi(\mu,\cdot,\cdot)\|=1$,  and
 $$[-\p_t+(\mathcal{K}_{\xi,\mu}-I+a_0(\cdot,\cdot)I)]{\phi}(\mu,t,x)=\lambda(\mu){\phi}(\mu,t,x).$$
 Since $f(t,x,u)=f(t,x,0)+f_{u}(t,x,\eta)u$ for some $0 \leq \eta \leq
u$, we have, by assumption (H1), $f(t,x,u) \leq f(t,x,0)$ for $u \geq 0$. If $u_0\in X^+$ , then
\begin{equation}
\label{aux-eq12} u(t,x;u_0,z)\leq (\Phi(t;\xi,0,a_0(\cdot,\cdot+z))u_0)(x)\quad {\rm for}\quad x,z\in\RR^N,
\end{equation}
 where $u(t,x;u_0,z)$ is the solution of the space shifted equation \eqref{main-shifted-eq} of \eqref{main-eq}.
It can easily be verified that
\begin{align*}
(\Phi(t;\xi,0,a_0(\cdot,\cdot+z))\tilde{u}_{0})(x)&=Me^{-\mu(x\cdot \xi-\tilde{c}t)}{\phi}(\mu,t,x+z)
\end{align*}
 with $\tilde{u}_{0}(x)=Me^{-\mu x\cdot \xi}{\phi}(\mu, 0,x+z)$
for $\tilde{c}=\frac{\lambda(\mu)}{\mu}$ and $M > 0$.
By the definition of $X^+(\xi)$ (see \eqref{X-space-in-direction-xi}), for any $u_0\in X^+(\xi)$, we can
choose $M > 0$ large enough such that $\tilde{u}_{0} \geq u_{0}$. Then by Propositions \ref{comparison-linear-prop} and
\ref{comparison-nonlinear-prop},  we have
\begin{align*}
u(t,x;u_{0},z) &\leq (\Phi(t;\xi,0,a_0(\cdot,\cdot+z))u_{0})(x)\\
&\leq (\Phi(t;\xi,0,a_0(\cdot,\cdot+z))\tilde{u}_{0})(x)\\
&=Me^{-\mu(x\cdot
\xi-\tilde{c}t)}{\phi}(\mu,t,x+z).
\end{align*}
 Hence,
 $$\limsup_{x\cdot\xi\geq ct,t\to\infty}u(t,x;u_0,z)=0\quad{\rm for\quad
every}\quad c>\tilde c$$
uniformly in $z\in\RR$.
 This   implies that $c_{\sup}^*(\xi)\leq \frac{\lambda(\mu)}{\mu}$ for any $\mu>0$ and hence
 \eqref{ineq1} holds.
\end{proof}

\begin{lemma}
\label{main-lm4}
For given $\xi\in S^{N-1}$, if $\lambda_0(\xi,\mu,a_0)$ is the principal eigenvalue of
$-\p_t+\mathcal{K}_{\xi,\mu}-I+a_0(\cdot,\cdot)I$ for any $\mu>0$, then
\begin{equation}
\label{ineq2}
c_{\inf}^*(\xi)\ge\inf_{\mu>0}\frac{\lambda_0(\xi,\mu,a_0)}{\mu}.
\end{equation}
\end{lemma}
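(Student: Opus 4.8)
The plan is to establish the lower bound for $c_{\inf}^*(\xi)$ by constructing, for any speed $c$ slightly below $\inf_{\mu>0}\lambda_0(\xi,\mu,a_0)/\mu$, a subsolution of the nonlinear shifted equation \eqref{main-shifted-eq} that forces the solution to stay bounded away from $0$ on the moving region $\{x\cdot\xi\le cnT\}$, and then invoke Lemma \ref{main-lm1}(1) to upgrade this to convergence to $u^*$. First I would fix $\xi$, write $\lambda(\mu)=\lambda_0(\xi,\mu,a_0)$, and recall from Proposition \ref{PE-minimum-prop} that the infimum $\inf_{\mu>0}\lambda(\mu)/\mu$ is attained at some $\mu^*=\mu^*(\xi)\in(0,\infty)$, with principal eigenfunction $\phi(\cdot,\cdot;\xi,\mu)$ depending smoothly on $\mu$ (Theorem \ref{PE-simplicity-thm}). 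The sign condition $\lambda(0)>0$ needed there follows from (H2) since $\lambda(0)=\lambda_0(a_0)>0$. Let $c^*=\lambda(\mu^*)/\mu^*$; it suffices to show $c^*\in C_{\inf}^*(\xi)$, i.e. that any $c<c^*$ lies in $C_{\inf}^*(\xi)$ after applying Lemma \ref{main-lm1}(1) with $\delta_0$ independent of the shift $z$.

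The core construction is a perturbed-eigenfunction subsolution. Using assumption (H1), near $u=0$ we have $f(t,x,u)\ge f(t,x,0)-Lu=a_0(t,x)-Lu$ for small $u\ge0$ and some $L>0$; so it is enough to build a subsolution of the linear equation \eqref{linear-new-eq1} (with $a=a_0$) that is small and then scale it down. Following the standard two-exponential ansatz, for $\mu$ slightly less than $\mu^*$ one considers a function of the form $v(t,x)=e^{-\mu(x\cdot\xi)}\phi(\mu,t,x) - d\,e^{-\mu'(x\cdot\xi)}\phi(\mu',t,x)$ with $\mu<\mu'<\mu^*$ and $d>0$ chosen suitably; after passing to the moving frame $x\cdot\xi - ct$ and using that $\lambda'(\mu)<\lambda(\mu)/\mu$ for $0<\mu<\mu^*$ (Proposition \ref{PE-useful-prop}(i)), one checks that a truncated, nonnegative version of $v$ is a subsolution of \eqref{main-shifted-eq} on a half-space region and has a positive lower bound on $\{x\cdot\xi\le cnT\}$ as $n\to\infty$, uniformly in $z$ because the eigenfunctions are periodic and the estimate is translation-covariant. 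Lemma \ref{main-lm2} (controlling the tails of the kernel measure $m(x;y,dy)$ against exponential weights) is the technical ingredient that makes the truncation legitimate — the cut-off contributes an error that is exponentially small and absorbed by the gap between $\mu$ and $\mu'$. Comparison (Propositions \ref{comparison-linear-prop}, \ref{comparison-nonlinear-prop}, and Remark \ref{weighted-space-rk} for the weighted spaces) then gives \eqref{global-eq1} with a uniform $\delta_0$, and Lemma \ref{main-lm1}(1) finishes the argument, yielding $c\in C_{\inf}^*(\xi)$ for every $c<c^*$, hence $c_{\inf}^*(\xi)\ge c^*=\inf_{\mu>0}\lambda(\mu)/\mu$.

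One point that needs care, and which I expect to be the main obstacle, is that \eqref{eigenvalue-eq0} genuinely has a \emph{principal} eigenvalue only under the hypothesis of the lemma; without compactness of the solution semigroup (the feature that distinguishes nonlocal from random dispersal, as emphasized in the introduction) the usual sliding/limiting arguments for subsolutions are delicate. To handle this cleanly I would first prove the estimate with $a_0$ replaced by a smooth approximant $a^\pm$ satisfying the vanishing condition of Proposition \ref{PE-sufficient-prop}, for which the principal eigenvalue exists for \emph{all} $\mu\in\RR$ and $\lambda_0(\xi,\mu,\cdot)$ is convex in $\mu$ (Proposition \ref{PE-convex-prop}); using $a^-\le a_0$ together with the $\epsilon$-closeness of the principal spectrum points (Proposition \ref{PE-perturbation-prop}), the lower-bound speed obtained for $a^-$ converges to $\inf_{\mu>0}\lambda(\mu)/\mu$ as $\epsilon\to0$. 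Since the solution of \eqref{main-shifted-eq} dominates the solution of the $a^-$-problem near $0$ (again by (H1) and comparison), this passes the bound to the original equation. Throughout, the uniformity in the shift $z\in\RR^N$ must be tracked, but this is automatic from the spatial periodicity of $f$ and of the eigenfunctions, combined with the uniform-in-$x$ tail bound of Lemma \ref{main-lm2}.
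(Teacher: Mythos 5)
Your plan has a genuine gap at the heart of the construction. The two-exponential ansatz $v=e^{-\mu(x\cdot\xi-ct)}\phi(\mu)-d\,e^{-\mu_1(x\cdot\xi-ct)}\phi(\mu_1)$ with $\mu<\mu_1<\mu^*(\xi)$ only yields a subsolution when $c=\lambda(\mu)/\mu$, which is a \emph{supercritical} speed ($c>c^*(\xi)$), and its positivity set is the right half-space $\{x\cdot\xi-ct>M\}$ — as $x\cdot\xi-ct\to -\infty$ the $\mu_1$-term dominates and $v\to -\infty$. So a truncated nonnegative version of $v$ vanishes precisely on the region $\{x\cdot\xi\le cnT\}$ where Lemma \ref{main-lm1}(1) needs a uniform positive lower bound. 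In short, this subsolution is built to sit in front of the wave, not to sweep the bulk. It is exactly the device the paper uses later in Section 5 (Proposition \ref{sub-solution-prop1}) for the traveling-wave construction at speeds $c>c^*(\xi)$, but it cannot furnish the hypothesis \eqref{global-eq1} that Lemma \ref{main-lm1}(1) requires to conclude $c_{\inf}^*(\xi)\ge c^*(\xi)$.

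What the paper actually does (following Weinberger \cite{Wei1}) is build a \emph{compactly supported} subsolution bump $v(s,z)=\epsilon_2\phi(\mu,z)e^{-\mu s}\sin\gamma\big(s-\kappa(\mu,z)\big)$ for $0\le s-\kappa(\mu,z)\le \pi/\gamma$ (zero otherwise), together with the truncation $\zeta(\cdot/B)$, the phase shift $\tau(\mu,\gamma,z,B)$, and the quantity $\kappa(\mu,z)=\phi_\mu/\phi$, all controlled via Lemma \ref{main-lm2}. One then shows via \eqref{augg-eq5}–\eqref{augg-eq7} that after one time period the bump translates in the $+\xi$ direction by $-\lambda'(\mu)+\epsilon_1$, which by Proposition \ref{PE-useful-prop} can be made arbitrarily close to $c^*(\xi)$ from below; iterating produces the uniform-in-$z$ lower bound required by Lemma \ref{main-lm1}(1). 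The trigonometric factor is what encodes a complex decay rate $\mu+i\gamma$ and gives a subsolution at \emph{subcritical} speeds; the inequality $\lambda'(\mu)<\lambda(\mu)/\mu$ that you invoke is indeed used, but inside this sine construction, not the two-exponential one. Your "second route" via smooth approximants $a^\pm$ (Propositions \ref{PE-sufficient-prop} and \ref{PE-perturbation-prop}) is correct in spirit but belongs to the proof of Theorem \ref{spreading-thm1} (removing the principal-eigenvalue hypothesis), and would still require the sine-based subsolution for each fixed $a^-$ — so it does not repair the gap.
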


\begin{proof}
We  prove \eqref{ineq2}  by modifying the
arguments in  \cite{Wei1}.

Observe that, for every $\epsilon_0>0$, there is  $b_0>0$ such that
\begin{equation}
\label{aux-eq100} f(t,x,u)\geq f(t,x,0)-\epsilon_0\quad {\rm for}\quad 0\leq u\leq b_0,\quad x\in\RR^N.
\end{equation}
 Hence if $u_0\in X^+$ is so small
that $0\leq u(t,x;u_0,z)\leq b_0$ for $t\in [0,1]$, $x\in\RR^N$ and $z\in\RR^N$, then
\begin{equation}
\label{aux-eq200} u(1,x;u_0,z)\geq e^{-\epsilon_0}(\Phi(1;\xi,0,a_0(\cdot,\cdot+z))u_0)(x)
\end{equation}
for $x\in\RR^N$ and $z\in\RR^N$.

Let $r(\mu)$ be the spectral radius of $\Phi^p(1;\xi,\mu,a_0)$. Then  $\lambda(\mu)=\ln r(\mu)$ and
$r(\mu)$ is an eigenvalue of $\Phi^p(1;\xi,\mu,a_0(\cdot,\cdot))$
with a positive eigenfunction
$\phi(\mu,x):=\phi(\mu,1,x)$, where $\phi(\mu,t,x)$ is as in the proof of Lemma \ref{main-lm3}.

 By Proposition \ref{PE-useful-prop},   for any $\epsilon_1>0$, there is $\mu_{\epsilon_1}$ such
that
\begin{equation}\label{estimate-eq3}
-\lambda^{'}(\mu)<-\frac{\lambda(\mu^*(\xi))}{\mu^*(\xi)}+\epsilon_1
\end{equation}
for $\mu_{\epsilon_{1}}<\mu<\mu^*(\xi)$. In the following, we fix $\mu\in (\mu_{\epsilon_1},\mu^*(\xi))$. By
Proposition \ref{PE-useful-prop}  again, we can choose $\epsilon_0>0$ so small that
\begin{equation}
\label{estimate-eq1} \lambda(\mu)-\mu \lambda^{'}(\mu)-3\epsilon_{0}>0.
\end{equation}

Let $\zeta:\RR\to [0,1]$ be a smooth function satisfying that
 \begin{equation}
 \label{truncated-function}
 \zeta(s)=\begin{cases} 1\quad {\rm for}\quad  |s|\leq 1\cr
  0\quad {\rm for}\quad  |s|\geq 2.
  \end{cases}
  \end{equation}
  By Theorem \ref{PE-simplicity-thm}, $\phi(\mu,x)$ is smooth in $\mu$.
 Let
$$
\kappa(\mu,z)=\frac{\phi_\mu(\mu,z)}{\phi(\mu,z)}.
$$
For given $\gamma>0$, $B>0$,  and $z\in\RR^N$, define
\begin{align*}
&\tau(\mu,\gamma,z,B)\\
&=\frac{1}{\gamma}tan^{-1}\frac{\int_{\RR^N}\phi(\mu,y)e^{-\mu
(y-z)\cdot\xi}\sin\gamma(-(y-z)\cdot\xi+\kappa(\mu,y))\zeta(\|y-z\|/B)m(z;y,dy)}
{\int_{\RR^N}\phi(\mu,y)e^{-\mu(y-z)\cdot\xi}\cos\gamma(-(y-z)\cdot\xi+\kappa(\mu,y))\zeta(\|y-z\|/B)m(z;y,dy)}.
\end{align*}
By Lemma \ref{main-lm2}, $\tau(\mu,\gamma,z,B)$ is well defined for any $B>0$ and $0<\gamma \ll 1$, and
\begin{align*}
&\lim_{\gamma\to 0}\tau(\mu,\gamma,z,B)\\
&=\frac{\int_{\RR^N} \phi(\mu,y)e^{-\mu
(y-z)\cdot\xi} \big(-(y-z)\cdot\xi+\kappa(\mu,y)\big)\zeta(\|y-z\|/B)m(z;y,dy)}
{\int_{\RR^N} \phi(\mu,y)e^{-\mu
(y-z)\cdot\xi}\zeta(\|y-z\|/B)m(z;y,dy)}
\end{align*}
uniformly in $z\in\RR^N$ and $B>0$.
By Lemma \ref{main-lm2} again,
\begin{equation}
\label{new-eq1}
\lim_{B\to\infty} \int_{\RR^N} \phi(\mu,y)e^{-\mu
(y-z)\cdot\xi}\zeta(\|y-z\|/B)m(z;y,dy)=r(\mu)\phi(\mu,z)
\end{equation}
uniformly in $z\in\RR^N$
and
\begin{align}
\label{new-eq2}
&\lim_{B\to\infty}\Big[ \int_{\RR^N} \phi(\mu,y)e^{-\mu
(y-z)\cdot\xi}\big(-(y-z)\cdot\xi\big)\zeta(\|y-z\|/B)m(z;y,dy)\nonumber\\
&\quad \quad +\int_{\RR^N} \phi_{\mu}(\mu,y)e^{-\mu
(y-z)\cdot\xi}\zeta(\|y-z\|/B)m(z;y,dy)\Big]\nonumber\\
&=  \int_{\RR^N} \phi(\mu,y)e^{-\mu
(y-z)\cdot\xi}\big(-(y-z)\cdot\xi\big)m(z;y,dy)\nonumber\\
&\qquad +\int_{\RR^N} \phi_{\mu}(\mu,y)e^{-\mu
(y-z)\cdot\xi}m(z;y,dy)\nonumber\\
&=r^{'}(\mu)\phi(\mu,z)+r(\mu)\phi_\mu(\mu,z)
\end{align}
uniformly in $z\in\RR^N$.

By \eqref{new-eq1} and \eqref{new-eq2}, we can choose
 $B\gg 1$ and fix it   so that for any $0<\gamma\ll 1$ and $z,z'\in\RR^N$,
\begin{align}
\label{B-eq}
&\int_{\RR^N}\phi(\mu,y)e^{-\mu(y-z)\cdot\xi} \cos\gamma\big(-(y-z)\cdot\xi +\kappa(\mu,y)\big)\cdot \zeta(\|y-z\|/B)m(z;y,dy)\nonumber\\
&\qquad \ge e^{\lambda(\mu)-\epsilon_0}\phi(\mu,z),
\end{align}
\begin{equation}
\label{augg-eq1}
\gamma(2B+|\tau(\mu,\gamma,z,B)|+|\kappa(\mu,z')|)<\pi,
\end{equation}
\begin{equation}
\label{new-eq3} -\kappa(\mu,z)+\tau(\mu,\gamma,z,B)<\lambda^{'}(\mu)+\frac{\epsilon_0}{\mu},
\end{equation}
and
\begin{equation}
\label{new-eq4} \kappa(\mu,z)-\tau(\mu,\gamma,z,B)<-\lambda^{'}(\mu)+\epsilon_1.
\end{equation}

For given $\epsilon_2>0$ and $\gamma>0$, define
\begin{equation}
\label{aux-eq3} v(s,z)=\begin{cases} \epsilon_2\phi(\mu,z)e^{-\mu s}\sin \gamma(s-\kappa(\mu,z)),\quad 0\leq
s-\kappa(\mu,z)\leq\frac{\pi}{\gamma}\cr\cr
 0,\quad {\rm otherwise}.
\end{cases}
\end{equation}
Let
\begin{equation}
\label{augg-eq0}
v^*(x;s,z)=v(x\cdot\xi+s-\kappa(\mu,z)+\tau(\mu,\gamma,z,B),x+z).
\end{equation}
Choose $\epsilon_2>0$ so small that
$$
0\leq u(t,x; v^*(\cdot;s,z),z)\leq b_0\quad {\rm for}\quad t\in [0,1],\quad x,z\in\RR^N.
$$
Let
$$
\eta(\gamma,\mu,z,B)=-\kappa(\mu,z)+\tau(\mu,\gamma,z,B).
$$
Observe that
$$
\big(\Phi(1;\xi,0,a_0(\cdot,\cdot+z))v^*(\cdot;s,z))(x)=\int_{\RR^N} v^*(y-z;s,z)m(x+z;y,dy).
$$
Observe also that
\begin{align}
\label{augg-eq2}
&v^*(y-z;s,z)\nonumber\\
&=v\big((y-z)\cdot \xi+s+\eta(\gamma,\mu,z,B),y\big)\nonumber\\
&=\begin{cases}
\epsilon_2 \phi(\mu,y)e^{-\mu \big((y-z)\cdot \xi+s+\eta(\gamma,\mu,z,B)\big)}\sin \gamma((y-z)\cdot \xi+s+\eta(\gamma,\mu,z,B)-\kappa(\mu,y))\cr
\qquad\qquad\qquad\quad  {\rm if}\,\, 0\le (y-z)\cdot \xi+s+\eta(\gamma,\mu,z,B)-\kappa(\mu,y)\le\frac{\pi}{\gamma}\cr\cr
0\quad {\rm otherwise},
\end{cases}
\end{align}
\begin{align}
\label{augg-eq3}
-\pi&\le \gamma\Big[(y-z)\cdot\xi+s+\eta(\gamma,\mu,z,B)-\kappa(\mu,y)\Big]\nonumber\\
&\le 2\pi\quad {\rm for}\,\, 0\le s-\kappa(\mu,z)\le \frac{\pi}{\gamma},\,\, \|y-z\|\le 2B,
\end{align}
and
\begin{align}
\label{augg-eq4}
&\int_{\RR^N} \phi(\mu,y)e^{-\mu(y-z)\cdot\xi}\sin \gamma((y-z)\cdot \xi+s+\eta(\gamma,\mu,z,B)-\kappa(\mu,y))\zeta(\|y-z\|/B)m(z,y,dy)\nonumber\\
&=\int_{\RR^N} \phi(\mu,y) e^{-\mu(y-z)\cdot\xi} \Big[\cos\gamma(\kappa(\mu,y)-(y-z)\cdot\xi)\sin\gamma(s+\eta(\gamma,\mu,z,B))\nonumber\\
&\qquad\qquad  -\sin\gamma(\kappa(\mu,y)-(y-z)\cdot\xi)\cos\gamma(s+\eta(\gamma,\mu,z,B))\Big]\zeta(\|y-z\|/B)m(z,y,dy)\nonumber\\
&=\int_{\RR^N}\phi(\mu,y)e^{-\mu(y-z)\cdot\xi}\cos\gamma(\kappa(\mu,y)-(y-z)\cdot\xi) \zeta(\|y-z\|/B) m(z,y,dy)\nonumber\\
&\qquad \cdot \Big[\sin \gamma(s+\eta(\gamma,\mu,z,B))\nonumber -\cos\gamma(s+\eta(\gamma,\mu,z,B))\nonumber\\
&\qquad \cdot \frac{\int_{\RR^N} \phi(\mu,y)e^{-\mu(y-z)\cdot\xi}\sin \gamma(\kappa(\mu,y)-(y-z)\cdot\xi))\zeta(\|y-z\|/B)m(z,y,dy)}{\int_{\RR^N}
 \phi(\mu,y)e^{-\mu(y-z)\cdot\xi}\cos \gamma(\kappa(\mu,y)-(y-z)\cdot\xi))\zeta(\|y-z\|/B)m(z,y,dy)}\Big]\nonumber\\
 &=\int_{\RR^N}\phi(\mu,y)e^{-\mu(y-z)\cdot\xi}\cos \gamma(\kappa(\mu,y)-(y-z)\cdot\xi) \zeta(\|y-z\|/B)m(z,y,dy))\nonumber\\
 &\qquad \cdot \Big[\sin \gamma(s+\eta(\gamma,\mu,z,B)) -\cos\gamma(s+\eta(\gamma,\mu,z,B))\tan\gamma\tau(\mu,\gamma,z,B)\Big]\nonumber\\
 &=\frac{\sin \gamma(s-\kappa(\mu,z))}{\cos\gamma\tau(\mu,\gamma,z,B)}\nonumber\\
 &\qquad \cdot \int_{\RR^N}\phi(\mu,y)e^{-\mu(y-z)\cdot\xi}\cos\gamma(\kappa(\mu,y)-(y-z)\cdot\xi) \zeta(\|y-z\|/B)m(z,y,dy)
\end{align}
Then for $0\leq s-\kappa(\mu,z)\leq\frac{\pi}{\gamma}$, we have
\begin{align}
\label{augg-eq5}
&u(1,0;v^*(\cdot;s,z),z)\nonumber\\
&\geq e^{-\epsilon_{0}}\big(\Phi(1;\xi,0,a_0(\cdot,\cdot+z))v^{*}(\cdot;s,z)\big)(0)\qquad\qquad {\rm (by\,\,\, \eqref{aux-eq200})}\nonumber\\
&\geq\epsilon_2 e^{-\epsilon_{0}}\int_{\RR^N} \Big[\phi(\mu,y)e^{-\mu[(y-z)\cdot\xi+s+\eta(\gamma,\mu,z.B)]}\nonumber\\
&\qquad \cdot
\sin\gamma[(y-z)\cdot\xi+s+\eta(\gamma,\mu,z,B)-\kappa(\mu,y)]\nonumber\\
&\quad\quad \cdot
\zeta(\|y-z\|/B)\Big]m(z;y,dy)\qquad \qquad\qquad\qquad  {\rm (by \,\, \eqref{augg-eq2}, \,\, \eqref{augg-eq3})}\nonumber\\
&=e^{-\epsilon_{0}}v(s,z) e^{-\mu\eta(\gamma,\mu,z,B)}\frac{\sec\gamma\tau(\mu,\gamma,z,B)}{\phi(\mu,z)}\nonumber\\
&\qquad \cdot \int_{\RR^N}
\Big[\phi(\mu,y)e^{-\mu(y-z)\cdot\xi}\cdot \cos\gamma(-(y-z)\cdot\xi+\kappa(\mu,y))\nonumber\\
&\quad\quad\cdot \zeta(\|y-z\|/B)\Big]m(z;y,dy) \qquad\qquad\qquad\qquad {\rm (by \,\, \eqref{augg-eq4}).}
\end{align}
Observe that
\begin{align}
\label{augg-eq6}
&\lim_{\gamma\to 0}e^{-\epsilon_{0}}e^{-\mu\eta(\gamma,\mu,z,B)}\frac{\sec\gamma\tau(\mu,\gamma,z,B)}{\phi(\mu,z)}
\int_{\RR^N}\Big[\phi(\mu,y)e^{-\mu(y-z)\cdot\xi}\nonumber\\
&\qquad \cdot\cos\gamma(-(y-z)\cdot\xi+\kappa(\mu,y))\cdot \zeta(\|y-z\|/B)\Big]m(z;y,dy)\nonumber\\
&\ge e^{-\epsilon_{0}}e^{-\mu \lambda^{'}(\mu)-\epsilon_0}  e^{\lambda(\mu)-\epsilon_0}\qquad \qquad\text{(by}\,\, \eqref{B-eq},\,\,\eqref{new-eq3})\nonumber\\
&=e^{\lambda(\mu)-\mu \lambda^{'}(\mu)-3\epsilon_{0}}\nonumber\\
&>1 \qquad\qquad\qquad\qquad\qquad\qquad \quad\text{\rm (by}\,\, \eqref{estimate-eq1}\text{\rm )}.
\end{align}
By \eqref{augg-eq0}, \eqref{augg-eq5}, and \eqref{augg-eq6},  for $0\leq s-\kappa(\mu,z)\leq \frac{\pi}{\gamma}$ and $0<\gamma\ll 1$,
\begin{align}
\label{augg-eq7}
u(1,0;v^*(\cdot;s,z),z)&\geq v(s,z)\nonumber\\
&= v^*((\kappa(\mu,z)-\tau(\mu,\gamma,z,B))\xi;s,(-\kappa(\mu,z)+\tau(\mu,\gamma,z,B))\xi+z)
\end{align}
Since $v(s,z)=0$ for $s\le \kappa(\mu,z)$ or $s\ge \kappa(\mu,z)+\frac{\pi}{\gamma}$, we have
that \eqref{augg-eq7} holds
for all $s\in\RR$ and $z\in\RR^N$.

Let $\bar s(z)$ be such that $v(\bar s(z),z)=\max_{s\in\RR} v(s,z)$. Let
$$
\bar v(s,z)=\begin{cases} v(\bar s(z),z),\quad s\leq\bar s(z)-\frac{\pi}{\gamma}\cr
v(s+\frac{\pi}{\gamma},z),\quad s\geq \bar s(z)-\frac{\pi}{\gamma}.
\end{cases}
$$
Set
$$
\bar v^*(x;s,z)=\bar v(x\cdot\xi+s-\kappa(\mu,z)+\tau(\mu,\gamma,z,B),x+z).
$$
We then have
\begin{equation}
\label{augg-eq8}
\begin{cases}
\bar v(s,z) = v(s+\frac{\pi}{\gamma},z)\quad \forall\,\, s\ge s(z)-\frac{\pi}{\gamma},\,\, z\in\RR^N\cr
\bar v(s,z)=v(\bar s(z),z)\ge v(s+\frac{\pi}{\gamma},z)\quad \forall \,\, s\le \bar s(z)-\frac{\pi}{\gamma},\,\, z\in\RR^N
\end{cases}
\end{equation}
and
\begin{equation}
\label{augg-eq9}
\bar v^*(x;s,z)\ge v^*(x;s+\frac{\pi}{\gamma},z)\quad \forall\,\, s\in\RR,\,\, x,z\in\RR^N.
\end{equation}
Hence, when $s\ge \bar s(z)-\frac{\pi}{\gamma}$, we have
\begin{align*}
u(1,0;\bar v^*(\cdot;s,z),z)&\geq u(1,0; v^*(\cdot;s+\frac{\pi}{\gamma},z),z)\qquad\qquad {\rm (by \,\, \eqref{augg-eq9})}\nonumber\\
&\ge  v(s+\frac{\pi}{\gamma},z)\qquad\qquad \qquad\qquad\quad {\rm (by \,\, \eqref{augg-eq7})}\nonumber\\
&=\bar v(s,z)\qquad\qquad\qquad \qquad\qquad\quad {\rm (by \,\,\eqref{augg-eq8})}\nonumber\\
&=\bar v^*((\kappa(\mu,z)-\tau(\mu,\gamma,z,B))\xi;s,(-\kappa(\mu,z)+\tau(\mu,\gamma,z,B))\xi+z).
\end{align*}
When $s<\bar s(z)-\frac{\pi}{\gamma}$,
$$
\bar v^*(x,s,z)\ge \bar v^*(x,\bar s(z)-\frac{\pi}{\gamma},z)\ge v^*(x,\bar s(z),z)
$$
and then
\begin{align*}
u(1,0;\bar v^*(\cdot;s,z),z)&\geq u(1,0; v^*(\cdot;\bar s(z),z),z)\qquad\qquad {\rm (by \,\, \eqref{augg-eq9})}\nonumber\\
&\ge  v(\bar s(z),z)\qquad\qquad \qquad\qquad\quad {\rm (by \,\, \eqref{augg-eq7})}\nonumber\\
&=\bar v(s,z)\qquad\qquad\qquad \qquad\qquad {\rm (by \,\,\eqref{augg-eq8})}\nonumber\\
&=\bar v^*((\kappa(\mu,z)-\tau(\mu,\gamma,z,B))\xi;s,(-\kappa(\mu,z)+\tau(\mu,\gamma,z,B))\xi+z).
\end{align*}
Therefore,
$$
u(1,0;\bar v^*(\cdot;s,z),z)\ge\bar v^*((\kappa(\mu,z)-\tau(\mu,\gamma,z,B))\xi;s,(-\kappa(\mu,z)+\tau(\mu,\gamma,z,B))\xi+z)
$$
for  all $s\in\RR$ and $z\in\RR^N$.

Let
$$
v_0(x;z)=\bar v(x\cdot \xi,x+z).
$$
Note that $\bar v(s,x)$ is non-increasing in $s$. Hence we have
\begin{align*}
u(1,x;v_0(\cdot;z),z)&=u(1,0;v_0(\cdot+x;z),x+z)\\
&= u(1,0;\bar v^*(\cdot;x\cdot\xi+ \kappa(\mu,x+z)-\tau(\gamma,x+z),x+z),x+z)\\
&\geq \bar v(x\cdot\xi+\kappa(\mu,x+z)-\tau(\mu,\gamma,x+z,B),x+z)\\
&\geq \bar v(x\cdot \xi-\lambda^{'}(\mu)+\epsilon_1,x+z)\quad\qquad {\rm (by}\quad \eqref{new-eq4})\\
&\geq \bar v\Big(x\cdot \xi-\frac{\lambda(\mu^{*}(\xi))}{\mu^{*}(\xi)}+2\epsilon_1,x+z\Big)\quad
{\rm (by}\quad \eqref{estimate-eq3})\\
&=v_0\Big(x-[\frac{\lambda(\mu^*(\xi))}{\mu^*(\xi)}-2\epsilon_1]\xi,[\frac{\lambda(\mu^*(\xi))}{\mu^*(\xi)}
-2\epsilon_1]\xi+z\Big)
\end{align*}
for $z\in\RR^N$. Let $\tilde
c^*(\xi)=\frac{\lambda(\mu^*(\xi))}{\mu^*(\xi)}-2\epsilon_1$. Then
$$
u(1,x;v_0(\cdot,z),z)\geq v_0(x-\tilde c^*(\xi)\xi,\tilde c^*(\xi)\xi+z)
$$
for all $z\in\RR^N$. We also have
\begin{align*}
u(2,x;v_0(\cdot,z),z)&\geq u(1,x;v_0(\cdot-\tilde c^*(\xi)\xi,\tilde c^*(\xi)\xi+z),
z)\\
&=u(1,x-\tilde c^*(\xi)\xi;v_0(\cdot,\tilde c^*(\xi)\xi+z),\tilde c^*(\xi)\xi
+z)\\
&\geq v_0(x-2\tilde c^*(\xi)\xi,2\tilde c^*(\xi)+z)
\end{align*}
for all $z\in\RR^N$. By induction, we have
$$
u(n,x;v_0(\cdot,z),z)\geq v_0(x-n\tilde c^*(\xi)\xi,n\tilde c^*(\xi)+z)
$$
for $n\geq 1$ and  $z\in\RR^N$. This together with Lemma \ref{main-lm1}  implies that
$$
c^*_{\inf}(\xi)\geq \tilde c^*(\xi)=\frac{\lambda(\mu^*(\xi))}{\mu^*(\xi)}-2\epsilon_1.
$$
Since $\epsilon_1$ is arbitrary,  \eqref{ineq2}  holds.
\end{proof}

\begin{proof}[Proof of Theorem \ref{spreading-thm1}]
 Fix $\xi\in S^{N-1}$. Put $\lambda(\mu)=\lambda_0(\xi,\mu,a_0)$, where
  $a_0(t,x)=f(t,x,0)$. By  Proposition \ref{PE-minimum-prop}, there is
$\mu^*=\mu^*(\xi)\in (0,\infty)$ such that
$$
\inf_{\mu>0}\frac{\lambda(\mu)}{\mu}=\frac{\lambda(\mu^*)}{\mu^*}.
$$
It is easy to see that $c^*(\xi)$ exists and $c^*(\xi)=\frac{\lambda(\mu^*)}{\mu^*}$  if and only if
$c_{\inf}^*(\xi)=c_{\sup}^*(\xi)=\frac{\lambda(\mu^*)}{\mu^*}$.

If  $\lambda_0(\xi,\mu,a_0)$ is the principal eigenvalue of
$-\p_t +\mathcal{K}_{\xi,\mu}-I+a_0(\cdot,\cdot)I$ for all $\mu$, then by Lemmas \ref{main-lm3} and \ref{main-lm4},
 we have $c^*(\xi)$ exists and
$c^*(\xi)=\inf_{\mu>0}\frac{\lambda(\mu)}{\mu}$.

In general, let $a^n(\cdot,\cdot)\in C^N(\RR\times \RR^N,\RR)\cap \mathcal{X}_p$ be such that
$a^n$ satisfies the vanishing condition in Proposition \ref{PE-sufficient-prop},
$$
a^n\geq a_0\quad {\rm for}\quad n\geq 1\quad {\rm and}\quad
\|a^n-a\|_{\mathcal{X}_p}\to 0\quad {\rm as}\quad n\to\infty.
$$
Then,
$$
\lambda_0(\xi,\mu,a^n)\to \lambda_0(\xi,\mu,a_0) \quad {\rm as}\quad n\to\infty.
$$
  Note that for any $M_0>0$,
 $$uf(t,x,u)\leq u (a^n(t,x)-\epsilon u)\quad {\rm for}\quad x\in\RR^N,\,\,  0\le u\le M_0,\,\, 0<\epsilon\ll 1.
 $$
 By Lemma \ref{main-lm3} and Proposition \ref{comparison-nonlinear-prop}, for any $u_0\in X^+(\xi)$ and $c>\inf_{\mu>0}\frac{\lambda_0(\xi,\mu,a^n)}{\mu}$,
 $$
 \lim_{x\cdot\xi\ge ct,t\to\infty} u(t,x;u_0)\le \lim_{x\cdot\xi\ge ct, t\to\infty} u^n(t,x;u_0)=0,
 $$
 where $u_n(t,x;u_0)$ is the solution of \eqref{main-shifted-eq} with $f(t,x,u)$ being replaced by $f^n(t,x,u)=a^n(t,x)-\epsilon u$ ($0<\epsilon\ll 1$).
 This implies that
 $$
 c^*_{\sup}(\xi)\leq \frac{\lambda_0(\xi,\mu,a^n)}{\mu} \quad \forall\,\, \mu>0,\,\, n\geq 1
 $$
 and then
 $$
 c^*_{\sup}(\xi)\leq \frac{\lambda_0(\xi,\mu,a_0)}{\mu}\quad \forall \mu>0.
 $$
 Therefore,
 \begin{equation}
 \label{speed-eq1}
 c^*_{\sup}(\xi)\leq \inf_{\mu>0} \frac{\lambda_0(\xi,\mu,a_0)}{\mu}.
 \end{equation}

 For any $\epsilon>0$, there is $\delta_0>0$ such that
 $$
 f(t,x,u)\geq f(t,x,0)-\epsilon\quad {\rm for}\quad t\in\RR,\,\, x\in\RR^N,\,\, 0<u<\delta_0.
 $$
 Let $a_n(\cdot,\cdot)\in C^N(\RR\times\RR^N)\cap \mathcal{X}_p$ be such that $a_n$ satisfies the vanishing condition in Proposition \ref{PE-sufficient-prop} and
 $$
 f(\cdot,\cdot,0)-2\epsilon \leq a_n(\cdot,\cdot)\leq f(\cdot,\cdot,0)-\epsilon\quad \forall n\geq 1.
 $$
 Note that
 $$
 uf(t,x,u)\ge u(a_n(t,x)-Mu)\quad \forall \,\, 0\le u\le \delta_0,\,\, M>0.
 $$
 Choose $M\ge \frac{\max_{t\in\RR,x\in\RR^N}a_n(t,x)}{\delta_0}$. By Lemma \ref{main-lm4} and Proposition \ref{comparison-nonlinear-prop},
 for any $u_0\in X^+(\xi)$ with $\sup_{x\in\RR^N}u_0(x)\le \delta_0$,
 $$
 \liminf_{x\cdot\xi\le ct, t\to\infty} u(t,x;u_0,z)\ge \liminf_{x\cdot\xi\le ct, t\to\infty} u_n(t,x;u_0,z)>0
 $$
 for any $c<\inf_{\mu>0}\frac{\lambda_0(\xi,\mu,a_n)}{\mu}$, where $u_n(t,x;u_0,z)$ is the solution of \eqref{main-shifted-eq}
 with $f(t,x,u)$ being replaced by $f_n(t,x,u)=a_n(t,x)-Mu$.
 This  implies that
 $$
 c_{\inf}^*(\xi)\ge
\inf_{\mu>0}\frac{\lambda_0(\xi,\mu,a_n)}{\mu}.
$$
 Thus,
 $$
 c^*_{\inf}(\xi)\geq \inf_{\mu>0}\frac{\lambda_0(\xi,\mu,a_0)-2\epsilon}{\mu}.
 $$
Letting $\epsilon\to 0$,  we have
\begin{equation}
\label{speed-eq2}
 c^*_{\inf}(\xi)\geq \inf_{\mu>0}\frac{\lambda_0(\xi,\mu,a_0)}{\mu}.
 \end{equation}

 By \eqref{speed-eq1} and \eqref{speed-eq2},
 $$c^*_{\sup}(\xi)=c^*_{\inf}(\xi)=\inf_{\mu>0}\frac{\lambda_0(\xi,\mu,a_0)}{\mu}.
 $$
 Hence $c^*(\xi)$ exists and
 $$
 c^*(\xi)=\inf_{\mu>0}\frac{\lambda_0(\xi,\mu,a_0)}{\mu}.
 $$

\end{proof}

\begin{proof}[Proof Theorem \ref{spreading-thm2}]
(1) It follows from the definition of $c^*(\pm\xi)$ directly.

(2) If follows from the arguments in \cite[Theorem E (2)]{ShZh2} and \cite[Theorem D (2)]{ShZh0}.

(3) and (4)  can be proved by the similar arguments as in \cite[Theorem E (3), (4)]{ShZh2}.
\end{proof}

\section{Traveling Wave Solutions}

In this section, we explore the existence  of traveling wave solutions of \eqref{main-eq} connecting
$0$ and $u^*$. Throughout this section, we assume (H1) and (H2).

\begin{definition}[Traveling wave solution]
\label{wave-def}
\begin{itemize}
\item[(1)]
An entire solution $u(t,x)$ of \eqref{main-eq} is called a {\rm traveling wave solution} connecting
$u^*(\cdot,\cdot)$ and $0$ and propagating in the direction of $\xi$ with speed $c$ if there is a bounded
 function $\Phi:\RR^N\times\RR\times  \RR^N\to \RR^+$ satisfying  that
 $\Phi(\cdot,\cdot,\cdot)$ is Lebesgue measurable,  $u(t,\cdot;\Phi(\cdot,0,z),z)$ exists
for all $t\in\RR$,
\begin{equation}
\label{wave-eq1} u(t,x)= u(t,x;\Phi(\cdot,0,0),0)=\Phi(x-ct\xi,t,ct\xi)\quad\forall t\in\RR,\,\, x\in\RR^N,
\vspace{-.05in}\end{equation}
\begin{equation}
\label{def-eq1}
u(t,x;\Phi(\cdot,0,z),z)=\Phi(x-ct\xi,t,z+ct\xi)\quad\forall t\in\RR,\,\, x,z\in\RR^N,
\end{equation}
\begin{equation}
\label{def-eq2}
\lim_{x\cdot\xi \to -\infty}\big(\Phi(x,t,z)-u^*(t,x+z)\big)=0,\quad \lim_{x\cdot\xi\to\infty}\Phi(x,t,z)=0
\end{equation}
uniformly in $(t,z)\in\RR\times \RR^N$,
\begin{equation}
\label{def-eq3}
\Phi(x,t,z-x)=\Phi(x^{'},t,z-x^{'})\quad \forall x,x^{'}\in\RR^N\,\, \text{with}\,\, x\cdot\xi=x^{'}\cdot\xi,
\end{equation}
and
\begin{equation}
\label{def-eq4}
\Phi(x,t+T,z)=\Phi(x,t,z+p_i{\bf e_i})=\Phi(x,t,z)\quad \forall x,z\in\RR^N.
\end{equation}

\item[(2)] A bounded  function $\Phi:\RR^N\times\RR\times \RR^N\to \RR^+$ is said to {\rm generate a traveling wave solution of
\eqref{main-eq} in the direction of $\xi$  with speed $c$} if it is Lebesgue measurable and  satisfies \eqref{def-eq1}-\eqref{def-eq4}.
\end{itemize}
\end{definition}

\begin{remark}
\label{wave-rk} Suppose that $u(t,x)=\Phi(x-ct\xi,t,c t\xi)$ is a
traveling wave solution of \eqref{main-eq} connecting $u^*(\cdot,\cdot)$
and $0$ and propagating in the direction of $\xi$ with speed $c$.
Then $u(t,x)$ can be written as
\begin{equation}
\label{wave-eq2}
 u(t,x)=\Psi(x\cdot\xi-ct,t,x)
\end{equation}
for some $\Psi:\RR\times\RR\times \RR^N\to \RR$ satisfying that
$\Psi(\eta,t+T,z)=\Psi(\eta,t,z+p_i{\bf e_i})=\Psi(\eta,t,z)$, $\lim_{\eta\to -\infty} \Psi(\eta,t,z)=u^*(t,z)$, and
$\lim_{\eta\to\infty}\Psi(\eta,t,z)=0$ uniformly in $(t,z)\in\RR\times \RR^N$. In fact, let
$\Psi(\eta,t,z)=\Phi(x,t,z-x)$
for $x\in\RR^N$ with $x\cdot\xi=\eta$. Observe that $\Psi(\eta,t,z)$
is well defined and has the above mentioned properties.
\end{remark}

For convenience, we introduce the following assumption:

\medskip
\noindent {\bf (H3)}
 {\it
 For
every $\xi\in S^{N-1}$ and   $\mu\geq 0$,
$\lambda_0(\xi,\mu,a_0)$ is the principal eigenvalue of $-\p_t+\mathcal{K}_{\xi,\mu}-I+a_0(\cdot,\cdot)I$,
 where $a_0(t,x)=f(t,x,0)$. }
\medskip

We now state the main results of this section.  For given $\xi\in S^{N-1}$ and $c>c^*(\xi)$, let $\mu\in (0,\mu^*(\xi))$ be such that
$$
c=\frac{\lambda_0(\xi,\mu,a_0)}{\mu}.
$$
Let $\phi(\mu,\cdot,\cdot)\in\mathcal{X}_p^+$ be the positive principal eigenfunction of
$-\p_t+\mathcal{K}_{\xi,\mu}-I+a_0(\cdot)I$ with $\|\phi(\mu,\cdot,\cdot)\|_{\mathcal{X}_p}=1$.

\begin{theorem}[Existence of traveling wave solutions]
\label{existence-thm}
Assume (H1)-(H3).
 For any $\xi\in S^{N-1}$ and $c> c^*(\xi)$, there is a bounded continuous function $\Phi:\RR^N\times\RR\times \RR^N\to \RR^+$ such that
  $\Phi(\cdot,\cdot,\cdot)$ generates a traveling
wave solution  connecting $u^*(\cdot,\cdot)$ and $0$ and propagating
in the direction of $\xi$ with speed $c$. Moreover,
$\ds\lim_{x\cdot\xi\to \infty}\frac{\Phi(x,t,z)}{e^{-\mu x\cdot\xi}\phi(\mu,t,x+z)}=1$
uniformly in $t\in\RR$ and $z\in\RR^N$.
\end{theorem}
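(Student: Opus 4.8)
The plan is to realize $\Phi$ as a monotone limit of solutions of the spatially shifted equation \eqref{main-shifted-eq}, trapped between an explicit exponential super-solution coming from the principal eigenfunction and a truncated sub-solution carrying a faster-decaying correction. Fix $\xi\in S^{N-1}$ and $c>c^*(\xi)$ and let $\mu\in(0,\mu^*(\xi))$ with $c=\lambda_0(\xi,\mu,a_0)/\mu$ and $\phi=\phi(\mu,\cdot,\cdot)\in\mathcal{X}_p^+$ be as in the statement (such $\mu$ exists and is unique by Theorem \ref{spreading-thm1} and Propositions \ref{PE-minimum-prop} and \ref{PE-useful-prop}(i); $\phi$ is strictly positive, continuous, $T$-periodic in $t$ and $p_i$-periodic in $x$, hence bounded above and below by positive constants). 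A direct substitution, using the relation $\lambda_0(\xi,\mu,a_0)=\mu c$, shows that in the moving frame $e^{-\mu(x\cdot\xi-ct)}\phi(\mu,t,x+z)$ solves the linearization of \eqref{main-shifted-eq} at $u=0$; since $uf(t,x+z,u)\le u\,a_0(t,x+z)$ for $u\ge0$ by (H1) and $u^*(t,x+z)$ is an exact solution of \eqref{main-shifted-eq}, the bounded continuous function
\[
\overline\Phi(x,t,z):=\min\bigl\{e^{-\mu x\cdot\xi}\phi(\mu,t,x+z),\ u^*(t,x+z)\bigr\}
\]
generates a super-solution of \eqref{main-shifted-eq} travelling with speed $c$.

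For the sub-solution, choose $0<\nu<\mu$ with $\mu+\nu<\mu^*(\xi)$; by Propositions \ref{PE-convex-prop} and \ref{PE-useful-prop}(i) the map $\rho\mapsto\lambda_0(\xi,\rho,a_0)/\rho$ is strictly decreasing on $(0,\mu^*(\xi))$, so $\lambda_0(\xi,\mu+\nu,a_0)<(\mu+\nu)c$, and $\phi(\mu+\nu,\cdot,\cdot)\in\mathcal{X}_p^+$ is a strictly positive eigenfunction by (H3). Since $f$ is $C^1$ and $f(t,x,0)-f(t,x,u)=O(u)$ uniformly for $0\le u\le\|u^*\|_{\mathcal{X}_p}$, I expect that for $d>0$ large the function
\[
\underline\Phi(x,t,z):=\max\bigl\{e^{-\mu x\cdot\xi}\phi(\mu,t,x+z)-d\,e^{-(\mu+\nu)x\cdot\xi}\phi(\mu+\nu,t,x+z),\ 0\bigr\}
\]
is a (generalized) sub-solution of \eqref{main-shifted-eq} travelling with speed $c$ and satisfies $\underline\Phi\le\overline\Phi$: on the open set where the first term dominates, the negative contribution of the correction is comparable to $((\mu+\nu)c-\lambda_0(\xi,\mu+\nu,a_0))\,e^{-(\mu+\nu)(x\cdot\xi-ct)}>0$ and, because $\mu+\nu<2\mu$, dominates the $O(u^2)$ defect (comparable to $e^{-2\mu(x\cdot\xi-ct)}$) produced by the nonlinearity once $x\cdot\xi-ct$ is large; increasing $d$ pushes the support of $\underline\Phi$ into that region and simultaneously forces $\underline\Phi\le\inf u^*\le u^*$, while $\underline\Phi(x,t,z)\le e^{-\mu x\cdot\xi}\phi(\mu,t,x+z)$ is automatic. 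Handling the kink of $\underline\Phi$ in the comparison principle and the a priori equicontinuity used below are routine for nonlocal dispersal equations.

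With these building blocks, for each $n\ge1$ let $u_n(t,x;z)$ solve \eqref{main-shifted-eq} on $[-nT,\infty)$ with initial value at $t=-nT$ equal to the sub-solution profile there. By the comparison principles (Propositions \ref{comparison-linear-prop} and \ref{comparison-nonlinear-prop}, in the version for generalized sub/super-solutions) one gets $\underline\Phi\le u_n\le\overline\Phi$ on $[-nT,\infty)$, and $u_{n+1}\ge u_n$ on $[-nT,\infty)$ because $u_{n+1}(-nT,\cdot;z)$ already lies above the sub-solution value there. The uniform bound $0\le u_n\le\|u^*\|_{\mathcal{X}_p}$, together with the standard regularity and equicontinuity of solutions of \eqref{main-eq} (boundedness of $\mathcal{K}u_n$ and of $\partial_t u_n$), gives $u_n\nearrow u_\infty$ locally uniformly, with $u_\infty(\cdot,\cdot;z)$ an entire solution of \eqref{main-shifted-eq} and $\underline\Phi\le u_\infty\le\overline\Phi$. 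Comparing $u_n$ with its time/space translates and invoking uniqueness yields the identities $u_\infty(t,x;z+p_i{\bf e_i})=u_\infty(t,x;z)$ and $u_\infty(t+T,x;z)=u_\infty(t,x-cT\xi;z+cT\xi)$, and — since the sub-solution data depends on $z$ only through $z\cdot\xi$ — that $u_\infty(t,x;z)$ depends on $(x,z)$ only through $x+z$ and $x\cdot\xi$.

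Finally, set $\Phi(x,t,z):=u_\infty(t,x+ct\xi;z-ct\xi)$. Then $\Phi$ is bounded and continuous, $u\bigl(t,\cdot;\Phi(\cdot,0,z),z\bigr)=u_\infty(t,\cdot;z)=\Phi(\cdot-ct\xi,t,z+ct\xi)$, which is \eqref{def-eq1}; \eqref{def-eq4} follows from the two identities above and the $T$-periodicity of $\phi$ and $u^*$, and \eqref{def-eq3} follows from the dependence only through $x+z$ and $x\cdot\xi$. Translating the squeeze $\underline\Phi\le u_\infty\le\overline\Phi$ gives
\[
\max\bigl\{e^{-\mu x\cdot\xi}\phi(\mu,t,x+z)-d\,e^{-(\mu+\nu)x\cdot\xi}\phi(\mu+\nu,t,x+z),0\bigr\}\le\Phi(x,t,z)\le\min\bigl\{e^{-\mu x\cdot\xi}\phi(\mu,t,x+z),u^*(t,x+z)\bigr\};
\]
letting $x\cdot\xi\to+\infty$ this forces $\Phi(x,t,z)\to0$ and, dividing by $e^{-\mu x\cdot\xi}\phi(\mu,t,x+z)$, that the ratio tends to $1$, both uniformly in $(t,z)$ — the asserted asymptotics — while as $x\cdot\xi\to-\infty$ the squeeze only gives $0\le\Phi\le u^*$. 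Upgrading this to $\Phi(x,t,z)-u^*(t,x+z)\to0$ is where the real work lies and is the step I expect to be the main obstacle: the linear squeeze is useless there, so one must use that $u_\infty(\cdot,\cdot;z)$ is an entire solution dominating the nontrivial sub-solution $\underline\Phi$, whose positive bump can be placed inside any prescribed slab by choosing the base time far in the past; applying the spreading lower estimate of Theorem \ref{spreading-thm1} and Lemma \ref{main-lm1} to \eqref{main-shifted-eq} started from such a bump, and interlacing it with the shift identity $u_\infty(t+T,\cdot;z)=u_\infty(t,\cdot-cT\xi;z+cT\xi)$ and the $T$-periodicity of $u^*$ (needed precisely because $u_\infty$ is periodic only modulo the translation by $cT\xi$), one converts ``spreading at speed $c^*(\xi)<c$'' into $\Phi(x,t,z)\to u^*(t,x+z)$ as $x\cdot\xi\to-\infty$, uniformly in $(t,z)$, which finishes the verification of \eqref{def-eq2}.
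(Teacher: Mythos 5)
Your blueprint matches the paper's in most respects: same exponential super-solution capped by $u^*$, essentially the same two-term sub-solution $e^{-\mu s}\phi-d\,e^{-\mu_1 s}\phi_1$ with $\mu<\mu_1<\min\{2\mu,\mu^*(\xi)\}$, the same monotone iteration in steps of $T$ (the paper's Lemma~\ref{profile-exist-lm}), the same use of the spreading estimate from Lemma~\ref{main-lm1} to drive the left tail to $u^*$, and the same verification of \eqref{def-eq1}--\eqref{def-eq4}. The one structural difference in the barriers — you truncate the sub-solution at $0$, the paper caps it from below by $b\phi_0$ on a left half-space (see \eqref{aux-sub-eq0}) — is minor but not cosmetic: the paper's version has a uniform positive floor on $\{x\cdot\xi\le M+ct\}$, which makes Proposition~\ref{sup-sub-solution-prop} immediate and feeds cleanly into Lemma~\ref{main-lm1}; with your version you would need an extra spreading step to propagate positivity leftward before applying that lemma.

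However, there is a genuine gap, and it is at the step you dismiss as ``routine'': continuity of the limit $u_\infty$. For nonlocal dispersal equations the solution operator has no spatial smoothing or compactifying effect — this is exactly why, as the introduction points out, the Liang--Zhao/Weinberger machinery cannot be applied here. Boundedness of $\mathcal K u_n$ and $\partial_t u_n$ gives equi-Lipschitz regularity in $t$, not equicontinuity in $x$; and since $u_n$ is obtained by running the flow over a time window of length growing like $nT$, even the crude estimate that the modulus of continuity at time $t$ is controlled by that at time $t-\tau$ (with a constant growing in $\tau$) gives nothing uniform in $n$. So a monotone pointwise limit of continuous $u_n$'s is, a priori, only semicontinuous, and the squeeze $\underline\Phi\le u_\infty\le\overline\Phi$ pins $u_\infty$ down only where the two barriers touch asymptotically (the far right). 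The paper's remedy is to build \emph{two} monotone limits, $\Phi^+$ from the super-solution iterates (non-increasing, hence upper semicontinuous) and $\Phi^-$ from the sub-solution iterates (non-decreasing, hence lower semicontinuous), verify each generates a measurable traveling wave, and then prove $\Phi^+=\Phi^-$ by a sliding argument on $\rho(t)=\inf\{\ln\alpha:\alpha\ge1,\ \alpha^{-1}\Phi^-\le\Phi^+\le\alpha\Phi^-\}$: $\rho$ is shown to be non-increasing, while if $\rho(0)>0$ one uses \eqref{limit-eq1-1}, the uniform positive lower bound on the finite slab, and a strict-improvement computation with the extra $\delta_\epsilon$ term to get $\rho(T)<\rho(0)$, contradicting the forced periodicity $\rho(T)=\rho(0)$. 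That argument is the longest and most delicate part of the paper's proof, it is precisely what upgrades a measurable profile to a continuous one, and your proposal omits it entirely. Without it you obtain at best a measurable traveling wave, which is strictly weaker than the theorem's assertion.
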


\subsection{ Sub- and super-solutions}
\label{subsection-sub-super-solu}

In this subsection, we construct some sub- and super-solutions of \eqref{main-eq} to be used in the proof of Theorem \ref{existence-thm}.
 Throughout this subsection, we assume (H1)-(H3) and put $a_0(t,x)=f(t,x,0)$.

For given $\xi\in S^{N-1}$, let $\mu^*(\xi)$ be such that
\vspace{-.05in}$$
c^*(\xi)=\frac{\lambda_0(\xi,\mu^*(\xi),a_0)}{\mu^*(\xi)}.
\vspace{-.05in}$$
Fix $\xi\in S^{N-1}$ and $c>c^*(\xi)$. Let $0<\mu<\mu_1<\min\{2\mu,\mu^*(\xi)\}$ be such that
$
c=\frac{\lambda_0(\xi,\mu,a_0)}{\mu}
$
 and
$
\frac{\lambda_0(\xi,\mu,a_0)}{\mu}>\frac{\lambda_0(\xi,\mu_1,a_0)}{\mu_1}>c^*(\xi).
$
Put
$$
\phi_0(\cdot,\cdot)=\phi(0,\cdot,\cdot),
$$
and
 $$
 \phi(\cdot,\cdot)=\phi(\mu,\cdot,\cdot),\,\,\, \phi_1(\cdot,\cdot)=\phi(\mu_1,\cdot,\cdot).
 $$
If no confusion occurs, we may write $\lambda_0(\mu,\xi,a_0)$ as $\lambda(\mu)$.

For given $d>0$, let
\begin{equation*}
\underline v(t,x;z,d)= e^{-\mu (x\cdot\xi-ct)}\phi(t,x+z)-d
e^{-\mu_1(x\cdot\xi-ct)}\phi_1(t,x+z).
\end{equation*}
Observe that for given $0<b\ll 1$, there is $M>0$ such that
\begin{equation}
\label{u-underbar-eq1}
\begin{cases}
b\phi_0(t,x+z)\le \underline v(t,x;z,d) \quad \forall\,\, M-2\delta_0\le x\cdot\xi-ct\le M\cr
\underline v(t,x;z,d)>0\quad \forall\,\, x\cdot\xi -ct>M\cr
b\phi_0(t,x+z)\le e^{-\mu (x\cdot\xi-ct)}\phi(t,x+z)\quad \forall\,\, x\cdot\xi\le M+ct\cr
b\phi(t,x+z)\le u^*(t,x+z)\quad \forall x\in\RR^N,
\end{cases}
\end{equation}
where $\delta_0$ is such that ${\rm supp} \big(k(\cdot)\big)\subset \{z\in\RR^N\,|\, \|z\|<\delta_0\}$.
Let $b>0$ and $M>0$ be such that \eqref{u-underbar-eq1} holds and
\begin{equation}
\label{aux-sub-eq0}
\underline u(t,x;z,d,b)=\begin{cases}
\max\{b \phi_0(t,x+z), \underline v(t,x;z,d)\},\quad x\cdot\xi\le M+ct\cr
\underline v(t,x;z,d),\quad x\cdot\xi \ge M+ct.
\end{cases}
\end{equation}

\begin{proposition}
\label{sub-solution-prop1}
\begin{itemize}
 \item[(1)] There is $d^*>0$ such that for  any $z\in\RR^N$ and $d\ge d^*$,
  $\underline v(t,x;z,d)$ is a sub-solution of
  \eqref{main-shifted-eq}.

  \item[(2)] There is $b_0>0$ such that for any $0<b\le b_0$ and $z\in\RR^N$, \eqref{u-underbar-eq1} holds and $u(t,x;z):=b\phi_0(t,x+z)$ is the sub-solution of \eqref{main-shifted-eq}.

  \item[(3)] For $d\ge d^*$ and $0<b\le b_0$, $u(t,x;\underline u(0,\cdot;z,d,b),z)\ge \underline u(t,x;z,d,b)$ for $t\ge 0$.
\end{itemize}
\end{proposition}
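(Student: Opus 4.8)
The plan is to prove (1) and (2) by substituting the explicit functions into the sub-solution inequality for \eqref{main-shifted-eq} and using the eigenvalue identities satisfied by $\phi_0$, $\phi$, $\phi_1$, and then to deduce (3) from (1), (2) and the comparison principle of Proposition \ref{comparison-nonlinear-prop}. For (1): since $c=\lambda(\mu)/\mu$, a direct computation — shifting the eigenfunction relation \eqref{eigenvalue-eq0} by $z$ in the spatial variable — shows that $v_1(t,x):=e^{-\mu(x\cdot\xi-ct)}\phi(t,x+z)$ solves the $z$-translated linearization $\partial_t u=\mathcal Ku-u+a_0(t,x+z)u$ exactly, whereas $v_2(t,x):=e^{-\mu_1(x\cdot\xi-ct)}\phi_1(t,x+z)$ satisfies $\partial_t v_2=\mathcal Kv_2-v_2+a_0(t,x+z)v_2+\big(\mu_1 c-\lambda(\mu_1)\big)v_2$, the coefficient $\mu_1 c-\lambda(\mu_1)$ being strictly positive because $c=\lambda(\mu)/\mu>\lambda(\mu_1)/\mu_1$.

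Subtracting, $\underline v=v_1-dv_2$ satisfies $\partial_t\underline v=\mathcal K\underline v-\underline v+a_0(t,x+z)\underline v-d\big(\mu_1 c-\lambda(\mu_1)\big)v_2$, so the sub-solution inequality for \eqref{main-shifted-eq} is equivalent to $\underline v\big(f(t,x+z,0)-f(t,x+z,\underline v)\big)\le d\big(\mu_1 c-\lambda(\mu_1)\big)v_2$. Where $\underline v\le 0$ this is immediate, since the left side vanishes by (H0) while the right side is positive. Where $\underline v>0$ one has $0<\underline v\le v_1$, and — enlarging $d^*$ so that $d^*\ge\max\phi/\min\phi_1$, which forces $x\cdot\xi-ct>0$ whenever $\underline v>0$ — one bounds the left side by $L\underline v^2\le Lv_1^2=Le^{-2\mu(x\cdot\xi-ct)}\phi^2$, where $L$ is a Lipschitz constant of $f(t,x,\cdot)$ on $[0,\max\phi]$; since $\mu_1<2\mu$ and $x\cdot\xi-ct>0$ give $e^{-2\mu(x\cdot\xi-ct)}\le e^{-\mu_1(x\cdot\xi-ct)}$, the inequality follows once $d^*$ also exceeds $L(\max\phi)^2/\big((\mu_1 c-\lambda(\mu_1))\min\phi_1\big)$. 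All constants are uniform in $z$, so a single $d^*$ works for every $z$.

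For (2): $\phi_0$ satisfies $\partial_t\phi_0=\mathcal K\phi_0-\phi_0+a_0\phi_0-\lambda(0)\phi_0$ with $\lambda(0)=\lambda_0(a_0)>0$ by (H2), so $b\phi_0(t,x+z)$ is a sub-solution of \eqref{main-shifted-eq} exactly when $f(t,x+z,0)-f(t,x+z,b\phi_0)\le\lambda(0)$, which holds once $b\le b_0:=\lambda(0)/(L'\max\phi_0)$; the remaining requirements in \eqref{u-underbar-eq1} are elementary inequalities among $\phi_0$, $\phi$, $u^*$ that hold for small $b$ once $d\ge d^*$ and $M$ are fixed. For (3), note that $\underline u$ equals $\underline v$ on $\{x\cdot\xi\ge M+ct\}$ and equals $\max\{b\phi_0(\cdot,\cdot+z),\underline v\}$ on $\{x\cdot\xi\le M+ct\}$, and that by \eqref{u-underbar-eq1} (first line) together with $\mathrm{supp}\,k\subset\{\|z\|<\delta_0\}$ the two prescriptions already coincide — both being $\underline v$ — on the $2\delta_0$-neighborhood of the interface $\{x\cdot\xi=M+ct\}$; hence $\underline u$ is a bounded, continuous sub-solution of \eqref{main-shifted-eq}, its only corners lying on $\{b\phi_0=\underline v\}$, where it is the maximum of two sub-solutions and so still satisfies the sub-solution inequality with upper Dini $t$-derivatives, which is all the comparison principle uses. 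Proposition \ref{comparison-nonlinear-prop}(1) on $[0,T)$, combined with the $T$-periodicity of $f$ and the translation identity $\underline u(t+T,x;z,d,b)=\underline u(t,x-cT\xi;z+cT\xi,d,b)$, then yields $u(t,\cdot;\underline u(0,\cdot;z,d,b),z)\ge\underline u(t,\cdot;z,d,b)$ for all $t\ge 0$ by iteration over periods.

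The main obstacle is the quadratic estimate in (1): one must dominate the error term $\underline v\big(f(t,x+z,0)-f(t,x+z,\underline v)\big)$, which is $O(e^{-2\mu(x\cdot\xi-ct)})$, by $d(\mu_1 c-\lambda(\mu_1))v_2=O(e^{-\mu_1(x\cdot\xi-ct)})$ uniformly in $(t,x,z)$, and this is precisely why the exponents are required to satisfy $\mu<\mu_1<\min\{2\mu,\mu^*(\xi)\}$: the condition $\mu_1<2\mu$ makes $v_2$ decay more slowly than the error on the relevant half-line $\{x\cdot\xi-ct>0\}$, and $\mu<\mu_1<\mu^*(\xi)$ makes $\mu_1 c-\lambda(\mu_1)$ strictly positive. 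A secondary technical point is the corner set $\{b\phi_0=\underline v\}$ of $\underline u$; the observation above, that near the gluing interface $\underline u$ is simply the smooth sub-solution $\underline v$, confines all corners to the region where $\underline u=\max\{b\phi_0,\underline v\}$ with both arguments genuine sub-solutions, so no regularity — hence no use of the comparison principle — is lost.
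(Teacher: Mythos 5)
Your arguments for (1) and (2) are correct and are precisely the time-periodic adaptation of the computations in the space-periodic reference \cite{ShZh1} to which the paper defers; the key points (exactness of $v_1$, the strictly positive drift $\mu_1 c-\lambda(\mu_1)$ on $v_2$, the restriction to $\{x\cdot\xi-ct>0\}$ via $d^*\ge\max\phi/\min\phi_1$, the quadratic-versus-$e^{-\mu_1(\cdot)}$ comparison using $\mu_1<2\mu$, and for (2) the role of $\lambda(0)>0$) all check out and give a $z$-uniform $d^*,b_0$.

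For (3), however, you take a genuinely different route from the paper, and the route as written has a gap. You want to treat $\underline u(\cdot,\cdot;z,d,b)$ as a sub-solution and feed it into Proposition~\ref{comparison-nonlinear-prop}(1). But by the paper's own definition immediately preceding that proposition, a sub-solution of \eqref{main-eq}/\eqref{main-shifted-eq} must have $\p_t u$ existing and \emph{continuous} on $[0,T)\times\RR^N$. Your $\underline u$, being a pointwise maximum of two sub-solutions on $\{x\cdot\xi\le M+ct\}$, has corners on the coincidence set $\{b\phi_0=\underline v\}$ and so fails this hypothesis. Your parenthetical remark that the comparison principle ``only uses upper Dini $t$-derivatives'' is exactly the point that would need to be proved; it is an assertion about the (unstated, cited-only) proof of Proposition~\ref{comparison-nonlinear-prop}(1), and the proposal does not establish it. Your observation that the gluing interface $x\cdot\xi=M+ct$ causes no trouble — because by the first line of \eqref{u-underbar-eq1} one has $\underline u\equiv\underline v$ on a $2\delta_0$-band, while ${\rm supp}\,k$ has radius $<\delta_0$ — is correct and is needed in either approach, but it does not remove the corners inside $\{x\cdot\xi\le M+ct\}$.

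The paper's proof of (3) is specifically built to bypass this regularity issue: it sets $\tilde w=e^{Ct}\big(u-\underline v\big)$ and $\bar w=e^{Ct}\big(u-b\phi_0\big)$ (both comparing $u$ against the two \emph{smooth} sub-solutions separately), derives the integral inequalities \eqref{aux-sub-eq1}–\eqref{aux-sub-eq2} for each, then observes that $w:=e^{Ct}(u-\underline u)$ is $\tilde w$ on one region and $\min\{\tilde w,\bar w\}$ on the other, and combines the two inequalities into a single integral inequality for $w$ (using $w\le\tilde w$, $w\le\bar w$, $w(0,\cdot)=0$, positivity of $\mathcal K_0$, and $C$ chosen so that $-1+C+\tilde a,-1+C+\bar a>0$), after which the Gronwall-type argument from \cite[Prop.\ 2.1(1)]{ShZh0} gives $w\ge 0$. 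In effect the paper proves by hand, at the integral level, exactly the Dini-type comparison you invoke, rather than citing an extension of Proposition~\ref{comparison-nonlinear-prop}. Your approach can certainly be completed — either by proving the Dini-derivative version of the comparison principle (for Lipschitz-in-$t$ sub-solutions, via the a.e.\ fundamental theorem of calculus) or, more in the spirit of the paper, by reorganizing your argument into the $\tilde w,\bar w,w$ integral-inequality form — but as written this step is asserted rather than established.
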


\begin{proof}
(1) It follows from the similar arguments as in \cite[Propsotion 3.2]{ShZh1}.

(2) It follows from the similar arguments as in \cite[Proposition 3.3]{ShZh1}.

(3)  Let $\tilde w(t,x;z)=e^{Ct}\big(u(t,x;\underline u(0,\cdot;z,d,b),z)-\underline v(t,x;z,d)\big)$, where $C$ is some positive constant to
be determined later. Recall that
$u(t,x;\underline u(0,\cdot;z,d,b),z)$ is the solution of \eqref{main-shifted-eq} with
$u(0,x;\underline u(0,\cdot;z,d,b),z)=\underline u(0,x;z,d,b)$. Then
$$
\tilde w_t(t,x;z)\ge (\mathcal{K}_0 \tilde w)(t,x;z)+(-1+C+\tilde a(t,x,z))\tilde w(t,x;z)
$$
where
$$
(\mathcal{K}_0\tilde w)(t,x,;z)=\int_{\RR^N}k(y-x)\tilde w(t,y;z)dy
$$
 and
\begin{align*}
\tilde a(t,x,z)&= f(t,x,u(t,x;\underline u(0,\cdot;z,d,b),z))\\
&\quad +\underline v(t,x;z,d)\int_0^1 f_u(t,x, \tau (u(t,x;\underline u(0,\cdot;z,d,b),z)-\underline v(t,x;z,d)))d\tau.
\end{align*}
Hence
\begin{equation}
\label{aux-sub-eq1}
\tilde w(t,x;z)\ge \tilde w(0,x;z)+\int_0^ t\Big[ (\mathcal{K}_0 \tilde w)(s,x,z)+(-1+C+\tilde a(s,x;z))\tilde w(s,x;z)\Big] ds
\end{equation}
for all $x\in\RR^N$. Similarly,
let $\bar w(t,x;z)=e^{Ct}(u(t,x;z,\underline u(0,\cdot;z,d,b))-b\phi_0(t,x+z))$. Then
\begin{equation}
\label{aux-sub-eq2}
\bar w(t,x;z)\ge \bar w(0,x;z)+\int_0^t \Big[(\mathcal{K}_0\bar w)(s,x,z)+(-1+C+\bar a(s,x,z))\bar w(s,x;z) \Big] ds
\end{equation}
for $x\in\RR^N$, where
\begin{align*}
\bar a(t,x,z)&= f(t,x,u(t,x;\underline u(0,\cdot;z,d,b),z))\\
&\quad +b\phi_0(t,x+z)\int_0^1 f_u(t,x, \tau (u(t,x;\underline u(0,\cdot;z,d,b),z)-b\phi_0(t,x+z)))d\tau.
\end{align*}
Let $w(t,x;z)=e^{Ct}\big(u(t,x;\underline u(0,\cdot;z,d,b),z)-\underline u(t,x;z,d,b)\big)$.
Choose $C>0$ such that $-1+C+\tilde a(t,x,z)>0$ and $-1+C+\bar a(t,x,z)>0$. Note that
$$
w(t,x,z)=\begin{cases}
\tilde w(t,x;z)\quad {\rm for}\quad x\cdot\xi\ge M+ct\cr
\min\{\tilde w(t,x;z),\bar w(t,x;z)\}\quad {\rm for}\quad x\cdot\xi\le M+ct.
\end{cases}
$$
By \eqref{aux-sub-eq0}, \eqref{aux-sub-eq1}, and \eqref{aux-sub-eq2},
\begin{align*}
&\tilde w(t,x;z)\\
&\ge w(0,x;z)+\int_0^ t\Big[ (\mathcal{K}_0  w)(s,x,z)+(-1+C+\tilde a(s,x;z)) w(s,x;z)\Big] ds\quad {\rm for}\,\, x\in\RR^N
\end{align*}
and
\begin{align*}
&\bar w(t,x;z)\\
&\ge  w(0,x;z)+\int_0^t \Big[(\mathcal{K}_0  w)(s,x,z)+(-1+C+\bar a(s,x,z)) w(s,x;z) \Big] ds\quad {\rm for}\,\, x\cdot\xi\le M+ct.
\end{align*}
It then follows that
\begin{equation*}
 w(t,x;z)\ge w(0,x;z)+\int_0^ t\Big[ (\mathcal{K}_0  w)(s,x,z)+(-1+C+\tilde a(s,x;z)) w(s,x;z)\Big] ds\quad {\rm for}\,\, x\in\RR^N.
\end{equation*}
 By the arguments in \cite[Proposition 2.1(1)]{ShZh0}, we have $w(t,x;z)\ge 0$ for $t\ge 0$, $x,z\in\RR^N$, and then
$$u(t,x;\underline u(0,\cdot;z,d,b),z)\ge \underline u(t,x;z,d,b)
$$
for $t\ge 0$ and $x,z\in\RR^N$.
\end{proof}

Let
$$
\bar v(t,x;z)=e^{-\mu(x\cdot\xi-ct)}\phi(t,x+z)
$$
and
\begin{equation}
\label{u-abovebar-eq}
\bar u(t,x;z)=\min\{\bar v(t,x;z),u^{*}(t,x+z)\}.
\end{equation}


\begin{proposition}
\label{sup-solution-prop1}
\begin{itemize}
\item[(1)]
 For any $z\in\RR^N$,
  $\bar v(t,x;z)$ is a super-solution of \eqref{main-shifted-eq}.

\item[(2)] $u(t,x;\bar u(0,\cdot;z),z)\le \bar u(t,x;z)$ for $t\ge 0$.

\end{itemize}
\end{proposition}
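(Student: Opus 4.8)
The plan for part (1) is a direct substitution. Writing $\bar v(t,x;z)=e^{-\mu(x\cdot\xi-ct)}\phi(t,x+z)$ with $\phi=\phi(\mu,\cdot,\cdot)$ and using $c=\lambda(\mu)/\mu$ (so $\mu c=\lambda(\mu)$), I would first compute
\[
\p_t\bar v(t,x;z)=e^{-\mu(x\cdot\xi-ct)}\big(\lambda(\mu)\phi(t,x+z)+\phi_t(t,x+z)\big),
\]
and, factoring $e^{-\mu(x\cdot\xi-ct)}$ out of the convolution,
\[
\int_{\RR^N}k(y-x)\bar v(t,y;z)\,dy-\bar v(t,x;z)=e^{-\mu(x\cdot\xi-ct)}\Big(\int_{\RR^N}e^{-\mu(y-x)\cdot\xi}k(y-x)\phi(t,y+z)\,dy-\phi(t,x+z)\Big).
\]
Evaluating the eigenvalue equation \eqref{eigenvalue-eq} for $\phi$ (with $a=a_0$ and eigenvalue $\lambda(\mu)$) at the shifted point $x+z$ and changing variables $y\mapsto y+z$ in the integral, the bracket on the right equals $\phi_t(t,x+z)+(\lambda(\mu)-a_0(t,x+z))\phi(t,x+z)$. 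Substituting these two identities into the super-solution inequality for \eqref{main-shifted-eq}, cancelling $e^{-\mu(x\cdot\xi-ct)}\phi_t(t,x+z)$ from both sides and dividing by $e^{-\mu(x\cdot\xi-ct)}\phi(t,x+z)>0$, the inequality collapses to $a_0(t,x+z)\ge f(t,x+z,\bar v(t,x;z))$, i.e. $f(t,x+z,0)\ge f(t,x+z,\bar v(t,x;z))$, which holds for $\bar v(t,x;z)>0$ by (H1). This proves (1).

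For part (2) I would regard $\bar u=\min\{\bar v,\,u^*(\cdot,\cdot+z)\}$ as the minimum of two super-solutions of \eqref{main-shifted-eq}: $\bar v$ by part (1), and $v(t,x):=u^*(t,x+z)$ because substituting $y\mapsto y+z$ in the equation satisfied by $u^*$ shows that $v$ solves \eqref{main-shifted-eq} exactly. Since $\bar u(0,\cdot;z)$ is precisely the initial datum of $u(\cdot,\cdot;\bar u(0,\cdot;z),z)$, the bound $u(t,x;\bar u(0,\cdot;z),z)\le\bar u(t,x;z)$ should follow by comparison; but $\bar u$ need not be $C^1$ in $t$ along the switching set $\{\bar v=u^*(\cdot,\cdot+z)\}$, so instead of invoking Proposition \ref{comparison-nonlinear-prop}(1) directly I would repeat the integral-inequality argument in the proof of Proposition \ref{sub-solution-prop1}(3). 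Concretely, set $w(t,x;z)=e^{Ct}\big(\bar u(t,x;z)-u(t,x;\bar u(0,\cdot;z),z)\big)$, which equals $\min\{\bar w,\tilde w\}$ where $\bar w$ and $\tilde w$ are the corresponding exponentially-weighted differences against $\bar v$ and against $u^*(\cdot,\cdot+z)$; show (via the mean-value form of $f$, as in Proposition \ref{sub-solution-prop1}(3)) that each of $\bar w,\tilde w$ satisfies $W(t,x;z)\ge W(0,x;z)+\int_0^t\big[(\mathcal{K}_0W)(s,x,z)+(-1+C+a(s,x,z))W(s,x;z)\big]\,ds$ with $-1+C+a(\cdot,\cdot,\cdot)\ge 0$ once $C$ is taken large (here $\mathcal{K}_0$ is the convolution operator from the proof of Proposition \ref{sub-solution-prop1}(3)); then, using positivity of $\mathcal{K}_0$, transfer this inequality to $w=\min\{\bar w,\tilde w\}$ and conclude $w\ge 0$ by the argument of \cite[Proposition 2.1(1)]{ShZh0}.

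The two features that block a one-line appeal to the clean comparison statements --- and hence constitute the main, if essentially routine, work --- are that $\bar v$ is unbounded (it blows up as $x\cdot\xi\to-\infty$) and that $\bar u$ is only Lipschitz, not $C^1$, in $t$. The unboundedness is harmless for (2) because $\bar u\le u^*$ is bounded and $u(\cdot,\cdot;\bar u(0,\cdot;z),z)$ is bounded, so $w$ is bounded and the integral machinery of Section \ref{subsection-sub-super-solu} applies; the lack of smoothness of the minimum is absorbed by passing from the differential to the integral form of the super-solution inequality, exactly as in Proposition \ref{sub-solution-prop1}(3). I expect the only genuine bookkeeping to be checking that the coefficients $a(s,x,z)$ coming from $f$ in the two pieces of $w$ can be dominated by a single constant $C$ uniformly in $z\in\RR^N$; the computation underlying part (1) is purely mechanical.
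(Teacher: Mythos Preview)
Your argument for part~(1) is correct and is exactly the direct computation the paper defers to \cite[Proposition~3.5]{ShZh1}.

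For part~(2) your approach is correct but considerably more elaborate than the paper's. The paper does not treat $\bar u$ as a single super-solution at all; it simply observes that $\bar u(0,\cdot;z)\le \bar v(0,\cdot;z)$ and $\bar u(0,\cdot;z)\le u^*(0,\cdot+z)$, applies the comparison principle to each inequality separately to obtain
\[
u(t,x;\bar u(0,\cdot;z),z)\le \bar v(t,x;z)\quad\text{and}\quad u(t,x;\bar u(0,\cdot;z),z)\le u^*(t,x+z),
\]
and then takes the minimum. This sidesteps entirely the non-smoothness of $\min\{\bar v,u^*\}$ that you work around with the integral-inequality machinery of Proposition~\ref{sub-solution-prop1}(3). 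Your concern about the unboundedness of $\bar v$ is legitimate, but the paper's route handles it implicitly: the comparison with $u^*$ is between bounded functions, while the comparison with $\bar v$ can be read through the linearization (since $u f(t,x+z,u)\le a_0(t,x+z)u$, the bounded solution $u$ is a sub-solution of the linear equation of which $\bar v$ is an exact solution, and Remark~\ref{weighted-space-rk} covers that case in the weighted space $X(\mu)$). So your argument works, but the paper's two-line version is both shorter and avoids the bookkeeping you anticipate.
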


\begin{proof}
(1) It follows from the similar arguments as in \cite[Proposition 3.5]{ShZh1}.

(2) By comparison principle,
$$
u(t,x;\bar u(0,\cdot;z),z)\le \bar v(t,x;z)
$$
and
$$
u(t,x;\bar u(0,\cdot;z),z)\le  u^*(t,x+z)
$$
for $t\ge 0$.
(2) then follows.
\end{proof}

\begin{proposition}
\label{sup-sub-solution-prop}
 There is a constant $C$ such that for any $0<b\le b_0$ and $d\ge d^*$,
\begin{align}
\label{limit-at-negative-side-eq}
&\inf_{x\cdot\xi\le C,t\ge 0,z\in\RR^N}u(t,x+ct\xi;\bar u(0,\cdot;z),z)\nonumber\\
&\qquad\ge \inf_{x\cdot\xi\le C,t\ge 0,z\in\RR^N}u(t,x+ct\xi;\underline u(0,\cdot;z,d,b),z)\nonumber\\
&\qquad>0.
\end{align}
\end{proposition}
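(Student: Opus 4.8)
The statement has two parts: the ordering of the two infima, and the strict positivity of the smaller one. I would obtain the ordering from the comparison principle applied to the ordered initial data $\underline u(0,\cdot;z,d,b)\le\bar u(0,\cdot;z)$, and the positivity from the sub-solution estimate of Proposition \ref{sub-solution-prop1}(3), which lower-bounds the solution by $\underline u$, which in turn is bounded below by $b\phi_0$ on a strip.

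\textbf{Step 1: the ordering.} First I would check that the initial profiles are ordered, namely $\underline u(0,x;z,d,b)\le\bar u(0,x;z)$ for all $x,z\in\RR^N$, $0<b\le b_0$, $d\ge d^*$. Writing $\bar v(t,x;z)=e^{-\mu(x\cdot\xi-ct)}\phi(t,x+z)$ as in \eqref{u-abovebar-eq}, one has $\underline v(t,x;z,d)=\bar v(t,x;z)-d\,e^{-\mu_1(x\cdot\xi-ct)}\phi_1(t,x+z)\le\bar v(t,x;z)$, while the third line of \eqref{u-underbar-eq1} gives $b\phi_0(t,x+z)\le\bar v(t,x;z)$ on the region $x\cdot\xi\le M+ct$; hence, by \eqref{aux-sub-eq0}, $\underline u(0,\cdot;z,d,b)\le\bar v(0,\cdot;z)$ everywhere. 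It remains to check $\underline u(0,x;z,d,b)\le u^*(0,x+z)$: since $\|\phi_0\|_{\mathcal{X}_p}=1$ one has $b\phi_0\le b$, and (by the smallness of $b_0$ and largeness of $d^*$ built into Propositions \ref{sub-solution-prop1} and \ref{sup-solution-prop1}, using that $\underline v$ is bounded above globally and that $\sup_{t,x,z}\underline v(t,x;z,d)$ decreases in $d$) both $b$ and $\sup_{t,x,z}\underline v(t,x;z,d)$ are $\le\min_{t,x}u^*(t,x)$; hence $\underline u(0,\cdot;z,d,b)\le\min\{\bar v(0,\cdot;z),u^*(0,\cdot+z)\}=\bar u(0,\cdot;z)$. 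Applying the comparison principle for \eqref{main-shifted-eq} (Proposition \ref{comparison-nonlinear-prop}(1)) then gives $u(t,x;\underline u(0,\cdot;z,d,b),z)\le u(t,x;\bar u(0,\cdot;z),z)$ for all $t\ge0$, $x,z\in\RR^N$; taking the infimum of $x\mapsto u(t,x+ct\xi;\cdot,z)$ over $\{t\ge0,\ x\cdot\xi\le C,\ z\in\RR^N\}$ yields the first inequality of the proposition, and this holds for every choice of $C$.

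\textbf{Step 2: positivity.} By Proposition \ref{sub-solution-prop1}(3), $u(t,x;\underline u(0,\cdot;z,d,b),z)\ge\underline u(t,x;z,d,b)$ for $t\ge0$, so it suffices to exhibit a constant $C$ with $\inf\{\underline u(t,x+ct\xi;z,d,b):t\ge0,\ x\cdot\xi\le C,\ z\in\RR^N\}>0$. If $x\cdot\xi\le C\le M$, then $(x+ct\xi)\cdot\xi=x\cdot\xi+ct\le M+ct$, so the first branch of \eqref{aux-sub-eq0} applies and $\underline u(t,x+ct\xi;z,d,b)=\max\{b\phi_0(t,x+ct\xi+z),\underline v(t,x+ct\xi;z,d)\}\ge b\phi_0(t,x+ct\xi+z)\ge b\min_{(t,x)\in\RR\times\RR^N}\phi_0(t,x)>0$, the last inequality because $\phi_0$ is continuous, periodic, and strictly positive. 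Thus any $C$ not exceeding $M$ works.

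\textbf{Main obstacle.} The delicate point is that the proposition demands a single $C$ valid for all $b\in(0,b_0]$ and all $d\ge d^*$, whereas the cut-off $M$ in \eqref{aux-sub-eq0} depends a priori on $b$ and $d$. The resolution is that $M$ is only constrained from below by the location at which $\underline v$ turns positive; because $\mu<\mu_1$ and $\phi,\phi_1$ obey uniform two-sided bounds $0<\phi_{\min}\le\phi,\phi_1\le\phi_{\max}$, this location (hence an admissible lower bound for $M$) depends only on $\mu,\mu_1$ and on $d^*$, not on $b$ nor on the particular $d\ge d^*$. So one may assume $M\ge M_-$ for a fixed $M_-\in\RR$ and take $C:=M_-$. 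Everything else reduces to the comparison principle and the estimates already recorded in Propositions \ref{sub-solution-prop1} and \ref{sup-solution-prop1}, so I expect no further difficulty.
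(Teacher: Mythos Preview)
Your proposal is correct and follows essentially the same route as the paper: the paper also establishes the chain $\underline u\le u(t,\cdot;\underline u(0,\cdot;z,d,b),z)\le u(t,\cdot;\bar u(0,\cdot;z),z)\le\bar u$ by citing \eqref{u-underbar-eq1}, \eqref{aux-sub-eq0}, and Propositions~\ref{sub-solution-prop1}--\ref{sup-solution-prop1}, and then reads off the strict lower bound from $\underline u(t,x+ct\xi;z,d,b)\ge b\phi_0(t,x+ct\xi+z)\ge b\inf\phi_0>0$ on the strip $x\cdot\xi\le M$. Your Step~1 is more explicit than the paper about why $\underline u(0,\cdot)\le\bar u(0,\cdot)$ (in particular the bound $\underline u\le u^*$, which the paper leaves implicit), and the uniformity concern you raise in your ``main obstacle'' is a genuine quantifier issue the paper does not address; your resolution is correct.
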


\begin{proof}
First of all, by \eqref{u-underbar-eq1}, \eqref{aux-sub-eq0}, and Propositions \ref{sub-solution-prop1} and \ref{sup-solution-prop1}, for any $t\ge 0$,
\begin{equation}
\label{bounds-at-both-sides-eq}
\underbar u(t,x;z,d,b)\le u(t,x;\underline u(0,\cdot; z,d,b),z)\le  u(t,x;\bar u(0,\cdot;z),z)\le \bar u(t,x;z).
\end{equation}
Observe that
\begin{align*}
\underline u(t,x+ct\xi;z,d,b)&=\max\{b\phi_0(t,x+ct\xi+z),\underline v(t,x+ct\xi;z,d)\}\,\, {\rm for}\,\, x\cdot\xi\le M\\
&\ge b\phi_0(t,x+ct\xi+z)\,\, {\rm for}\,\, x\cdot\xi\le M\\
&\ge \inf_{t\in\RR,x\in\RR^N} b\phi_0(t,x)\\
&>0.
\end{align*}
This together with \eqref{bounds-at-both-sides-eq} implies \eqref{limit-at-negative-side-eq}.
\end{proof}

\subsection{Traveling Wave Solutions}

In this subsection, we investigate the existence  of traveling wave solutions of \eqref{main-eq} and prove Theorem \ref{existence-thm}.
Throughout this section, we assume (H1)-(H3). For fixed $d\ge d^*$ and $0<b\le b_0$, put  $\underline u(t,x;z)=\underline u(t,x;z,d,b)$.

\begin{lemma}
\label{profile-exist-lm}
Let
$$
u^n (t,x,z)=u(t+nT,x+cnT\xi; \bar u (0,\cdot;z-cnT\xi),z-cnT\xi)
$$
and
$$
u_n(t,x,z)=u(t+nT,x+cnT\xi;\underbar u(0,\cdot;z-cnT\xi),z-cnT\xi).
$$
Then for any given bounded interval $I\subset\RR$, there is $N_0\in\NN$ such that
$u^n(t,x,z)$ is non-increasing in $n$ and $u_n(t,x,z)$ is non-decreasing in $n$ for $n\ge N_0$, $t\in I$, $x\in\RR^N$, $z\in\RR^N$.
\end{lemma}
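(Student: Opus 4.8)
The plan is to deduce the monotonicity in $n$ from the comparison principle together with two structural ingredients: a space‑translation identity for the shifted equation \eqref{main-shifted-eq}, and the fact that the barriers $\bar u$ and $\underline u$ reproduce themselves, up to the traveling shift $cT\xi$, after one time period.

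First I would record the translation identity: for every $w\in\RR^N$,
\[
u(t,x;u_0(\cdot-w),z-w)=u(t,x-w;u_0,z),
\]
which follows from uniqueness of solutions of \eqref{main-shifted-eq} after the substitution $y\mapsto y-w$ (the dispersal term is a genuine convolution, and $f(t,(x-w)+z,\cdot)=f(t,x+(z-w),\cdot)$). Combined with the $T$‑periodicity of $f$ in $t$, this also shows that each $u^n(t,\cdot,z)$ and $u_n(t,\cdot,z)$, viewed as functions of $(t,x)$, solves \eqref{main-shifted-eq} with habitat shift $z$ on $\{t\ge -nT\}$ — the intervals on which the underlying solutions exist, by Proposition \ref{comparison-nonlinear-prop}(3), since $\bar u(0,\cdot;z),\underline u(0,\cdot;z)\in X^+$ and are bounded between $0$ and $\norm{u^*}$ — and it yields the one‑period restart $u(T+s,\cdot;u_0,z)=u(s,\cdot;u(T,\cdot;u_0,z),z)$.

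Next I would verify the barrier identities
\[
\bar u(T,x+cT\xi;z)=\bar u(0,x;z+cT\xi),\qquad \underline u(T,x+cT\xi;z,d,b)=\underline u(0,x;z+cT\xi,d,b),
\]
which are immediate from the explicit formulas for $\bar v,\underline v,\bar u,\underline u$ and the $T$‑periodicity in $t$ of $\phi,\phi_0,\phi_1$ and $u^*$: the factors $e^{-\mu(x\cdot\xi-ct)}$ and $e^{-\mu_1(x\cdot\xi-ct)}$ collapse to $e^{-\mu x\cdot\xi}$ and $e^{-\mu_1 x\cdot\xi}$ when $t=T$ and $x$ is replaced by $x+cT\xi$, while the switching set $\{x\cdot\xi\le M+ct\}$ is preserved under the same substitution. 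Evaluating Proposition \ref{sup-solution-prop1}(2) and Proposition \ref{sub-solution-prop1}(3) at $t=T$ and replacing $x$ by $x+cT\xi$ then gives the one‑period estimates
\[
u(T,x+cT\xi;\bar u(0,\cdot;z),z)\le \bar u(0,x;z+cT\xi),\qquad u(T,x+cT\xi;\underline u(0,\cdot;z),z)\ge \underline u(0,x;z+cT\xi).
\]

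Finally, given the bounded interval $I$, I would fix $N_0\in\NN$ with $N_0T\ge -\inf I$ and, for $n\ge N_0$, set $z_n=z-cnT\xi$. Starting from $u^{n+1}(-N_0T,x,z)=u((n+1-N_0)T,\,x+c(n+1)T\xi;\,\bar u(0,\cdot;z_{n+1}),z_{n+1})$, I would split off one period by the restart, apply the one‑period estimate with $z$ replaced by $z_{n+1}$ (so that $z_{n+1}+cT\xi=z_n$), invoke the comparison principle, and then use the translation identity to reabsorb the $cT\xi$; the chain collapses to $u^{n+1}(-N_0T,\cdot,z)\le u^n(-N_0T,\cdot,z)$. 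Since $u^n(\cdot,\cdot,z)$ and $u^{n+1}(\cdot,\cdot,z)$ solve the same equation \eqref{main-shifted-eq} on $\{t\ge -N_0T\}$, the comparison principle (Proposition \ref{comparison-nonlinear-prop}(1), applied on $[-N_0T,\cdot)$ via the $T$‑periodicity of the equation) propagates this ordering to all $t\ge -N_0T$, in particular to $t\in I$; hence $u^n(t,x,z)$ is non‑increasing in $n$ for $n\ge N_0$, $t\in I$. The statement for $u_n$ is entirely parallel with every inequality reversed and Proposition \ref{sub-solution-prop1}(3) used in place of Proposition \ref{sup-solution-prop1}(2). The only genuinely delicate point is the bookkeeping in this last step — keeping the time shift $nT$, the space shift $cnT\xi$, and the habitat shift $z_n$ aligned through the translation identity and the restart, so that one ends up comparing two honest solutions of the same shifted equation with ordered initial data; the barrier identities and one‑period estimates are the conceptual core, and this is where the choice $c=\lambda_0(\xi,\mu,a_0)/\mu$ built into Propositions \ref{sub-solution-prop1} and \ref{sup-solution-prop1} is used.
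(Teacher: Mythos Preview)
Your proposal is correct and follows essentially the same route as the paper: the barrier reproduction identity $\bar u(T,x+cT\xi;z)=\bar u(0,x;z+cT\xi)$ (and its $\underline u$ analogue), the one-period estimates from Propositions \ref{sub-solution-prop1}(3) and \ref{sup-solution-prop1}(2), the space-translation identity, and the comparison principle are exactly the ingredients the paper uses. The only cosmetic difference is that the paper carries out the chain of (in)equalities directly at a general time $t$ with $t+(n-1)T>0$, obtaining $u^n(t,x,z)\le u^{n-1}(t,x,z)$ in one stroke, whereas you first establish the ordering at the single time $t=-N_0T$ and then propagate forward by comparison; both are valid and the paper's version saves the extra propagation step.
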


\begin{proof}
First, observe that
$$
\bar u(T,x+cT\xi;z-cnT\xi)=\bar u(0,x;z-c(n-1)T\xi)\quad \forall\,\, n\ge 0.
$$
Hence for given $t\in\RR$ and $n\in\NN$ with $t+(n-1)T>0$,
\begin{align*}
&u^n(t,x,z)\\
&=u(t+nT,x+cnT\xi; \bar u (0,\cdot;z-cnT\xi),z-cnT\xi)\\
&=u(t+(n-1)T,x+cnT\xi;u(T,\cdot; \bar u (0,\cdot;z-cnT\xi),z-cnT\xi),z-cnT\xi)\\
&=u(t+(n-1)T,x+c(n-1)T\xi;u(T,\cdot+cT\xi;\bar u (0,\cdot;z-cnT\xi),z-cnT\xi);z-c(n-1)T\xi)\\
&\le u(t+(n-1)T,x+c(n-1)T\xi;\bar u(T,\cdot+cT\xi;z-cnT\xi);z-c(n-1)T\xi)\quad \text{(by Proposition \ref{sup-solution-prop1})}\\
&=u(t+(n-1)T,x+c(n-1)T\xi;\bar u(0,\cdot;z-c(n-1)T\xi),z-c(n-1)T\xi)\\
&=u^{n-1}(t,x,z).
\end{align*}

Similarly, we can prove that for given $t\in\RR$ and $n\in\NN$ with $t+(n-1)T>0$,
$$
u_n(t,x,z)\ge u_{n-1}(t,x,z).
$$
The proposition then follows.
\end{proof}

Let
$$
u^+(t,x,z)=\lim_{n\to\infty}u^n(t,x,z),
$$
$$
u^-(t,x,z)=\lim_{n\to\infty} u_n(t,x,z),
$$
and
$$
\Phi_0^\pm(x,z)=u^\pm(0,x,z).
$$
Then $u^+(t,x,z)$ and $\Phi_0^+(x,z)$ are upper semi-continuous in $t\in\RR$, $(x,z)\in\RR^N\times\RR^N$ and
$u^-(t,x,z)$ and $\Phi_0^-(x,z)$ are lower semi-continuous in $t\in\RR$, $(x,z)\in\RR^N\times\RR^N$.

\begin{lemma}
\label{existence-lm1}
For each $z\in\RR^N$,
$u^\pm(t,x,z)=u(t,x;\Phi_0^\pm(\cdot,z),z)$ for $t\in\RR$ and $x\in\RR^N$ and hence $u^\pm(t,x,z)$ are entire solution of \eqref{main-shifted-eq}.
\end{lemma}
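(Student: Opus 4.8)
The plan is: (i) observe that every $u^n(\cdot,\cdot,z)$ and every $u_n(\cdot,\cdot,z)$ is itself a solution of the space‑shifted equation \eqref{main-shifted-eq} (with shift parameter $z$), on the time interval $[-nT,\infty)$; (ii) pass to the eventually monotone limits $u^\pm$ inside the variation‑of‑constants (Duhamel) formula using dominated convergence; (iii) bootstrap the regularity of the semicontinuous limit back to $C^1$ in $t$, so that $u^\pm$ is a genuine entire solution of \eqref{main-shifted-eq}; and (iv) identify $u^\pm$ with $u(\cdot,\cdot;\Phi_0^\pm(\cdot,z),z)$ on $[0,\infty)$ by uniqueness and extend the identity to all $t$ by the entire‑solution property.

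For (i), fix $z$ and $n$. If $w(s,y)=u(s,y;u_0,z-cnT\xi)$ solves \eqref{main-shifted-eq} with shift $z-cnT\xi$, then $v(t,x):=w(t+nT,x+cnT\xi)$ satisfies, after a change of variables in the convolution (translation invariance of $\mathcal K$) and using the $T$‑periodicity of $f$ in $t$ (so that $f(t+nT,\cdot,\cdot)=f(t,\cdot,\cdot)$),
\[
v_t(t,x)=\int_{\RR^N}k(y-x)v(t,y)\,dy-v(t,x)+v(t,x)f(t,x+z,v(t,x)),\qquad t\ge -nT,
\]
because the spatial shift $cnT\xi$ exactly cancels the change of the shift parameter. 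Taking $u_0=\bar u(0,\cdot;z-cnT\xi)$, resp. $u_0=\underline u(0,\cdot;z-cnT\xi)$, this says that $u^n(\cdot,\cdot,z)$, resp. $u_n(\cdot,\cdot,z)$, solves \eqref{main-shifted-eq} with shift $z$ on $[-nT,\infty)$.

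For (ii)--(iii), fix $z$ and $k\in\NN$. By Lemma \ref{profile-exist-lm} the sequences $u^n(t,x,z)$ and $u_n(t,x,z)$ are eventually monotone in $n$ (non‑increasing, resp. non‑decreasing, on any fixed bounded $t$‑interval), and by \eqref{bounds-at-both-sides-eq} they all lie between $0$ and $\|u^*\|_{\mathcal{X}_p}$, so the limits $u^\pm(t,x,z)$ exist pointwise. Writing \eqref{main-shifted-eq} as $u_t+u=\mathcal K u+uf(t,x+z,u)$ and integrating from the base time $-kT$, every $u^n$ with $n\ge k$ satisfies a Duhamel identity on $[-kT,\infty)$. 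Letting $n\to\infty$ in it: the nonlocal term $\int_{\RR^N}k(y-x)u^n(s,y,z)\,dy\to\int_{\RR^N}k(y-x)u^+(s,y,z)\,dy$ by dominated convergence ($k\in L^1$, the $u^n$ uniformly bounded), the reaction term converges pointwise and stays bounded by continuity of $f$, and a second dominated‑convergence passage in $s$ (dominant $e^{-(t-s)}$ times a constant on the compact interval) yields
\[
u^+(t,x,z)=e^{-(t+kT)}u^+(-kT,x,z)+\int_{-kT}^{t}e^{-(t-s)}\Big[\textstyle\int_{\RR^N}k(y-x)u^+(s,y,z)\,dy-? \Big]\,ds,
\]
where the bracket is $\int_{\RR^N}k(y-x)u^+(s,y,z)\,dy+u^+(s,x,z)f(s,x+z,u^+(s,x,z))$; the analogous identity holds for $u^-$. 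The right‑hand side is Lipschitz in $t$, so $t\mapsto u^\pm(t,x,z)$ is continuous; knowing this, $s\mapsto\int k(y-x)u^\pm(s,y,z)\,dy$ is continuous, hence the whole integrand is continuous in $s$, the right‑hand side is $C^1$ in $t$, and differentiating recovers \eqref{main-shifted-eq}. Since $k$ is arbitrary, $u^\pm(\cdot,\cdot,z)$ solves \eqref{main-shifted-eq} for all $t\in\RR$; restricting to $t\ge0$ and using uniqueness of solutions with initial value $u^\pm(0,\cdot,z)=\Phi_0^\pm(\cdot,z)$ gives $u^\pm(t,x,z)=u(t,x;\Phi_0^\pm(\cdot,z),z)$ there, and the entire‑solution property extends the identity to $t<0$.

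The step I expect to be the main obstacle is (iii): the monotone limit of the continuous functions $u^n$ (resp. $u_n$) is a priori only upper (resp. lower) semicontinuous, so before \eqref{main-shifted-eq} even makes classical sense for $u^\pm$ one must recover continuity, and then $C^1$ regularity in $t$, from the Duhamel identity (which in turn requires justifying the two limit passages and a Fubini step for the double integral). The remaining ingredients — the change of variables in (i), the dominated‑convergence passages, and the uniqueness argument — are routine given the uniform $L^\infty$ bound furnished by \eqref{bounds-at-both-sides-eq}.
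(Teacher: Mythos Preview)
Your proposal is correct and follows essentially the same route as the paper: rewrite $u^n$ (resp.\ $u_n$) as a solution of \eqref{main-shifted-eq} with shift $z$, write the integral/Duhamel identity, and pass to the monotone limit via dominated convergence. The paper is terser: it uses the plain integrated form $u^n(t,x,z)=u^n(0,x,z)+\int_0^t[\cdots]\,d\tau$ rather than the $e^{-(t-s)}$-weighted Duhamel, and it sidesteps your regularity bootstrap (iii) entirely by working in the space $\tilde X$ of bounded measurable functions (Remark~\ref{general-space-rk}), where the integral identity \emph{is} the notion of solution and $\Phi_0^\pm\in\tilde X^+$ suffices as initial data---so once the limiting integral equation holds, $u^\pm(t,\cdot,z)=u(t,\cdot;\Phi_0^\pm(\cdot,z),z)$ by definition, with no separate continuity or $C^1$ step needed.
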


\begin{proof} We prove the case that $u(t,x,z)=u^+(t,x,z)$.
First, note that
\begin{align*}
&u^n(t,x,z)\\
&=u(t,x+cnT\xi;u(nT,\cdot;\bar u(0,\cdot;z-cnT\xi),z-cnT\xi),z-cnT\xi)\\
&=u(t,x;u(nT,\cdot+cnT\xi;\bar u(0,\cdot;z-cnT\xi),z-cnT\xi),z)\\
&=u^n(0,x,z)\\
&\quad +\int_0^t \Big[\int_{\RR^N}k(y-x) u^n(\tau,y,z)dy-u^n(\tau,x,z)+u^n(\tau,x,z)f(\tau,x+z,u^n(\tau,x,z))\Big]d\tau
\end{align*}
Then by Lebesgue dominated convergence theorem,
\begin{align*}
u(t,x,z)=&\Phi_0^+(x,z)\\
&+ \int_0^t \Big[\int_{\RR^N}k(y-x) u(\tau,y,z)dy-u(\tau,x,z)+u(\tau,x,z)f(\tau,x+z,u(\tau,x,z))\Big]d\tau.
\end{align*}
This implies  that $u(t,x,z)=u(t,x;\Phi_0^+(\cdot,z),z)$ for all $t\in\RR$ and $x\in\RR^N$ and $u(t,x,z)$ is an entire solution of \eqref{main-shifted-eq}.
\end{proof}

\begin{proof}[Proof of Theorem \ref{existence-thm}]
Let
$$
\Phi^\pm(x,t,z)=u^\pm(t,x+ct\xi,z-ct\xi)
(=u(t,x+ct\xi;\Phi_0^\pm(\cdot,z-ct\xi),z-ct\xi)).
$$
It suffices to prove that $\Phi^\pm(x,t,z)$ generate  traveling wave solutions of \eqref{main-eq} with speed $c$ in the direction of $\xi$ and
$\Phi^+(t,x,z)=\Phi^-(t,x,z)$.

First of all, $u(t,x;\Phi^\pm(\cdot,0,z),z)=\Phi^\pm(x -ct\xi,t,z+ct\xi)$ follows directly from the definition of $\Phi^\pm(x,t,z)$.

Secondly, we prove that
$$\ds\lim_{x \cdot \xi -ct \to \infty} \frac{\Phi^\pm(x  -ct\xi,t,z+ct\xi)}
 {e^{-\mu (x\cdot\xi -ct)}\phi(t,x+z)}=1
 $$
  uniformly in $t\in\RR$ and  $z\in\RR^N$,
  which is equivalent to
  \begin{equation}
  \label{limit-eq1}
  \ds\lim_{x\cdot\xi\to\infty}
\frac{\Phi^\pm(x,t,z)}
{ e^{-\mu x\cdot\xi}\phi(t,x+z)}=1,
\end{equation}
 uniformly in $t\in\RR$ and  $z\in\RR^N$.
Note that
\begin{align}
\label{phi-eq0}
\underline v(t,x;z)&= e^{-\mu (x\cdot\xi -ct)}\phi(t,x+z)-d e^{-\mu_1(x\cdot\xi -ct)}\phi_1(t,x+z)\nonumber \\
&\leq u(t,x;\Phi^\pm(\cdot,0,z),z)\nonumber\\
&=\Phi^\pm(x  -ct\xi,t,z+ct\xi)\nonumber\\
& \leq \bar{v}(t,x;z)\nonumber\\
&=e^{-\mu(x \cdot \xi -ct)}\phi(t,x+z)
\end{align}
 for $t\in\RR$ and $x,z\in\RR^N$.
 \eqref{limit-eq1} then follows from  \eqref{phi-eq0}.

Thirdly, we  prove the periodicity of $\Phi^\pm(x,t,z)$ in $t$ and $x$.
Note  that
\begin{equation}
\label{phi-plus-eq1}
\Phi^+(x,t,z)=\lim_{n\to\infty}u\Big(t+nT,x+cnT\xi+ct\xi;\bar u(0,\cdot;z-cnT\xi-ct\xi),z-cnT\xi-ct\xi\Big)
\end{equation}
\begin{equation}
\label{phi-minus-eq1}
\Phi^-(x,t,z)=\lim_{n\to\infty}u\Big(t+nT,x+cnT\xi+ct\xi;\underline u(0,\cdot;z-cnT\xi-ct\xi),z-cnT\xi-ct\xi\Big).
\end{equation}
By \eqref{phi-plus-eq1}, we have
\begin{align}
\label{phi-plus-eq3}
\Phi^+(x,T,z)&=\lim_{n\to\infty}u\Big((n+1)T,x+c(n+1)T\xi;\bar u(0,\cdot;z-c(n+1)T\xi),z-c(n+1)T\xi\Big)\nonumber\\
&=\lim_{n\to\infty}u\Big(nT,x+cnT\xi;\bar u(0,\cdot;z-cnT\xi),z-cnT\xi\Big)\nonumber\\
&=\Phi^+(x,0,z)
\end{align}
and
\begin{align}
\label{phi-plus-eq4}
&\Phi^+(x,t,z+p_i{\bf e_i})\nonumber\\
&=\lim_{n\to\infty}u\Big(t+nT,x+cnT\xi+ct\xi;\bar u(0,\cdot;z+p_i{\bf e_i}-cnT\xi-ct\xi),z+p_i{\bf e_i}-cnT\xi-ct\xi\Big)\nonumber\\
&=\lim_{n\to\infty}u\Big(t+nT,x+cnT\xi+ct\xi;\bar u(0,\cdot;z-cnT\xi-ct\xi),z-cnT\xi-ct\xi\Big)\nonumber\\
&=\Phi^+(x,t,z).
\end{align}
Moreover, for any $x,x^{'}\in\RR^N$ with $x\cdot\xi=x^{'}\cdot\xi$,
\begin{align}
\label{phi-plus-eq5}
&\Phi^+(x,t,z-x)\nonumber\\
&=\lim_{n\to\infty}u\Big(t+nT,x+cnT\xi+ct\xi;\bar u(0,\cdot;z-x-cnT\xi-ct\xi),z-x-cnT\xi-ct\xi\Big)\nonumber\\
&=\lim_{n\to\infty}u\Big(t+nT,cnT\xi+ct\xi;\bar u(0,\cdot+x;z-x-cnT\xi-ct\xi),z-cnT\xi-ct\xi\Big)\nonumber\\
&=\lim_{n\to\infty}u\Big(t+nT,cnT\xi+ct\xi;\bar u(0,\cdot+x^{'};z-x^{'}-cnT\xi-ct\xi),z-cnT\xi-ct\xi\Big)\nonumber\\
&=\lim_{n\to\infty}u\Big(t+nT,x^{'}+cnT\xi+ct\xi;\bar u(0,\cdot;z-x^{'}-cnT\xi-ct\xi),z-x^{'}-cnT\xi-ct\xi\Big)\nonumber\\
&=\Phi^+(x^{'},t,z-x^{'}).
\end{align}
Similarly, we have
\begin{equation}
\label{phi-minus-eq3}
\Phi^-(x,T,z)=\Phi^-(x,0,z),
\end{equation}
\begin{equation}
\label{phi-minus-eq4}
\Phi^-(x,t,z+p_i{\bf e_i})=\Phi^-(x,t,z)
\end{equation}
and for any $x,x^{'}\in\RR^N$ with $x\cdot\xi=x^{'}\cdot\xi$,
\begin{equation}
\label{phi-minus-eq5}
\Phi^-(x,t,z-x)=\Phi^-(x^{'},t,z-x^{'}).
\end{equation}

We now prove that
\begin{equation}
\label{limit-eq2}
\lim_{x\cdot\xi\to -\infty}(\Phi^\pm(x,t,z)-u^*(t,x+z))=0
\end{equation}
uniformly in $t\in\RR$ and $z\in\RR^N$. Note that there is $N_0\in\NN$ such that for $t\in [0,T]$ and  $n\ge N_0$,
\begin{align}
\label{left-limit-eq1}
u^*(t,x+z)&\ge u\Big(t+nT,x+cnT\xi+ct\xi;\bar u(0,\cdot;z-cnT\xi-ct\xi),z-cnT\xi-ct\xi\Big)\nonumber\\
&\ge\Phi^+(x,t,z)\nonumber\\
&\ge\Phi^-(x,t,z)\nonumber\\
&\ge u\Big(t+nT,x+cnT\xi+ct\xi;\underline u(0,\cdot;z-cnT\xi-ct\xi),z-cnT\xi-ct\xi\Big).
\end{align}
By  Proposition \ref{sup-sub-solution-prop}, there are $\sigma>0$ and $C\in\RR$ such that
\begin{align*}
&u(t+nT,x;\underline u(0,\cdot+cnT\xi+ct\xi;z-cnT\xi-ct\xi),z)\\
&=u\Big(t+nT,x+cnT\xi+ct\xi;\underline u(0,\cdot;z-cnT\xi-ct\xi),z-cnT\xi-ct\xi\Big)\\
&\ge \sigma
\end{align*}
for  $t\in [0,T],\,\, n\ge 1,\,\, x\cdot\xi\le C$. Then by Lemma \ref{main-lm1}, for any $\epsilon>0$ and $c^{'}<0$,  there is $N^*\in\NN$ with $N^*\ge N_0$  such that
\begin{align}
\label{left-limit-eq2}
&u\Big(t+N^*T,x+cN^*T\xi+ct\xi;\underline u(0,\cdot;z-cN^*T\xi-ct\xi),z-cN^*T\xi-ct\xi\Big)\nonumber\\
&u(t+N^*T,x;\underline u(0,\cdot+cN^*T\xi+ct\xi;z-cN^*T\xi-ct\xi),z)\nonumber\\
&\ge u^*(t+N^*T,x+z)-\epsilon
\end{align}
for $t\in [0,T]$ and $x\cdot\xi\le c^{'}(N^*+1)T$.
\eqref{limit-eq2} then follows from \eqref{left-limit-eq1}, \eqref{left-limit-eq2}, and the periodicity of $\Phi^\pm(x,t,z)$ in $t$.

By \eqref{limit-eq1}, \eqref{phi-plus-eq3}-\eqref{limit-eq2}, $\Phi^\pm(x,t,z)$ generate  traveling wave solutions of \eqref{main-eq} in the direction of $\xi$ with speed $c$.

Finally we prove that $\Phi^-(x,t,z)=\Phi(x,t,z)$, which implies that
$\Phi(x,t,z):=\Phi^+(x,t,z)$ is continuous and generates a traveling
wave solution of \eqref{main-eq} in the direction of $\xi$ with
speed $c$. To do so,  let
$$
\rho(t)=\inf\{\ln \alpha\,|\, \alpha\ge 1, \,
\frac{1}{\alpha}\Phi^-(x,t,z)\le\Phi^+(x,t,z)\le
\alpha\Phi^-(x,t,z),\,\, \forall\,\, x,z\in\RR^N\}.
$$
By \eqref{limit-eq1}, $\rho(t)$ is well defined. Moreover, there is $\alpha(t)\ge 1$ such that $\rho(t)=\ln\alpha(t)$ and
$$
\frac{1}{\alpha(t)}\Phi^-(x,t,z)\le\Phi^+(x,t,z)\le
\alpha(t)\Phi^-(x,t,z),\,\, \forall\,\, x,z\in\RR^N.
$$
First, we prove that $\rho(t)$ is non-increasing.  To this end, for given $u_0\in\tilde X$, let $u(t,x;\tilde u_0,z,s)$ be the solution of
\begin{equation}
\label{space-time-shifted-eq}
\frac{\p u}{\p t}=\int_{\RR^N} k(y-x)u(t,y)dy-u(t,x)+u(t,x) f(t+s,x+z,u(t,x)),\quad x\in\RR^N
\end{equation}
with $u(s,x;u_0,z,s)=u_0(x)$.
 For any $t_1<t_2$, we have
 $$
 u(t_2,x;\Phi_0^\pm(\cdot,z),z,0)=u(t_2,x;u(t_1,\cdot;\Phi_0^\pm(\cdot,z),z,0),z,t_1).
 $$
 Suppose that $\alpha_1\ge 1$ is such that $\rho(t_1)=\ln
 \alpha_1$ and
 $$
\frac{1}{\alpha_1}\Phi^-(x,t_1,z)\le \Phi^+(x,t_1,z)\le
\alpha_1\Phi^-(x,t_1,z)\quad \forall\,\, x,z\in\RR^N.
 $$
 Note that
 $$
 u(t_1,x;\Phi_0^\pm(\cdot,z),z,0)=\Phi^\pm(x-ct_1\xi,t_1,z+ct_1\xi).
 $$
Hence
$$
\frac{1}{\alpha_1}u(t_1,x;\Phi_0^-(\cdot,z),z,0)\le
u(t_1,x;\Phi_0^+(\cdot,z),z,0)\le
\alpha_1u(t_1,x;\Phi_0^-(\cdot,z),z,0)\quad \forall\,\,
x,z\in\RR^N.
$$
By  Proposition  \ref{comparison-nonlinear-prop} and (H1),
\begin{align*}
u(t_2,x;u(t_1,\cdot;\Phi_0^+(\cdot,z)z,0),z,t_1)&\le
u(t_2,x;\alpha_1u(t_1,x;\Phi_0^-(\cdot,z),z,0),z,t_1)\\
&\le \alpha_1u(t_2,x;u(t_1,\cdot;\Phi_0^-(\cdot,z),z,0),z,t_1)
\end{align*}
and
\begin{align*}
u(t_2,x;u(t_1,\cdot;\Phi_0^-(\cdot,z),z,0),z,t_1)&\le
u(t_2,x;\alpha_1u(t_1,x;\Phi_0^+(\cdot,z),z,0),z,t_1)\\
&\le \alpha_1u(t_2,x;u(t_1,\cdot;\Phi_0^+(\cdot,z,0),z,0),z,t_1)
\end{align*}
 for all $x,z\in\RR^N$. It then follows that
 $$
\frac{1}{\alpha_1}\Phi^-(x,t_2,z)\le \Phi^+(x,t_2,z)\le
\alpha_1\Phi^-(x,t_2,z)\quad \forall\,\, x,z\in\RR^N
 $$
 and hence
 $$
 \rho(t_2)\le \rho(t_1).
$$

 Next, we prove that $\rho(0)=0$. Assume that $\rho(0)>0$, then
there is $\alpha(0)>1$ such that $\rho(0)=\ln\alpha(0)$ and
$$
\frac{1}{\alpha(0)}\Phi^-(x,0,z)\le\Phi^+(0,x,z)\le \alpha(0)\Phi^-(x,0,z),\,\, \forall\,\, x\in\RR^N.
$$
By
\eqref{limit-eq1},
\begin{equation}
  \label{limit-eq1-1}
  \ds\lim_{x\cdot\xi\to\infty}
\frac{\Phi^\pm(x-ct\xi,t,z+ct\xi)} { e^{-\mu
(x\cdot\xi-ct)}\phi(t,x+z)}=1,
\end{equation}
 uniformly in $0\le t\le T$ and  $z\in\RR^N$. This implies that for
 any $\epsilon>0$ with $\frac{1+\epsilon}{1-\epsilon}<\alpha(0)$,
there is $M_\epsilon>0$ such that
$$
\frac{1-\epsilon}{1+\epsilon} \Phi^-(x-ct\xi,t,z+ct\xi)\le
\Phi^+(x-ct\xi,t,z+ct\xi)\le\frac{1+\epsilon}{1-\epsilon}\Phi^-(x-ct\xi,t,z+ct\xi)
$$
for $x\cdot\xi\ge M_\epsilon$, $0\le t\le T$ and $z\in\RR^N$. Note
that there is $\sigma_\epsilon>0$ such that for $x\cdot\xi\le
M_\epsilon$, $0\le t\le T$, and $z\in\RR^N$, there holds,
$$
\Phi^+(x-ct\xi,t,z+ct\xi)\ge \Phi^-(x-ct\xi,t,z+ct\xi)\ge \sigma_\epsilon.
$$
Let $\tilde u(t,x)=\alpha(0) u(t,x;\Phi_0^-(\cdot,z),z,0)$. Then there is $\delta_\epsilon>0$ such that
\begin{align*}
\tilde u_t&=(\mathcal{K}_0 \tilde u)(t,x)-\tilde u(t,x)+\tilde
u(t,x)f(t,x, u(t,x;\Phi_0^-(\cdot,z),z,0))\\
&\ge (\mathcal{K}_0 \tilde u)(t,x)-\tilde u(t,x)+\tilde
u(t,x)f(t,x, \tilde u(t,x))+\delta_\epsilon
\end{align*}
for $x\cdot\xi\le M_\epsilon$, $0\le t\le T$, and $z\in\RR^N$.
Let $\hat u(t,x)=u(t,x;\alpha(0)\Phi_0^-(\cdot,z),z,0)$. Note that
$$
\hat u_t(t,x)=(\mathcal{K}_0\hat u)(t,x)-\hat u(t,x)+\hat u(t,x)f(t,x,\hat u(t,x))
$$
for all $x\in\RR^N$ and $\tilde u(0,x)=\hat u(0,x)$ for $x\in\RR^N$, $\tilde u(t,x)\ge \hat u(t,x)$ for $x\cdot\xi\le M_\epsilon$.
Let $w(t,x)=\tilde u(t,x)-\hat u(t,x)$. Then
\begin{align*}
w_t(t,x)&\ge (\mathcal{K}_0w)(t,x)-w(t,x)+\tilde u(t,x)f(t,x,\tilde u(t,x))-\hat u(t,x)f(t,x,\hat u(t,x))+\delta_\epsilon\\
&\ge p(t,x)w(t,x)+\delta_\epsilon
\end{align*}
for $x\cdot\xi\le M_\epsilon$, where
$$
p(t,x)=-1+\Big[\tilde u(t,x)f(t,x,\tilde u(t,x))-\hat u(t,x)f(t,x,\hat u(t,x))\Big]/[\tilde u(t,x)-\hat u(t,x)].
$$
It then follows that
$$
w(t,x)\ge \int_0^t e^{\int_s^t (-1+p(\tau,x))d\tau}\delta_\epsilon ds
$$
for $x\cdot\xi\le M_\epsilon$.
This implies that there is $\tilde \delta_\epsilon>0$ such that
$$
\tilde u(T,x)\ge \hat u(T,x)+\tilde \delta_\epsilon
$$
for $x\cdot\xi\le M_\epsilon$. It follows that
$$
u(T,x;\Phi_0^+(\cdot,z),z,0)\le u(T,x;\alpha(0)\Phi_0^-(\cdot,z),z,0)\le \alpha(0)u(T,x;\Phi_0^-(\cdot,z),z,0)+\tilde \delta_\epsilon
$$
for $x\cdot\xi\le M_\epsilon$ and then there is $1\le \alpha_+<\alpha(0)$ such that
$$
u(T,x;\Phi_0^+(\cdot,z),z,0)\le \alpha_+ u(T,x;\Phi_0^-(\cdot,z),z,0)
$$
for $x\cdot\xi\le M_\epsilon$. Note that
$$
u(T,x;\Phi_0^-(\cdot,z),z,0)\le  u(T,x;\Phi_0^+(\cdot,z),z,0)\le \alpha_+ u(T,x;\Phi_0^+(\cdot,z),z,0)
$$
for any $x\in\RR^N$.
We therefore have
$$
\rho(T)\le \max\{\ln \alpha_+,\ln \frac{1+\epsilon}{1-\epsilon}\}<\ln\alpha(0)=\rho(0),
$$
this contradicts to $\rho(T)=\rho(0)$ for $n\ge 1$.
Hence $\rho(nT)=\rho(0)=0$. Then for any $0\le t\le T$,
we must have $0=\rho(0)\ge \rho(t)=\rho(T)=0$. Therefore,
$\rho(t)=0$  for all $t$ and
$$
\Phi^-(x,t,z)=\Phi^+(x,t,z)\quad \forall\,\, x,z\in\RR^N,\,\, t\in\RR.
$$
\end{proof}


\begin{thebibliography}{99}



\bibitem{BaZh} P. Bates and G. Zhao, Existence, uniqueness and stability of the
stationary solution to a nonlocal evolution equation arising in population dispersal, {\it J. Math. Anal. Appl.}
{\bf 332} 92007), pp. 428-440.


\bibitem{BeHaNadin} H. Berestycki, F. Hamel, and G. Nadin, Asymptotic spreading in heterogeneous diffusive excitable media,
{\it J. Func. Anal.} {\bf 255 (9)} (2008), pp. 2146-2189.

\bibitem{BeHaNa1} H. Berestycki, F. Hamel, and N. Nadirashvili, The
speed of propagation for KPP type problems, I - Periodic framework, {\it J. Eur. Math. Soc.} {\bf 7} (2005), pp.
172-213.

\bibitem{BeHaNa2} H. Berestycki, F. Hamel, and N. Nadirashvili, The
speed of propagation for KPP type problems, II - General domains,  {\it J. Amer. Math. Soc.} {\bf  23}  (2010),  no. 1, pp. 1-34


\bibitem{BeHaRo} H. Berestycki, F. Hamel, and L. Roques, Analysis
of periodically fragmented environment model: II - Biological invasions and pulsating traveling fronts, {\it J.
Math. Pures Appl. } {\bf 84} (2005), pp. 1101-1146.

\bibitem{Bur} Reinhard B\"urger, Perturbations of positive
semigroups and applications to population genetics, {\it Math.~Z.} {\bf 197} (1988), pp. 259-272.

\bibitem{ChChRo} E. Chasseigne, M. Chaves, and  J. D. Rossi, Asymptotic behavior for nonlocal diffusion equations,
{\it J. Math. Pures Appl.}, {\bf 86} (2006), pp. 271-291.












 \bibitem{Cov1} J. Coville,
 On uniqueness and monotonicity of solutions
of non-local reaction diffusion equation, {\it Annali di Matematica} {\bf 185(3)} (2006), pp. 461-485

\bibitem{Cov} J. Coville, On a simple criterion for the existence of a principal eigenfunction of some nonlocal operators,
{\it J. Differential Equations} {\bf 249} (2010), pp. 2921-2953.



\bibitem{CoDaMa1} J. Coville, J. D\'avila, and S. Mart\'inez,
Existence and uniqueness of solutions to a nonlocal equation with monostable nonlinearity, {\it SIAM J. Math.
Anal.} {\bf 39} (2008), pp. 1693-1709.

\bibitem{CoDaMa2} J. Coville, J.  D\'avila, and S.  Martínez,
 Pulsating fronts for nonlocal dispersion and KPP nonlinearity,
 {\it  Ann. Inst. H. Poincaré Anal. Non Linéaire}  {\bf 30} (2013), pp. 179-223.



\bibitem{CoDu} J. Coville and L. Dupaigne, Propagation speed of travelling fronts in non local
reaction-diffusion equations, {\it Nonlinear Analysis} {\bf 60} (2005), pp. 797 - 819.




\bibitem{Fif2} P. Fife,
Some nonclassical trends in parabolic and parabolic-like evolutions,  Trends in nonlinear analysis, 153-191, Springer, Berlin, 2003.




\bibitem{Fisher} R. Fisher, The wave of advance of advantageous genes,
{\it Ann. of Eugenics}, {\bf 7}(1937), pp. 335-369.



\bibitem{Fre} M. Freidlin, On wave front propagation in periodic media.
{\it In: Stochastic analysis and applications, ed. M. Pinsky, Advances in probablity and
related topics}, 7:147-166, 1984.

\bibitem{FrGa} M. Freidlin and J. G\" artner, On the propagation of
concentration waves in periodic and ramdom media, {\it Soviet Math. Dokl.}, {\bf 20} (1979), pp. 1282-1286.

\bibitem{GaRo} J. Garc\'ia-Mel\'an  and J. D. Rossi, On the principal eigenvalue
of some nonlocal diffusion problems, {\it J. Differential Equations}, {\bf 246} (2009), pp. 21-38.

\bibitem{GrHiHuMiVi} M. Grinfeld, G. Hines, V. Hutson, K.  Mischaikow,
 and G. T. Vickers,  Non-local dispersal,
 {\it  Differential Integral Equations},  {\bf 18}  (2005),  pp. 1299-1320.






\bibitem{Hen} D. Henry,  Geometric Theory of Semilinear Parabolic
Equations, Lecture Notes in Math. {\bf 840}, Springer-Verlag, Berlin, 1981.



\bibitem{HeShZh} G. Hetzer, W. Shen, and A. Zhang, Effects of spatial variations and dispersal strategies on
principal eigenvalues of dispersal operators and spreading speeds of monostable equations,  {\it Rocky Mountain
Journal of Mathematics}, {\bf 43} (2013), pp. 489-513.



\bibitem{HuSh} J. Huang and W. Shen, Speeds of spread and propagation for KPP models
in time almost and space peirodic media, {\it SIAM J. Appl. Dynam. Syst.}, {\bf 8} (2009), pp. 790-821.







\bibitem{HuMaMiVi} V. Hutson, S. Martinez, K. Mischaikow, and G.T. Vickers, The evolution of dispersal,
{\it J. Math. Biol.} {\bf 47} (2003), pp. 483-517.



\bibitem{KaLoSh} C.-Y. Kao, Y. Lou, and W. Shen, Random dispersal vs non-Local dispersal,
{\it Discrete and Continuous Dynamical Systems},  {\bf  26}  (2010),  no. 2, pp. 551-596


\bibitem{KPP}A. Kolmogorov, I. Petrowsky, and N.Piscunov,
A study of the equation of diffusion with increase in the quantity of matter, and its application to a
biological problem.
 {\it Bjul. Moskovskogo Gos. Univ.}, {\bf 1} (1937), pp. 1-26.


 \bibitem{LiZh} X. Liang and X.-Q. Zhao,
 Asymptotic speeds of spread and traveling waves for monotone semiflows
  with applications,  {\it Comm. Pure Appl. Math.},  {\bf 60}  (2007),  no. 1,
  pp. 1-40.
\bibitem{LiZh1} X. Liang and X.-Q. Zhao, Spreading speeds and traveling waves for abstract
monostable evolution systems, {\it Journal of Functional Analysis},  {\bf 259} (2010), pp. 857-903.








\bibitem{Nad} G. Nadin, Traveling fronts in space-time periodic media,
{\it J. Math. Pures Anal.} {\bf 92} (2009), pp. 232-262.

\bibitem{NaRo} G. Nadin and L.  Rossi,  Propagation phenomena for time heterogeneous KPP reaction-diffusion equations,
{\it  J. Math. Pures Appl.}  (9) {\bf 98} (2012),  pp. 633-653.


\bibitem{NoRuXi} J. Nolen, M. Rudd, and J. Xin, Existence of KPP
fronts in spatially-temporally periodic adevction and variational principle for propagation speeds, {\it
Dynamics of PDE}, {\bf 2} (2005), pp. 1-24.

\bibitem{NoXi1} J. Nolen and J. Xin, Existence of KPP type fronts in
space-time periodic shear flows and a study of minimal speeds based on variational principle, {\it Discrete and
Continuous Dynamical Systems}, {\bf 13} (2005), pp. 1217-1234.



\bibitem{Paz} A.~Pazy, Semigroups of Linear Operators and
Applications to Partial Differential Equations, Springer-Verlag New York Berlin Heidelberg Tokyo, 1983.



\bibitem{RaSh} Nar Rawal and W. Shen, Criteria for the Existence and Lower Bounds of Principal Eigenvalues
of Time Periodic Nonlocal Dispersal Operators and Applications,
{\it Journal of Dynamics and Differential Equations}, {\bf 24}  (2012), pp. 927-954.












\bibitem{She1} W. Shen, Variational principle for spatial spreading
speeds and generalized propgating speeds  in time almost and space periodic KPP models, {\it Trans. Amer. Math.
Soc.}, {\bf 362} (2010), pp. 5125-5168.

\bibitem{She2} W. Shen, Existence of generalized traveling waves in time recurrent and space periodic monostable equations,
{\it  J. Appl. Anal. Comput.}, {\bf  1} (2011), pp. 69-93.




\bibitem{ShZh0} W. Shen and A. Zhang,  Spreading speeds for monostable equations
with nonlocal dispersal in space periodic habitats, {\it Journal of Differential Equations}
{\bf 249} (2010), pp. 747-795.


\bibitem{ShZh1} W. Shen and A. Zhang, Traveling wave solutions of  monostable equations
with nonlocal dispersal in space periodic habitats, {\it Communications on Applied Nonlinear Analysis}, {\bf 19} (2012), pp. 73-101.

 \bibitem{ShZh2} W. Shen and A. Zhang,
  Stationary solutions and spreading speeds of nonlocal monostable equations in space periodic habitats,
  {\it  Proc. Amer. Math. Soc.}, {\bf  140} (2012), pp. 1681-1696.




\bibitem{Wei1} H. F.  Weinberger, Long-time behavior of a class of
biology models, {\it SIAM J. Math. Anal.}, {\bf 13} (1982), pp. 353-396.

\bibitem{Wei2}H. F.  Weinberger, On spreading speeds and traveling
waves for growth and migration models in a periodic habitat, {\it J. Math. Biol.}, {\bf 45} (2002), pp. 511-548.












\end{thebibliography}
\end{document}